\theoremstyle{plain}
\newtheorem{theorem}{Theorem}[section]
\newtheorem*{theorem*}{Theorem}
\newtheorem{corollary}[theorem]{Corollary}
\newtheorem{lemma}[theorem]{Lemma}
\newtheorem{proposition}[theorem]{Proposition}
\theoremstyle{definition}
\newtheorem{remark}[theorem]{Remark}
\newtheorem{example}[theorem]{Example}
\newtheorem{definition}[theorem]{Definition}
\newtheorem{setup}[theorem]{Setup}
\newlist{enuroman}{enumerate}{1}
\setlist[enuroman]{
	label = (\roman*),
	topsep = 0pt,
	labelindent = \parindent}
\newlist{enualph}{enumerate}{1}
\setlist[enualph]{
	label = (\alph*),
	topsep = 0pt,
	labelindent = \parindent}
\newlist{enuarabic}{enumerate}{3}
\setlist[enuarabic]{
	label = (\arabic*),
	topsep = 0pt,
	labelindent = \parindent}
\newcommand{\Z}{\mathbb{Z}}
\newcommand{\cO}{\mathcal{O}}
\newcommand{\cC}{\mathcal{C}}
\newcommand{\cJ}{\mathcal{J}}
\newcommand{\cA}{\mathcal{A}}
\newcommand{\cP}{\mathcal{P}}
\newcommand{\cD}{\mathcal{D}}
\newcommand{\cU}{\mathcal{U}}
\newcommand{\la}{\langle}
\newcommand{\ra}{\rangle}
\newcommand{\sq}{\subseteq}
\newcommand{\ds}{\dots}
\newcommand{\arrow}{arrow}
\newcommand{\F}{\mathbb{F}}
\newcommand{\fp}{\mathfrak{p}}
\newcommand{\fm}{\mathfrak{m}}
\newcommand{\fn}{\mathfrak{n}}
\renewcommand{\t}[1]{\widetilde{#1}}
\newcommand{\ul}[1]{\underline{#1}}
\DeclareMathOperator{\Hom}{Hom}
\DeclareMathOperator{\End}{End}
\DeclareMathOperator{\Spec}{Spec}
\DeclareMathOperator{\id}{id}
\DeclareMathOperator{\Ass}{Ass}
\DeclareMathOperator{\Supp}{Supp}
\DeclareMathOperator{\mmod}{-mod}
\DeclareMathOperator{\mumod}{-umod}
\DeclareMathOperator{\mfgumod}{-fgumod}
\DeclareMathOperator{\Ten}{Ten}
\renewcommand{\phi}{\varphi}
\DeclareMathAlphabet{\pazocal}{OMS}{zplm}{m}{n}
\newcommand\blfootnote[1]{%
  \begingroup
  \renewcommand\thefootnote{}\footnote{#1}%
  \addtocounter{footnote}{-1}%
  \endgroup
}
\newcommand{\m}{\text{-}}
\DeclareMathOperator{\divmod}{divmod}
\begin{document}

\title{A formalism of $F$-modules for rings with \\ complete local finite $F$-representation type}
\author{Eamon Quinlan-Gallego}
\date{}

\maketitle

\blfootnote{This work was supported by the National Science Foundation under award numbers 1840190 and 2203065.}
\vspace{-30pt}

\begin{abstract}
	We develop a formalism of unit $F$-modules in the style of Lyubeznik and Emerton-Kisin for rings which have finite $F$-representation type after localization and completion at every prime ideal. As applications, we show that if $R$ is such a ring then the iterated local cohomology modules $H^{n_1}_{I_1} \circ \cdots \circ H^{n_s}_{I_s}(R)$ have finitely many associated primes, and that all local cohomology modules $H^n_I(R / gR)$ have closed support when $g$ is a nonzerodivisor on $R$.
\end{abstract}

\section{Introduction}

Let $R$ be a commutative Noetherian ring, $I \sq R$ be an ideal, and $n \geq 0$ be an integer. The local cohomology modules $H^n_I(R)$ are in general not finitely generated, but nonetheless they occasionally exhibit remarkable finiteness properties. In particular, the following two questions have stimulated a lot of work in commutative algebra over the last 30 years. 

\begin{enuarabic}
	\item When is the set of associated primes of $H^n_I(R)$ finite?
	\item When is the support of $H^n_I(R)$ closed? 
\end{enuarabic}

When $R$ is regular (1) is known to hold in great generality; for example, when $R$ contains a field of characteristic $p > 0$ \cite{Huneke-Sharp}, when $R$ is local and contains a field of characteristic zero \cite{Lyu93} 
\cite{Lyubeznik-charfree}, when $R$ is local and unramified of mixed characteristic \cite{Lyubeznik-unramified} \cite{BBLSZ}, and when $R$ is a smooth $\Z$-algebra \cite{BBLSZ}. There are also partial results in the regular ramified case \cite{NB13-2}. 

Question (1) is also known to hold for some classes of singular rings, namely direct summands \cite{NB12} \cite{AMHNB} and rings with finite $F$-representation type \cite{TT08} \cite{HNB17} \cite{DQ20}. There are also results in small dimensions \cite{Marley01} \cite{Robbins12}, for polynomial rings over sufficiently nice bases \cite{NB13} \cite{Robbins14} \cite{Robbins16}, and under strong restrictions on $I$ and $n$ \cite{BF00}, \cite{BRS00}, \cite{DQ20}. On the other hand, there are examples of rings (even with mild singularities) for which (1) does not hold \cite{Singh-counterexample}, \cite{Katzman-counterexample}, \cite{SS-counterexample}.

Note that (2) holds if and only if the module $H^n_I(R)$ has a finite set of {\em minimal} associated primes; therefore (2) holds whenever (1) holds and, in particular, in all the cases described above. It is also known that (2) holds whenever $R = S / gS$ where $S$ is regular and of characteristic $p > 0$ \cite{HNB17} \cite{KZ18} \cite{DQ20}. There are also some results under assumptions on $I$ and $n$ \cite{RS05} \cite{Katzman06} \cite{HKM09}. See \cite{Khashyarmanesh10} and \cite{Lewis22} for more on this direction. Note that there are currently no examples for which (2) does not hold.

In this paper we are interested in the context of positive characteristic, where the Frobenius endomorphism provides an effective tool for tackling these questions. When $R$ is a regular ring of characteristic $p > 0$, $I, I_1, \dots, I_s \sq R$ are ideals, $n, n_1, \dots, n_s \geq 0$ are integers, and $f \in R$ is a nonzerodivisor, we have:
\begin{enualph}
	\item The module $H^n_I(R)$ has a finite number of associated primes \cite{Huneke-Sharp}.
	\item More generally, the module $H^{n_1}_{I_1} \circ \cdots \circ H^{n_s}_{I_s}(R)$ has a finite number of associated primes \cite{Lyubeznik-Fmod}.
	\item The module $H^n_I(R / fR)$ has closed support \cite{HNB17} \cite{KZ18}. 
\end{enualph}
If $R$ is not regular, but instead satisfies the condition of having {\em finite $F$-representation type} (abbreviated FFRT) \cite{SVdB97} (see Section \ref{scn-FFRT}), statement (a) is known to remain true \cite{TT08} \cite{HNB17} \cite{DQ20}, and it is therefore natural to ask whether (b) and (c) remain true in this setting. A major obstacle in this direction is the lack of a well behaved notion of $F$-module in this context, which constitutes the main tool to prove statements (b) and (c) in the regular case. 

In this paper we develop a formalism of $F$-modules that applies to rings with FFRT, and in fact to the wider class of rings that have FFRT after localizing and completion at every prime ideal. We say that such rings have complete local finite $F$-representation type, or CL-FFRT for short. We use this formalism to prove statements (b) and (c) in this setting (and hence also (a)).

After some background in Section \ref{scn-background}, in Section \ref{scn-FFRT} we give some preliminary results on the FFRT and CL-FFRT properties. 

In Section \ref{scn-Frob-desc} we show that rings with CL-FFRT satisfy a weak version (Theorem \ref{thm-Frob-desc}) of the Frobenius descent theorem that holds for regular rings (see \cite{AMBL}). Although this result is not strictly necessary for our applications to local cohomology, some of the techniques used to prove it are. 

In Section \ref{scn-F-mod} we show how, given a ring $R$ with CL-FFRT, one can (after some choices) associate to it a noncommutative ring $\cD^{(0)} \la \Phi \ra$ (which plays the role of $R \la F \ra$ in the regular setting). We then define a special class of left $\cD^{(0)} \la \Phi \ra$-modules known as {\em finitely generated unit}, and we consider the full subcategory that they span in $\cD^{(0)} \la \Phi \ra \mmod$. We then show:
\begin{theorem*}
	\ 
\begin{enuroman}
\item {\upshape (Cor. \ref{cor-fgu-abelian})} The category of finitely generated unit left $\cD^{(0)} \la \Phi \ra$-modules is closed under kernels, cokernels and extensions. This category also satisfies the ascending chain condition.

\item {\upshape (Prop. \ref{prop-R-is-fgu})} The ring $R$ itself is a left $\cD^{(0)} \la \Phi \ra$-module, and as such it is finitely generated unit.

\item {\upshape (Prop. \ref{prop-localization-fg}, \ref{prop-localization-single-elt-unit})} If $M$ is a finitely generated unit left $\cD^{(0)} \la \Phi \ra$-module and $f \in R$ is an element, then $M[1/f]$ is also a finitely generated unit left $\cD^{(0)} \la \Phi \ra$-module.
\end{enuroman}
\end{theorem*}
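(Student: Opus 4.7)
The plan is to adapt the Lyubeznik--Emerton--Kisin strategy from the regular setting, in which a finitely generated unit $\cD^{(0)}\la\Phi\ra$-module $M$ is presented as the direct limit of iterated $\Phi^*$-pullbacks of a finitely generated $\cD^{(0)}$-submodule $M_0$ (a \emph{generator}). All three parts flow from this generator formalism, so the preliminary step is to set up the correspondence between finitely generated unit modules and a suitable category of generators up to direct limit, together with a dictionary translating morphisms of unit modules into morphisms of generators (after possibly enlarging the target).

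For part (i), given a morphism $\varphi : M \to N$ between finitely generated unit modules, I would first choose compatible generators $M_0 \sq M$ and $N_0 \sq N$ with $\varphi(M_0) \sq N_0$ (achieved by enlarging $N_0$). The kernel and cokernel of $\varphi$ can then be identified with the direct limits of the kernels and cokernels of the induced maps $\Phi^{*i}M_0 \to \Phi^{*i}N_0$. The key input is that these finitely generated $\cD^{(0)}$-modules behave noetherianly, which is exactly what the CL-FFRT hypothesis is designed to guarantee after passing to completions at maximal ideals. For extensions, I would lift a generator of $M''$ into $M$ and combine with a generator of $M'$, then propagate via the unit structure. The ACC statement reduces to ACC on generators, again a consequence of the same noetherian property.

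For part (ii), the Frobenius endomorphism of $R$ gives $R$ a canonical $\Phi$-action, and the tautological identification plays the role of the structural isomorphism making $R$ a unit module. Since $R$ is a finitely generated $\cD^{(0)}$-module (indeed, $\cD^{(0)} \supseteq R$ by construction), one can take $R$ itself as a generator, placing $R$ in the finitely generated unit category.

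For part (iii), the plan is to realize $M[1/f]$ as the direct limit of a system $M \to M \to M \to \cds$ whose transition maps encode multiplication by a suitable power of $f$ twisted by $\Phi$, chosen so that the colimit agrees with the usual localization at $f$. Because each term is the same finitely generated unit module, the colimit inherits an explicit unit structure, and a generator for $M[1/f]$ is produced from the image of a generator of $M$ after one transition map (to absorb the denominator). The most delicate step across the theorem is the cokernel case of (i): showing that the colimit describing the cokernel is itself finitely generated unit requires controlling how $\Phi^*$ interacts with finitely generated $\cD^{(0)}$-modules, and this is where the Frobenius descent results of Section \ref{scn-Frob-desc} and the full strength of the CL-FFRT hypothesis should enter.
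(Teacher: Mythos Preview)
Your overall strategy is close to the paper's, but there are a few places where the plan is imprecise or misidentifies the difficulty.

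\textbf{On part (i).} The paper does not argue via direct limits of $\Phi^{*i}M_0\to\Phi^{*i}N_0$. Instead it first proves that the \emph{unit} subcategory is closed under kernels, cokernels, and extensions by a one-line five-lemma argument: $\Phi^*$ is an equivalence of categories (Theorem~\ref{thm-Frob-equivalence}), hence exact, so in the commuting ladder with rows $0\to\Phi^*N\to\Phi^*M\to\Phi^*Q\to 0$ and $0\to N\to M\to Q\to 0$, any two vertical isomorphisms force the third. Finite generation is then handled separately: cokernels and extensions are trivial, and the \emph{kernel} case is the delicate one, not the cokernel. The paper establishes a root-to-submodule correspondence (Proposition~\ref{prop-unit-sub-root-sub-correspondence}) showing that if $M_0$ is a root of $M$ and $N\subseteq M$ is unit, then $N\cap M_0$ is a root of $N$; this uses that $\Phi^*$ commutes with finite intersections (Lemma~\ref{lemma-Phi*-commutes-intersection}), again a consequence of exactness. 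Your direct-limit-of-kernels plan would work too, but only because $\Phi^{*i}(\ker(M_0\to N_0))=\ker(\Phi^{*i}M_0\to\Phi^{*i}N_0)$, which again needs exactness of $\Phi^*$. So the real key input in both approaches is Theorem~\ref{thm-Frob-equivalence}, not a noetherian property obtained by passing to completions; you should name that explicitly. ACC then follows exactly as you say, via noetherianity of $\cD^{(0)}$ and the root correspondence.

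\textbf{On part (ii).} This is where your sketch is too quick. Calling the structural isomorphism ``tautological'' is accurate in the regular case ($F^*R\cong R$), but here one must show that
\[
\Hom_R(F^{(0)}_*R,F^{(1)}_*R)\otimes_{\cD^{(0)}}F^{(0)}_*R\longrightarrow F^{(1)}_*R
\]
is an isomorphism, and there is nothing tautological about this. The paper checks it after localization and completion, reducing to the case where $F^{(0)}_*R$ and $F^{(1)}_*R$ have the same indecomposable summands, and then applies the splitting-based Lemma~\ref{lemma-compo-isom} (with $M_1=R$, $M_2=F^{(0)}_*R$, $M_3=F^{(1)}_*R$). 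This is exactly where the CL-FFRT hypothesis enters for (ii), and your proposal does not indicate this step.

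\textbf{On part (iii).} The paper's argument is more direct than your colimit description: if $M_0$ is a root of $M$, then $f^{-p^a}M_0\subseteq M[1/f]$ is verified by hand to be a root of $M[1/f]$. Your colimit picture $M\to M\to\cdots$ is the standard alternative and would also work, but note the transition maps are pure multiplication by powers of $f$ (no twist by $\Phi$ in the maps themselves); the $\Phi$-structure on the colimit comes from compatibility of $\psi_M$ with these maps.
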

In Section \ref{scn-app-to-local-coh} we apply these results to local cohomology. Statement (b) is then proved by following Lyubeznik's strategy for regular rings; this gives:
\begin{theorem*}[{Cor. \ref{cor-iterated-finite-ass}}]
	Let $R$ be a ring with {\upshape CL-FFRT}, let $I_1, \dots, I_s \sq R$ be ideals, and $n_1, \dots , n_s \geq 0$ be integers. Then the iterated local cohomology module $H^{i_1}_{I_1} \circ \cdots \circ H^{i_s}_{I_s}(R)$ has finitely many associated primes.
\end{theorem*}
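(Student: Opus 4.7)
The plan is to follow Lyubeznik's strategy from the regular setting, which splits cleanly into two parts: first, show that the iterated local cohomology module $H^{n_1}_{I_1} \circ \cdots \circ H^{n_s}_{I_s}(R)$ lies in the category of finitely generated unit left $\cD^{(0)}\la\Phi\ra$-modules; second, show that every object of this category has finitely many associated primes.

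For the first part I would induct on $s$. The base case $s = 0$ is Prop. \ref{prop-R-is-fgu}. For the inductive step, suppose $M := H^{n_2}_{I_2} \circ \cdots \circ H^{n_s}_{I_s}(R)$ is already known to be finitely generated unit, and write $I_1 = (f_1, \dots, f_r)$. Compute $H^{n_1}_{I_1}(M)$ as cohomology of the \v{C}ech complex on $f_1, \dots, f_r$, whose terms are finite direct sums of iterated localizations of the form $M[1/f_{i_1}\cdots f_{i_k}]$. Repeated application of Prop. \ref{prop-localization-single-elt-unit} shows that each such localization is finitely generated unit; finite direct sums stay in the category as iterated extensions. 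Since the category is closed under kernels and cokernels by Cor. \ref{cor-fgu-abelian}, the cohomology $H^{n_1}_{I_1}(M)$ is again finitely generated unit, completing the induction.

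For the second part I would establish a structural description of finitely generated unit modules along Lyubeznik's lines: every such $M$ should arise, through its unit structure, as the colimit of an ascending chain $N \sq \Phi(N) \sq \Phi^2(N) \sq \cdots$ inside $M$, where $N$ is a finitely generated $R$-submodule (a ``root'' of $M$). Granted such a description together with a control statement $\Ass_R \Phi^k(N) \sq \Ass_R N$ for all $k$, one concludes $\Ass_R M \sq \Ass_R N$, which is finite because $N$ is finitely generated over the noetherian ring $R$.

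The main obstacle is precisely the control statement for associated primes under iterates of $\Phi$. In the regular case this rests on flatness of the Frobenius, whereas here the CL-FFRT hypothesis only guarantees good behavior after localizing and completing at each prime. My plan is to test $\fp \in \Ass_R M$ one prime at a time, pass to $\hat{R}_\fp$ (where FFRT applies), and there exploit the decomposition of Frobenius pushforwards into finitely many isomorphism classes of finitely generated modules to check that no new associated primes appear under the transitions $\Phi^k(N) \raw \Phi^{k+1}(N)$. The ACC from Cor. \ref{cor-fgu-abelian} should enter at this stage, stabilizing the filtrations produced by these local decompositions so that the finite list of primes of $N$ indeed captures every prime of $M$.
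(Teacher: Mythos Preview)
Your first step matches the paper exactly (this is Prop.~\ref{prop-localCoh-F-mod} and Cor.~\ref{cor-iterated-localCoh-F-mod}).

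Your second step takes a genuinely different route from the paper, and the strategy is in fact salvageable --- but the justification you give for the key control statement is off, and the invocation of ACC is misplaced. The inclusion $\Ass_R \Phi^{e*}(M_0) \sq \Ass_R(M_0)$ does hold, but it has nothing to do with FFRT decompositions after completion, and ACC plays no role. It follows directly from Theorem~\ref{thm-Frob-equivalence}: since $\Phi^{e*}$ is an equivalence it is exact, reflects the zero object, and commutes with localization at any $f \in R$ (the left and right $R$-actions on $\Hom_R(F^{(0)}_*R,F^{(e)}_*R)$ agree by $R$-linearity); hence it commutes with $H^0_\fp$ via the \v Cech description, and one gets $H^0_{\fp R_\fp}((M_0)_\fp) \neq 0 \iff H^0_{\fp R_\fp}((\Phi^{e*}M_0)_\fp) \neq 0$, i.e.\ $\Ass_R(M_0) = \Ass_R(\Phi^{e*}M_0)$. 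Then $\Ass_R(M) \sq \bigcup_e \Ass_R(\Phi^{e*}M_0) = \Ass_R(M_0)$, which is finite. Your plan to ``stabilize filtrations'' via Cor.~\ref{cor-fgu-abelian} cannot work as stated: the ACC there is for chains of \emph{unit} $\cD^{(0)}\la\Phi\ra$-submodules, whereas the chain $M_0 \sq \Phi^*(M_0) \sq \cdots$ consists only of $\cD^{(0)}$-submodules and never stabilizes (its union is all of $M$).

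The paper instead runs Lyubeznik's filtration argument, which is where the ACC is genuinely used. Starting from $M_0 = 0$, one picks a maximal associated prime $\fp_{i+1}$ of $M/M_i$ and sets $M_{i+1}/M_i := H^0_{\fp_{i+1}}(M/M_i)$; this is a nonzero finitely generated unit submodule (by Prop.~\ref{prop-localCoh-F-mod}) whose only associated prime is $\fp_{i+1}$. The resulting strictly ascending chain $0 = M_0 \subsetneq M_1 \subsetneq \cdots$ lives in $\cD^{(0)}\la\Phi\ra\mfgumod$ and must terminate by Cor.~\ref{cor-fgu-abelian}, giving $\Ass_R(M) \sq \{\fp_1,\dots,\fp_n\}$. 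Your root-based approach is closer in spirit to Huneke--Sharp, while the paper's is Lyubeznik's; both work here, but the paper's argument uses the noetherianity of the category directly and avoids any analysis of how $\Phi^*$ interacts with $\Ass_R$.
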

We note that one can also obtain the above theorem with the additional assumptions that $R$ is FFRT, graded and strongly $F$-regular by using a recent machinery of holonomic $\cD$-modules \cite{AMHJNBTW21}.

Statement (c) is then proved by translating the strategy used by Hochster-N\'u\~nez-Betancourt \cite{HNB17} in the regular case (with some technical modifications), giving: 
\begin{theorem*} [{Thm. \ref{thm-closed-supp}}]
	Let $R$ be a ring with {\upshape CL-FFRT}, let $g \in R$ be a nonzerodivisor and $I \sq R$ be an ideal. Then the local cohomology module $H^i_I(R / gR)$ has closed support.
\end{theorem*}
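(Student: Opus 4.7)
Following the strategy of Hochster--N\'u\~nez-Betancourt \cite{HNB17}, I would start from the long exact sequence in local cohomology induced by $0 \to R \xrightarrow{g} R \to R/gR \to 0$, which collapses into the short exact sequence
\[
0 \to M/gM \to H^i_I(R/gR) \to \Ann_N(g) \to 0,
\]
where $M := H^i_I(R)$ and $N := H^{i+1}_I(R)$. Since the support of an extension is contained in the union of the supports of the outer terms, it suffices to show that each of $M/gM$ and $\Ann_N(g)$ has closed support. Computing local cohomology via the \v{C}ech complex and iterating the closure properties (i)--(iii) of the preceding theorem, both $M$ and $N$ are finitely generated unit $\cD^{(0)}\la\Phi\ra$-modules.

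For $\Ann_N(g)$, its support agrees with that of $\Ann_N(g^\infty) = \ker(N \to N[1/g])$: any nonzero stalk element killed by $g^n$ admits a nonzero multiple killed by $g$. By (iii) and (i), this kernel is finitely generated unit, hence has finitely many associated primes (the general statement behind Cor.~\ref{cor-iterated-finite-ass}), and therefore has closed support.

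For $M/gM$, let $T := \Ann_M(g^\infty)$, let $\tilde M := M/T$ be the image of $M \to M[1/g]$, and let $Q := M[1/g]/\tilde M$. All four of $T,\tilde M, M[1/g], Q$ are finitely generated unit by (i) and (iii). The $\Tor$ long exact sequence for $0 \to \tilde M \to M[1/g] \to Q \to 0$ collapses, since $g$ acts invertibly on $M[1/g]$, to give $\tilde M/g\tilde M \cong \Ann_Q(g)$, whose support is closed by the previous paragraph. Applying the same functor to $0 \to T \to M \to \tilde M \to 0$ produces
\[
0 \to T/gT \to M/gM \to \tilde M/g\tilde M \to 0,
\]
reducing us to the claim that $T/gT$ has closed support.

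\textbf{Main obstacle.} The last reduction is the crux, because $gT$ is $\Phi$-stable in $T$ (as $\Phi(gt) = g^p\Phi(t) \in gT$) but not $\cD^{(0)}$-stable, so $T/gT$ carries only an $R\la\Phi\ra$-structure and the closure properties of the finitely generated unit category do not directly apply. The plan is to prove a Frobenius--Nakayama principle: if $L$ is an $R\la\Phi\ra$-module that is the $R$-linear span of the $\Phi$-iterates of a finitely generated submodule $L_0$, then $\Supp L = \Supp L_0$, which is closed. Indeed, if $(L_0)_\fp = 0$ then, by finite generation of $L_0$, there is a common $s \notin \fp$ with $sL_0 = 0$, and Frobenius-semilinearity gives $s^{p^e}\Phi^e(L_0) = \Phi^e(sL_0) = 0$, so $\Phi^e(L_0)_\fp = 0$ for all $e$ and hence $L_\fp = 0$. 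To apply this to $T/gT$ one must exhibit a finitely generated $R$-submodule of $T/gT$ whose $\Phi$-iterates span; I would do this by taking a finite $\cD^{(0)}\la\Phi\ra$-generating set of $T$ and using the commutation relations of $\cD^{(0)}$ with $\Phi$ developed in Section~\ref{scn-F-mod} to rearrange every element so that all $\cD^{(0)}$-operators act before any $\Phi$-iterates, producing after descent to $T/gT$ a finitely generated $R$-submodule with the required property. Making this rearrangement precise while keeping the resulting $R$-submodule finitely generated is the technical modification over the regular-case argument, and is the step I expect to be hardest.
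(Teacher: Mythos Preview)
Your overall strategy matches the paper's: use the long exact sequence to reduce to showing that $M/gM$ and the image $Q\subseteq N$ have closed support, and handle $Q$ via finiteness of associated primes. Your Frobenius--Nakayama principle is also correct and is exactly Lemma~\ref{lemma-fgF-closed-supp}: any finitely generated left $\cD^{(0)}\la\Phi\ra$-module (with no unit hypothesis) has closed support, because $\cD^{(0)}\la\Phi\ra\otimes_R M_0\twoheadrightarrow M$ forces $\Supp_R(M)=\Supp_R(M_0)$.

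The gap is in your treatment of $M/gM$. You correctly note that $gM$ is not $\cD^{(0)}$-stable, but the remedy is not a rearrangement argument; it is simply to replace $g$ by $g^{p^a}$. Every homogeneous piece $\Hom_R(F^{(0)}_*R,F^{(e)}_*R)=\Hom_R(F^a_*R,F^{a+eb}_*R)$ of $\cD^{(0)}\la\Phi\ra$ consists of $R$-linear maps between Frobenius pushforwards at level $\geq a$, and such maps are automatically $R^{p^a}$-linear. Hence $F^{(0)}(g)\cdot M=g^{p^a}M$ \emph{is} a $\cD^{(0)}\la\Phi\ra$-submodule, so $M/g^{p^a}M$ is a finitely generated $\cD^{(0)}\la\Phi\ra$-module and Lemma~\ref{lemma-fgF-closed-supp} applies directly. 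Since $g$ is nilpotent on $M/g^{p^a}M$, one has $\Supp_R(M/gM)=\Supp_R(M/g^{p^a}M)$ (Lemma~\ref{lemma-nilpt-support}), finishing the argument. Your detour through $T=\Ann_M(g^\infty)$, $\tilde M$, and $Q$ is unnecessary, and the proposed ``rearrangement so that all $\cD^{(0)}$-operators act before any $\Phi$-iterates'' is problematic as stated: in this formalism there is no single operator $\Phi$, only the bimodule $\Hom_R(F^{(0)}_*R,F^{(1)}_*R)$, and there is no commutation rule of the shape $\Phi\cdot\delta=\delta'\cdot\Phi$ that would let you push $\cD^{(0)}$-operators past it while keeping a fixed finitely generated $R$-submodule. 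The $g^{p^a}$ trick sidesteps this entirely.
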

\begin{remark}
	Note that Bhatt and Lurie have developed two notions of $F$-modules that also apply to singular rings $R$ of characteristic $p > 0$ \cite{BL17}, and provide coherent counterparts to constructible \'etale $\F_p$-sheaves on $R$ through Riemann-Hilbert correspondences. The first such notion is that of holonomic $R[F]$-modules; these are required to be perfect, and local cohomology modules do not satisfy this property in general. The second notion is that of derived finitely generated unit complexes; to work with this notion one must pass to the derived category. Note that these two notions are interchanged through a duality functor. It would be interesting to know how the Bhatt-Lurie theory compares to what is developed here. 
\end{remark}

\begin{remark}
	In the regular case, Huneke-Sharp also showed that local cohomology modules has finite Bass numbers in characteristic $p > 0$ \cite{Huneke-Sharp}, which was later generalized by Lyubeznik to the case of iterated local cohomology by using $F$-modules \cite{Lyubeznik-Fmod}. It is hence reasonable to expect that one could give a proof in the CL-FFRT case. Note, however, that a famous example of Hartshorne gives a ring which has FFRT but has a local cohomology module with infinite socle \cite{Hartshorne-inf-socle} (see also \cite{MV-inf-socle}). 
\end{remark}

\paragraph*{Acknowledgements}

I would like to thank Monica Lewis and Anurag Singh for many stimulating conversations on this topic. Thanks as well to Josep \`Alvarez-Montaner, Bhargav Bhatt, Devlin Mallory, Luis N\'u\~nez-Betancourt and Ryo Takahashi for many useful comments on an earlier version of this manuscript. Special thanks to Devlin Mallory for providing Example \ref{ex-devlin} and allowing me to include it here.

\section{Background} \label{scn-background}

\subsection{The Frobenius endomorphism}

Let $p$ be a prime number and $R$ be a commutative ring of characteristic $p > 0$. Then the function $F: R \to R$ given by $F(x) = x^p$ for all $x \in R$ is additive, i.e., satisfies $F(x + y) = F(x) + F(y)$ for all $x, y \in R$. Since it is clearly multiplicative and maps 1 to 1, $F$ is a ring homomorphism. This ring homomorphism $F$ is called the Frobenius endomorphism of $R$. Given an integer $e \geq 0$ we denote by $F^e$ the $e$-th iterate of Frobenius, which is given by $F^e(x) = x^{p^e}$ for all $x \in R$; in particular, $F^0$ is the identity map on $R$. Since $F^e: R \to R$ is a ring homomorphism, its image is a subring of $R$, which we denote by $R^{p^e} := \{x^{p^e} \ | \ x \in R \}$.

Associated to any ring homomorphism one always has a restriction of scalars functor, and the same is true for $F$. However, the fact that $F$ has the same target and source can make this very confusing, and special notation is introduced to deal with this situation. Here we will follow the notation from algebraic geometry, but other notations are also common in the literature; most notably some authors write $M^{1/p^e}$ for what we denote $F^e_* M$.

Given an $R$-module $M$ and an integer $e \geq 0$ we denote by $F^e_* M$ the $R$-module obtained via restriction of scalars via $F^e: R \to R$. In particular, $F^e_* M$ is equal to $M$ as an abelian group. Given an element $u \in M$ we will usually write $F^e_* u$ when we want to emphasize that we view $u$ as an element of $F^e_* M$ and not as an element of $M$. With this notation the $R$-module structure of $F^e_* M$ is given by 
$$x F^e_* u = F^e_* (x^{p^e} u)$$
for all $x \in R$ and $u \in M$. 

\begin{remark} \label{rmk-F_*-notation}
	Here we are making a slight departure from standard terminology in that, for us, $F^e_* M = M$ as sets, whereas most authors would define $F^e_* M$ as the set $\{F^e_* u \ | \ u \in M \}$. Our definition has the pleasant consequence that, given integers $i, j \geq 0$ the set $\Hom_R(F^i_* M, F^j_* M)$ is actually the {\em subset} of $\Hom_\Z(M, M)$ given by
	$$\Hom_R(F^i_* M, F^j_* M) = \{\phi \in \Hom_\Z(M,M) \ | \ \phi(x^{p^j} u) = x^{p^i} \phi(u) \text{ for all } x \in R, u \in M \},$$
	whereas in the other terminology $\Hom_R(F^i_*M , F^j_* M)$ is only naturally identified with this set. This means, for example, that given $\alpha \in \Hom_R(F^i_* M, F^j_* M)$ and $\beta \in \Hom_R(F^a_* M, F^b_* M)$ the composition $\beta \circ \alpha$ is well defined as an element of $\End_\Z(M, M)$; one can then think about what the linearity of $\beta \circ \alpha$ is if necessary. 
\end{remark}

\begin{definition}
	Let $R$ be a Noetherian ring of characteristic $p > 0$. We say that $R$ is $F$-finite whenever the Frobenius endomorphism $F: R \to R$ is module-finite, i.e., whenever $F_* R$ is a finitely generated $R$-module.
\end{definition}

Recall that a composition of module-finite ring homomorphisms is again module-finite. In particular, if $R$ is $F$-finite then $F^e_* R$ is a finitely generated $R$-module for every $e \geq 0$.

Every perfect field is $F$-finite, and the collection of $F$-finite rings is closed under adjoining variables, taking quotients, localization, and completion. This means that, at least when working over a perfect field, most rings that one encounters in algebraic geometry are $F$-finite.  

\subsection{Differential operators in positive characteristic} \label{subscn-diff-ops}

In this subsection we go through Grothendieck's construction of the ring of differential operators, and we give an alternative description of this ring due to Yekutieli in the case of positive characteristic.

Let $k$ be a commutative ring and $R$ be a commutative $k$-algebra. Then the module $\End_k(R)$ has a natural $R \otimes_k R$-module structure, given by
$$\big( (r \otimes s) \phi \big) (x) = r \phi(sx)$$
for all $r, s, x \in R$ and $\phi \in \End_k(R)$. 

Note that there is a natural ``multiplication map" $R \otimes_k R \to R$ given by $[r \otimes s \mapsto rs]$. We let $J_{R|k} \sq R \otimes_k R$ be the kernel of this map. Note that one can give explicit generators for $J_{R | k}$ as
\begin{align*}
	J_{R | k}  	& = (1 \otimes r - r \otimes 1 \ | \ r \in R) \\
		& = (1 \otimes x_i - x_i \otimes 1 \ | \ i \in I),
\end{align*}
where, in the last equality, $\{x_i : i \in I \} \sq R$ are a choice of $k$-algebra generators for $R$. 

\begin{definition}[{\cite[Ch. 16]{EGAIV}}]
	Let $k$ be a commutative ring and $R$ be a commutative $k$-algebra. The ring of $k$-linear differential operators on $R$ is given by
	$$\cD_{R | k} := \{ \phi \in \End_k(R) \ | \ J^n_{R | k} \cdot \phi = 0 \text{ for some } n \gg 0 \}.$$
\end{definition}

The multiplication on $\cD_{R | k}$ is given by composition, i.e., $\cD_{R | k}$ is a subring of $\End_k(R)$. Note that, although it is true, it is not clear from the definition that $\cD_{R | k}$ should be closed under composition.

Suppose now that $R$ is an $F$-finite ring of characteristic $p > 0$ and $k \sq R$ be a perfect field (e.g. one can always take $k = \F_p$). In this situation the ring $D_{R|k}$ of $k$-linear differential operators on $R$ admits another nice description due to Yekutieli \cite{Yek}, which we now explain.

\begin{lemma} \label{lemma-J-cofinal}
	Let $R$ be an $F$-finite ring and $k \sq R$ be a perfect field. The families of ideals $\{J_{R | k}^n \}_{n = 0}^\infty$ and $\{J_{R | k}^{[p^e]}\}_{e = 0}^\infty$ are cofinal.
\end{lemma}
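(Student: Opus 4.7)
The plan is to prove the two halves of cofinality separately. One direction, $J_{R|k}^{[p^e]} \subseteq J_{R|k}^{p^e}$ for every $e$, is immediate since each generator $x^{p^e}$ of $J_{R|k}^{[p^e]}$ (with $x \in J_{R|k}$) is a product of $p^e$ elements of $J_{R|k}$. The real content is therefore to produce, for each $e$, an integer $n$ with $J_{R|k}^n \subseteq J_{R|k}^{[p^e]}$.

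For this harder direction, the key move I would use is to identify the quotient ring $(R \otimes_k R)/J_{R|k}^{[p^e]}$ with $R \otimes_{R^{p^e}} R$. Since the characteristic is $p$, we have $(1 \otimes r - r \otimes 1)^{p^e} = 1 \otimes r^{p^e} - r^{p^e} \otimes 1$, so $J_{R|k}^{[p^e]}$ is generated by the relations $s \otimes 1 = 1 \otimes s$ for $s \in R^{p^e}$. Because $k$ is perfect, $k = k^{p^e} \subseteq R^{p^e}$, and imposing exactly these relations on the $k$-tensor product is by definition the $R^{p^e}$-tensor product. Under this identification, the image $\bar J$ of $J_{R|k}$ becomes the kernel of the multiplication map $R \otimes_{R^{p^e}} R \to R$, and it will suffice to show that $\bar J$ is nilpotent of bounded exponent.

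For this I would invoke $F$-finiteness: combined with perfectness of $k$, it gives that $R$ is a finitely generated $R^{p^e}$-module, say with generators $a_1, \dots, a_m$. Writing any $r \in R$ as $\sum_j s_j a_j$ with $s_j \in R^{p^e}$ and pushing the $s_j$ across the tensor sign, I would verify that the ideal $\bar J$ is generated by the $m$ elements $u_i := a_i \otimes 1 - 1 \otimes a_i$. The crucial computation is then $u_i^{p^e} = a_i^{p^e} \otimes 1 - 1 \otimes a_i^{p^e} = 0$, since $a_i^{p^e} \in R^{p^e}$. A standard pigeonhole argument on monomials in the $u_i$ then forces $\bar J^{m(p^e-1)+1} = 0$, yielding $J_{R|k}^{m(p^e-1)+1} \subseteq J_{R|k}^{[p^e]}$, as required.

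I do not foresee any real obstacle: the entire argument is driven by the base-change identification $(R \otimes_k R)/J_{R|k}^{[p^e]} \cong R \otimes_{R^{p^e}} R$, where the two hypotheses enter in a clean division of labor — perfectness of $k$ for the identification itself, and $F$-finiteness for the finiteness of the generating set $\{u_i\}$ needed to convert ``every generator is $p^e$-nilpotent'' into ``the ideal is nilpotent.''
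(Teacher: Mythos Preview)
Your proposal is correct and follows essentially the same route as the paper: identify $(R\otimes_k R)/J_{R|k}^{[p^e]}$ with $R\otimes_{R^{p^e}}R$, observe that the image of $J_{R|k}$ is generated by finitely many elements $a_i\otimes 1-1\otimes a_i$ each of which has $p^e$-th power zero, and apply pigeonhole to get $J_{R|k}^{m(p^e-1)+1}\subseteq J_{R|k}^{[p^e]}$. The only cosmetic differences are that the paper fixes a single set of $R^p$-module generators $x_1,\dots,x_s$ (which then serve as $R^{p^e}$-\emph{algebra} generators for every $e$, making $s$ independent of $e$), and that perfectness of $k$ is not actually needed for the finite generation of $R$ over $R^{p^e}$ --- that follows from $F$-finiteness alone; perfectness is used only for the base-change identification.
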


Before we give the proof of Lemma \ref{lemma-J-cofinal}, we note that the statement is clear whenever the ideal $J_{R | k}$ is finitely generated. This happens, for example, when $R$ is essentially of finite type over $k$, since in this case the algebra $R \otimes_k R$ is also essentially of finite type over $k$, and therefore Noetherian. However, we observe that it is not true in general that $R \otimes_k R$ is Noetherian whenever $R$ is Noetherian: borrowing the example of Smith and Van den Bergh \cite{SVdB97}, if $R = k(x_1, x_2, \ds )$ is a rational function field in infinitely many variables then $R \otimes_k R$ is not Noetherian.

\begin{proof}
	For clarity let $J := J_{R|k}$. The inclusion $J^{[p^e]} \sq J^{p^e}$ is clear. Since $R$ is $F$-finite, $R$ is finitely generated as a $R^p$-module, and we fix $R^p$-module generators $x_1, \ds, x_s \in R$. We note that the elements $x_1, \ds, x_s$ also generate $R$ as an algebra (although no longer as a module) over $R^{p^e}$ for every $e \geq 0$. 
	
	We claim that, given $e$, we have an inclusion $J^{[p^e]} \supseteq J^{s(p^e - 1) + 1}$. For this consider the natural algebra homomorphism
	$$R \otimes_k R \to (R \otimes_k R) / J^{[p^e]} = R \otimes_{R^{p^e}} R.$$
	
	This map sends the ideal $J$ into the ideal $\t J = J_{R|R^{p^e}}$, and $\t J$ is generated by the elements $1 \otimes x_1 - x_1 \otimes 1, \dots , 1 \otimes x_s - x_s \otimes 1$. In particular, $\t J^{s(p^e - 1) + 1} \sq \t J^{[p^e]} = 0$. We conclude that the ideal $J^{s(p^e - 1) + 1}$ maps to zero and therefore $J^{s(p^e - 1) + 1} \sq J^{[p^e]}$ as claimed. 
\end{proof}

\begin{proposition} \label{prop-level-filtration}
	Let $R$ be an $F$-finite ring and let $k \sq R$ be a perfect field. Then the ring $\cD_{R|k}$ of $k$-linear differential operators on $R$ is given by
	$$\cD_{R|k} = \bigcup_{e = 0}^\infty \End_{R} (F^e_* R).$$
\end{proposition}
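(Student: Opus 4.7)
The plan is to rewrite the membership condition $J_{R|k}^n \cdot \phi = 0$ for $\phi \in \End_k(R)$ in terms of a Frobenius-power condition on $J_{R|k}$, and then apply Lemma \ref{lemma-J-cofinal} to conclude. For brevity write $J := J_{R|k}$.

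First I would observe that for each fixed $e \geq 0$, the condition $J^{[p^e]} \cdot \phi = 0$ is precisely the condition that $\phi$ be $R^{p^e}$-linear. Indeed, $J$ is generated by the elements $1 \otimes r - r \otimes 1$ for $r \in R$, so $J^{[p^e]}$ is generated by the elements $1 \otimes r^{p^e} - r^{p^e} \otimes 1$, and unwinding the $R \otimes_k R$-action on $\End_k(R)$ this says exactly that $\phi(r^{p^e} x) = r^{p^e} \phi(x)$ for all $r,x \in R$. Since (per Remark \ref{rmk-F_*-notation}) the set $\End_R(F^e_* R)$ is literally the subset of $\End_\Z(R)$ consisting of such maps, one obtains the equality
\[
\{\phi \in \End_k(R) : J^{[p^e]} \cdot \phi = 0 \} = \End_R(F^e_* R)
\]
(the inclusion from right to left uses that $k$ is perfect, so $k = k^{p^e} \sq R^{p^e}$, whence $R^{p^e}$-linearity automatically implies $k$-linearity).

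Next, invoking Lemma \ref{lemma-J-cofinal}, the two filtrations $\{J^n\}_{n \geq 0}$ and $\{J^{[p^e]}\}_{e \geq 0}$ are cofinal; hence a $k$-linear endomorphism $\phi$ is killed by some power $J^n$ if and only if it is killed by some $J^{[p^e]}$. Combining this with the identification above gives
\[
\cD_{R|k} = \bigcup_{n \geq 0} \{\phi \in \End_k(R) : J^n \cdot \phi = 0\} = \bigcup_{e \geq 0} \{\phi \in \End_k(R) : J^{[p^e]} \cdot \phi = 0\} = \bigcup_{e \geq 0} \End_R(F^e_* R),
\]
which is the desired equality.

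The main obstacle has already been handled by Lemma \ref{lemma-J-cofinal}, whose proof uses $F$-finiteness in an essential way; beyond that, the only conceptual point is the identification of $R^{p^e}$-linear endomorphisms with endomorphisms annihilated by $J^{[p^e]}$, which is a direct unwinding of definitions. I would take care to flag that perfectness of $k$ is exactly what guarantees that an $R^{p^e}$-linear endomorphism is automatically $k$-linear, so that the union on the right-hand side indeed lands inside $\End_k(R)$ rather than just $\End_\Z(R)$.
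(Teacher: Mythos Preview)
Your proof is correct and follows essentially the same route as the paper: use Lemma~\ref{lemma-J-cofinal} to replace ordinary powers of $J$ by Frobenius powers, then identify $\{\phi : J^{[p^e]}\phi = 0\}$ with $\End_R(F^e_* R)$ by unwinding the $R\otimes_k R$-action. Your explicit remark that perfectness of $k$ is what ensures $R^{p^e}$-linearity implies $k$-linearity is a nice clarification that the paper leaves implicit.
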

\begin{proof}
	Recall that a $k$-linear endomorphism $\phi \in \End_k(R)$ is a differential operator if and only if it is killed by some power of $J = J_{R|k}$. By Lemma \ref{lemma-J-cofinal}, this is equivalent to being killed by some Frobenius power $J^{[p^e]}$. Note that $J^{[p^e]} = (1 \otimes r^{p^e} - r^{p^e} \otimes 1 : r \in R)$, and that $(1 \otimes r^{p^e} - r^{p^e} \otimes 1) \phi = 0$ if and only if $\phi$ commutes with multiplication by $r^{p^e}$.  We conclude that $J^{[p^e]} \phi = 0$ if and only if $\phi$ is $R^{p^e}$-linear, which gives the result. 
\end{proof}

\begin{remark}
	Recall that with our notation one has an honest inclusion $\End_R(F^e_* R) \sq \End_\Z(R)$ as opposed to an injective map (see Remark \ref{rmk-F_*-notation}). Proposition \ref{prop-level-filtration} also tells us that when $R$ is $F$-finite the ring $\cD_{R|k}$ is independent of the choice of perfect ground field $k$. From this point onwards, given an $F$-finite ring $R$ of characteristic $p$ we will denote by $\cD_{R|\F_p}$ simply by $\cD_R$. 
\end{remark}

\begin{definition}
	Let $R$ be an $F$-finite ring and $e \geq 0$ be an integer. The subring $\End_R(F^e_* R) \sq \cD_R$ is called the ring of differential operators of level $e$, and is denoted by $\cD^e_R := \End_R(F^e_* R)$.
\end{definition}

\begin{proposition} \label{prop-De-noetherian}
	Let $R$ be an $F$-finite ring and $e \geq 0$ be an integer. The ring $\cD^e_R$ is left and right Noetherian.
\end{proposition}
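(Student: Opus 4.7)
The plan is to exploit $F$-finiteness to show that $\cD^e_R$ is a module-finite $R$-algebra, and then to invoke the standard fact that any module-finite algebra over a commutative noetherian ring is both left and right noetherian.

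First I would record that, because $R$ is $F$-finite, $F^e_* R$ is a finitely generated $R$-module; fix generators $u_1, \dots, u_n \in F^e_* R$. Any $R$-linear endomorphism $\phi \in \cD^e_R = \End_R(F^e_* R)$ is determined by its values on $u_1, \dots, u_n$, so the evaluation map
\[ \cD^e_R \longrightarrow (F^e_* R)^n, \qquad \phi \longmapsto (\phi(u_1), \dots, \phi(u_n)) \]
is an injective homomorphism of $R$-modules (where the $R$-action on $\cD^e_R$ is by postcomposition with scalar multiplication on $F^e_* R$). Since $R$ is noetherian and $(F^e_* R)^n$ is a finitely generated $R$-module, its $R$-submodule $\cD^e_R$ is finitely generated as an $R$-module as well.

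At this point $\cD^e_R$ is an $R$-algebra which happens to be module-finite over the commutative noetherian ring $R$. For the noetherian conclusion, I would argue directly: any left ideal $\cJ \sq \cD^e_R$ is in particular an $R$-submodule of $\cD^e_R$, and since $\cD^e_R$ is a noetherian $R$-module, $\cJ$ is finitely generated as an $R$-module, hence \emph{a fortiori} as a left $\cD^e_R$-module. The same argument applies to right ideals. Therefore $\cD^e_R$ is left and right noetherian.

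The only subtle point — and the one I would be most careful about — is the $R$-linearity of the evaluation map and the convention for the $R$-action on $\End_R(F^e_* R)$; with the convention made in Remark \ref{rmk-F_*-notation} this is unambiguous, but since $R$ acts on $F^e_* R$ via Frobenius, one must be explicit that $\cD^e_R$ is being regarded as an $R$-module via the twisted action inherited from the target $F^e_* R$ rather than the source. Beyond this bookkeeping step, no further obstacle is expected: everything reduces to the basic noetherian-module fact once finite generation of $F^e_* R$ is in hand.
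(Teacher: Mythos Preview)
Your proof is correct and follows essentially the same route as the paper: use $F$-finiteness to get that $\cD^e_R$ is module-finite over the noetherian ring $R$, then observe that any one-sided ideal is an $R$-submodule and hence finitely generated. Your worry about the $R$-action is unnecessary, since for $\phi \in \End_R(F^e_*R)$ the actions via source and target agree ($r\phi(u) = \phi(ru)$ by $R$-linearity), so the central $R$-algebra structure is unambiguous.
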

\begin{proof}
	Recall that for us an $F$-finite ring is Noetherian by definition. Since $F^e_* R$ is finitely generated over $R$, we get that $\cD^{e}_R := \End_R(F^e_* R)$ is finitely generated as an $R$-module. Any left ideal $I \sq \cD^{e}_R$ is also an $R$-submodule, and hence $I$ must be finitely generated over $R$, hence finitely generated over $\cD^e_R$. The same is true for a right ideal.
\end{proof}

\subsection{Bimodules} \label{subscn-bimodules}

Suppose $A, B, C$ are rings (not necessarily commutative), that $H$ is an $(A, B)$-bimodule and that $K$ is a $(B, C)$-bimodule. One can then form the tensor product $H \otimes_B C$ by tensoring the right $B$-module structure of $H$ with the left $B$-module structure of $C$. This tensor product has a natural $(A, C)$-bimodule structure, but no natural $B$-module structure in any sense. In particular, when $H$ is a right $B$-module and $K$ is a left $B$-module (equivalenty, $A = C = \Z$) the tensor product $H \otimes_B K$ only has the structure of an abelian group.

As a special case, consider the tensor $H \otimes_A H$ of an $(A, A)$-bimodule $H$. Here it is important to remember that $H \otimes_A H$ is obtained by tensoring the right $A$-module structure of the left copy of $H$ with the left $A$-module structure of the right copy of $H$. The resulting $(A, A)$-bimodule structure is, on the left, given by left multiplication on the left copy of $H$ and, on the right, by right multiplication on the right copy of $H$. 

Since $H \otimes_A H$ is a new $(A, A)$-bimodule, one can iterate this construction to form, for every integer $e \geq 0$, the $e$-fold tensor product
$$H^{\otimes e} := H \otimes_A H \otimes_A \cdots \otimes_A H,$$
where $e$ copies of $H$ appear on the right hand side, with the understanding that $H^{\otimes 0} = A$. The tensor algebra of $H$ is then given by
$$\Ten_A(H) := \bigoplus_{e = 0}^\infty H^{\otimes e}.$$
Note that $\Ten_A(H)$ has a natural ring structure, and that giving a left $\Ten_A(H)$-module $M$ is equivalent to giving a left $A$-module together with an $A$-module map $H \otimes_A M \to M$.

\section{Finite $F$-representation type} \label{scn-FFRT}

\subsection{Complete local case}

Let $R$ be a commutative ring. Choose once and for all a collection $\cJ(R\mmod)$ of finitely generated indecomposable $R$-modules which is exhaustive up to isomorphism, and such that any two distinct elements of $\cJ(R\mmod)$ are not isomorphic. This means that given a finitely generated indecomposable $R$-module $M$ there is a unique $N \in \cJ(R\mmod)$ such that $M \cong N$. We can construct $\cJ(R\mmod)$, for example, by considering the set of all quotients of $R^{\oplus n}$, taking a union over all $n$, and then picking a representative of each isomorphism class. 

Suppose now furthermore that $R$ is Noetherian, local and complete, so that the Krull-Schmidt theorem holds (see \cite[Ch. 1]{LeuschkeWiegand-CM}). This says that given a finitely generated $R$-module $M$ there exist distinct indecomposable $R$-modules $N_1, \dots , N_s \in \cJ(R\mmod)$, integers $a_1, a_2, \dots, a_s$, and an isomorphism
$$M \cong N_1^{\oplus a_1} \oplus N_2^{\oplus a_2} \oplus \cdots N_s^{\oplus a_s},$$
and that the $N_i$ and $a_i$ are uniquely determined by $M$ up to permutation. In this situation, we let $\cJ(M) \sq \cJ(R\mmod)$ denote the subset
$$\cJ(M) = \{ N_1, N_2, \dots , N_s\}.$$
\begin{definition} [\cite{SVdB97}] \label{def-FFRT-completelocal}
	Let $(R, \fm)$ be a complete local $F$-finite ring. We say that $(R, \fm)$ has finite $F$-representation type (FFRT for short) if the set $\bigcup_{e = 0}^\infty \cJ(F^e_* R)$ is finite.
\end{definition}

In other words, $R$ has FFRT whenever there is a finite collection $\{M_1, \dots , M_s \} \sq \cJ(R\mmod)$ such that, for all integers $e \geq 0$, we have an $R$-module isomorphism of the form
$$F^e_* R \cong M_1^{\oplus n_{1,e}} \oplus \cdots \oplus M_s^{\oplus n_{s,e}}.$$
Given a fixed $e \geq 0$, it is not true in general that every $M_i$ occurs as an irreducible component of $F^e_* R$. Indeed, we have
$$\cJ(F^e_* R) = \{ M_i \ | \  n_{i,e} > 0 \}.$$


\begin{remark} \label{rmk-FFRT-verybasics}
	Let $(R, \fm)$ be a complete local $F$-finite ring, and let $M_1, \dots , M_t$ be finitely generated $R$-modules. For each $i = 1, \dots, t$, take a decomposition of $M_i$ into irreducibles. The direct sum of these decompositions gives a decomposition of $M_1 \oplus \cdots \oplus M_t$ into irreducibles, and we conclude that 
$$\cJ(M_1 \oplus \cdots \oplus M_t) = \cJ(M_1) \cup \cdots \cup \cJ(M_t).$$
\end{remark}

\begin{remark} \label{rmk-FFRT-basics}
	Let $(R, \fm)$, $(S, \fn)$ be complete local $F$-finite rings, let $M$ be a finitely generated $R$-module, and let $G: R\mmod \to S\mmod$ be an additive functor. Suppose $\cJ(M) = \{N_1, \dots , N_s\}$, and take a decomposition $M \cong N_1^{\oplus a_1} \oplus \cdots \oplus N_s^{\oplus a_s}$ into irreducible $R$-modules. Because we assume that $G$ is additive, we have $G(M) \cong G(N_1)^{\oplus a_1} \oplus \cdots \oplus G(N_s)^{\oplus a_s}$, and by Remark \ref{rmk-FFRT-verybasics} we conclude that 
$$\cJ \big(G(M) \big)  = \bigcup_{i = 1}^s \cJ \big( G(N_i) \big).$$
In particular, if $M'$ is another finitely generated $R$-module with $\cJ(M') = \cJ(M)$ then $\cJ(G(M')) = \cJ(G(M))$.  
\end{remark}

%


\begin{proposition} \label{prop-FFRT-iff-FFRTcouple}
	Let $(R, \fm)$ be a complete local $F$-finite ring. Then the following are equivalent:
	\begin{enualph}
		\item The ring $(R, \fm)$ has {\upshape FFRT}.
		\item There exists integers $a \geq 0$ and $b > 0$ such that $\cJ(F^a_* R) = \cJ(F^{a+b}_* R)$. 
	\end{enualph}
	Moreover, whenever (b) occurs we have
	$$\cJ(F^a_* R) = \cJ(F^{a+b}_* R) = \cJ(F^{a + 2b}_* R) = \cdots $$	
\end{proposition}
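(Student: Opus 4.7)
The plan is to prove (a) $\Rightarrow$ (b) by a pigeonhole argument, then use the functoriality of $F^b_*$ together with Lemma \ref{lemma-FFRT-basics} to get the ``moreover'' claim, and finally leverage this periodicity to deduce (b) $\Rightarrow$ (a).

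For (a) $\Rightarrow$ (b), assume $R$ has FFRT and let $\cS := \bigcup_{e \geq 0} \cJ(F^e_* R)$, which is finite by hypothesis. Each $\cJ(F^e_* R)$ is a subset of $\cS$, so there are only finitely many possible values. By pigeonhole, two of them must coincide: $\cJ(F^a_* R) = \cJ(F^{a+b}_* R)$ for some $a \geq 0$, $b > 0$.

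For the ``moreover'' statement, assume $\cJ(F^a_* R) = \cJ(F^{a+b}_* R)$. The functor $F^b_* : R\mmod \to R\mmod$ is additive, so applying Lemma \ref{lemma-FFRT-basics} to the equality of $\cJ$-sets yields $\cJ(F^b_* F^a_* R) = \cJ(F^b_* F^{a+b}_* R)$, i.e. $\cJ(F^{a+b}_* R) = \cJ(F^{a+2b}_* R)$. Iterating, $\cJ(F^{a + kb}_* R) = \cJ(F^a_* R)$ for all $k \geq 0$.

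Finally, to deduce (b) $\Rightarrow$ (a), fix $r$ with $0 \leq r < b$ and apply the additive functor $F^r_*$ to the equality $\cJ(F^{a+kb}_* R) = \cJ(F^a_* R)$. Again by Lemma \ref{lemma-FFRT-basics}, this gives $\cJ(F^{a+kb+r}_* R) = \cJ(F^{a+r}_* R)$ for every $k \geq 0$ and every $0 \leq r < b$. Hence for any $e \geq a$, writing $e - a = kb + r$ with $0 \leq r < b$, we get $\cJ(F^e_* R) = \cJ(F^{a+r}_* R)$. Consequently
$$\bigcup_{e = 0}^{\infty} \cJ(F^e_* R) \;=\; \bigcup_{e = 0}^{a+b-1} \cJ(F^e_* R),$$
which is a finite union of finite sets and hence finite, proving that $R$ has FFRT.

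There is no real obstacle here: the only content beyond pigeonholing is the observation that $F^b_*$ (and $F^r_*$) are additive, so Lemma \ref{lemma-FFRT-basics} turns the equality of $\cJ$-sets at one level into equalities at all higher levels. The mildly delicate point worth stating carefully is that the ``moreover'' claim is really the engine that makes (b) $\Rightarrow$ (a) work, since without it one only knows a single coincidence $\cJ(F^a_* R) = \cJ(F^{a+b}_* R)$ rather than full periodicity.
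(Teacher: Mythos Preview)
Your proof is correct and essentially identical to the paper's: both use pigeonhole on the finitely many subsets of $\bigcup_e \cJ(F^e_* R)$ for (a)$\Rightarrow$(b), apply the additive functor $F^b_*$ together with Lemma~\ref{lemma-FFRT-basics} to get the periodicity, and then use $F^r_*$ for $0 \leq r < b$ to reduce $\bigcup_{e \geq 0} \cJ(F^e_* R)$ to the finite union $\bigcup_{e=0}^{a+b-1} \cJ(F^e_* R)$. The only cosmetic difference is that the paper phrases the pigeonhole step via the power set $\cP$ explicitly.
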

\begin{proof}
  	Suppose that $R$ has FFRT. Let $\cP$ denote the power set of $\bigcup_{e = 0}^\infty \cJ(F^e_* R)$. Since $\cP$ is a finite set, the sequence $(\cJ(F^e_* R))_{e = 0}^\infty \sq \cP$ must eventually have a repetition, and hence there are integers $a \geq 0$ and $b > 0$ such that $\cJ(F^a_* R) = \cJ(F^{a+b}_* R)$. This shows that (a) implies (b).

	Note that, by Remark \ref{rmk-FFRT-basics}, whenever $\cJ(F^a_* R) = \cJ(F^{a+b}_* R)$ for some $a \geq 0$ and $b > 0$, for every $e \geq 1$ we get
	$$\cJ(F^{a+2b}_* R) = \cJ(F^{b}_* F^{a+b}_* R) = \cJ(F^{b}_*  F^a_* R) = F^{a+b}_* R,$$
	and the last statement of the proposition follows by induction.
	
	Let's now prove that (b) implies (a). Suppose that for some integers $a \geq 0$ and $b > 0$ we have $\cJ(F^{a+eb}_* R) = \cJ(F^a_* R)$ for every integer $e \geq 0$. Given $j \geq a + b$, we can find integers $e \geq 1$ and $0 \leq r < b$ such that $j = a + eb + r$. We then have
	$$\cJ(F^j_* R) = \cJ(F^r_* F^{a+eb}_* R) =  \cJ(F^r_* F^a_* R) = \cJ(F^{a+r}_* R),$$
	where the second equality uses Remark \ref{rmk-FFRT-basics}. This proves that
	$$\bigcup_{e  = 0}^{a + b - 1} \cJ(F^e_* R) = \bigcup_{j = 0}^\infty \cJ(F^j_* R).$$
	Since the left-hand side is clearly finite, this shows that $R$ has FFRT. 
\end{proof}

\begin{remark} \label{rmk-FFRT-couple-localization}
Let $(R, \fm)$ be a complete local $F$-finite ring, and suppose that $R$ has {\upshape FFRT} with
$$\cJ(F^a_* R) = \cJ(F^{a+b}_* R) = \cJ(F^{a + 2b}_* R) = \cdots.$$	
Let $\cJ(F^a_* R) = \{N_1, \dots , N_s \}$, so that $\cJ(F^{a+eb}_* R) =  \{N_1, \dots , N_s \}$ for every $e \geq 0$, and fix a prime ideal $\fp \sq R$. By applying Remark \ref{rmk-FFRT-basics} to the functor $G(M) = \widehat{M}_\fp$ we get, for all $e \geq 0$,
$$\cJ(F^{a+eb}_* (\widehat{R}_\fp)) = \bigcup_{i = 1}^s \cJ( \widehat{N}_{i, \fp}) = \cJ(F^a_* \widehat{R}_\fp),$$
and therefore
$$\cJ(F^a_* \widehat{R}_\fp) = \cJ(F^{a+b}_* \widehat{R}_\fp) = \cJ(F^{a + 2b}_* \widehat{R}_\fp) = \cdots$$
In particular, $\widehat{R}_\fp$ has {\upshape FFRT} by Proposition \ref{prop-FFRT-iff-FFRTcouple}. 
\end{remark}

%

\subsection{Global case}

We now introduce a global notion of the FFRT property by requiring that completions at all prime ideals have FFRT. We compare this notion with another notion that is present in the literature. Our main result here will be that the integers $a \geq 0$ and $b > 0$ satisfying the conclusion of Proposition \ref{prop-FFRT-iff-FFRTcouple} can be chosen globally.

\begin{definition} \label{def-FFRT-global}
	Let $R$ be an $F$-finite ring. We say that $R$ has complete local finite $F$-representation type (CL-FFRT for short) whenever for all prime (equivalently, maximal) ideals $\fp \sq R$ the ring $\widehat{R_\fp}$ has FFRT.
\end{definition}

\begin{remark}
	The fact that it suffices to check at maximal ideals follows from Remark \ref{rmk-FFRT-couple-localization}. In particular, if $R$ is local to begin with then $R$ has CL-FFRT if and only if $\widehat{R}$ has FFRT. Notably, for complete local rings the notions of CL-FFRT and FFRT are equivalent. 
\end{remark}

\begin{remark}
	Away from the complete local case, Y. Yao's notion of FFRT is much more common in the literature \cite{Yao-FFRT}. He defines $R$ to have FFRT whenever whenever there exist finitely generated $R$-modules $M_1, \dots M_s$ such that $F^e_* R$ is a direct sum of these modules for every $e \geq 0$. We next show that FFRT in this sense implies CL-FFRT, and we give an example of D. Mallory to illustrate that these two notions are indeed different.
\end{remark}
	
\begin{proposition} \label{prop-weak-vs-strong-FFRT}
	Let $R$ be an $F$-finite ring. Suppose there exist $R$-modules $M_1, M_2, \dots , M_s$ such that for every $e \geq 0$ we have an isomorphism
	$$F^e_* R \cong M_1^{\oplus n_{1, e}} \oplus M_2^{\oplus n_{2, e}} \oplus \cdots \oplus M_s^{\oplus n_{s, e}}$$
	for some integers $n_{i, e} \geq 0$. Then $R$ has {\upshape CL-FFRT}.
\end{proposition}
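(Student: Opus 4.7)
The plan is to verify Definition \ref{def-FFRT-completelocal} for each complete local ring $\widehat{R_\fp}$ by producing, for every prime $\fp \sq R$, a finite subset of $\cJ(\widehat{R_\fp}\mmod)$ that contains all indecomposable summands of $F^e_* \widehat{R_\fp}$ for every $e \geq 0$. The strategy is to transport the given global decomposition $F^e_* R \cong \bigoplus_i M_i^{\oplus n_{i,e}}$ through localization at $\fp$ and through $\fp$-adic completion, and then refine the result using the Krull--Schmidt theorem available on $\widehat{R_\fp}\mmod$. Note that after discarding any $M_i$ with $n_{i,e} = 0$ for all $e$ we may assume each $M_i$ is a direct summand of some $F^e_* R$, hence finitely generated over $R$ by $F$-finiteness.

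First I would record the elementary fact that Frobenius induces the identity map on $\Spec R$, so that inverting a multiplicative set $S \sq R \setminus \fp$ on the $R$-module $F^e_* N$ has the same effect as inverting $\{s^{p^e} : s \in S\}$ on $N$; since both sets have the same saturation (as $\fp$ is prime) one obtains a canonical isomorphism $(F^e_* N)_\fp \cong F^e_* (N_\fp)$ of $R_\fp$-modules for any $R$-module $N$. Applying this to $N = R$ together with the given global decomposition yields
$$F^e_*(R_\fp) \cong \bigoplus_{i = 1}^s (M_i)_\fp^{\oplus n_{i,e}}$$
as $R_\fp$-modules, for every $e \geq 0$.

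Next I would pass to the $\fp$-adic completion. Since $R$ is $F$-finite, each $F^e_* R_\fp$ and each $(M_i)_\fp$ is a finitely generated $R_\fp$-module, so completion is exact on this subcategory and commutes with finite direct sums. Moreover the flat homomorphism $R_\fp \to \widehat{R_\fp}$ is compatible with Frobenius on both sides, which gives a natural isomorphism $F^e_*(R_\fp) \otimes_{R_\fp} \widehat{R_\fp} \cong F^e_*(\widehat{R_\fp})$ of $\widehat{R_\fp}$-modules. Combining these two observations produces, for every $e \geq 0$, an isomorphism
$$F^e_*(\widehat{R_\fp}) \cong \bigoplus_{i = 1}^s \widehat{(M_i)_\fp}^{\oplus n_{i,e}}$$
as $\widehat{R_\fp}$-modules.

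Finally I would invoke Krull--Schmidt. Each $\widehat{(M_i)_\fp}$ is a finitely generated $\widehat{R_\fp}$-module, so it admits a decomposition $\widehat{(M_i)_\fp} \cong \bigoplus_{j = 1}^{t_i} N_{i,j}^{\oplus a_{i,j}}$ with $N_{i,j} \in \cJ(\widehat{R_\fp}\mmod)$ indecomposable. Substituting into the display above and applying Lemma \ref{lemma-FFRT-verybasics} yields
$$\cJ(F^e_* \widehat{R_\fp}) \sq \{N_{i,j} : 1 \leq i \leq s,\ 1 \leq j \leq t_i\},$$
a finite set independent of $e$. This verifies Definition \ref{def-FFRT-completelocal} for $\widehat{R_\fp}$ and completes the proof. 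The only nontrivial point is the compatibility of $F^e_*$ with localization and flat base change in the presence of Frobenius; this is standard bookkeeping with the twisted $R$-module structure on $F^e_* N$, and I expect it to be the main (though minor) technical hurdle.
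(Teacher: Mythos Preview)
Your proof is correct and follows essentially the same approach as the paper's: localize and complete the given decomposition to obtain $F^e_* \widehat{R}_\fp \cong \bigoplus_i \widehat{(M_i)_\fp}^{\,\oplus n_{i,e}}$, then apply Lemma~\ref{lemma-FFRT-verybasics} to conclude that $\cJ(F^e_* \widehat{R}_\fp) \sq \bigcup_i \cJ(\widehat{(M_i)_\fp})$. You supply more detail than the paper does---in particular the reduction to finitely generated $M_i$ and the explicit verification that $F^e_*$ commutes with localization and completion---but the underlying argument is identical.
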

\begin{proof}
Fix a prime ideal $\fp \sq R$ and an integer $e \geq 0$. We then have $F^e_* \widehat{R}_\fp \cong \widehat{M}_{1, \fp}^{\oplus n_{1,e}} \oplus \cdots \oplus \widehat{M}_{s, \fp}^{\oplus n_{s, e}}$. By Remark \ref{rmk-FFRT-verybasics}, we get
$$\cJ (F^e_* \widehat{R}_\fp) \sq \cJ(\widehat{M}_{1, \fp}) \cup \cdots \cup \cJ(\widehat{M}_{s, \fp}).$$
	Since the right hand side is finite and does not depend on $e$ we conclude that $\widehat{R}_\fp$ has FFRT.
\end{proof}

We would like to thank Devlin Mallory for allowing us to include this example of his here.

\begin{example} [D. Mallory] \label{ex-devlin}
	Let $X$ be an smooth ordinary elliptic curve of characteristic $p > 0$, and let $x \in X$ be a closed point; then $U := X \setminus \{x\}$ is affine. It is known that $F^e_* \cO_X$ decomposes as $F^e_* \cO_X = \bigoplus_{i = 0}^{p^e} \mathcal{L_i}$ where the $\mathcal{L}_i$ are pairwise non-isomorphic $p^e$-torsion line bundles; Smith and Van den Bergh use this to show that the homogeneous coordinate ring of $X$ does not have FFRT in the graded sense \cite{SVdB97}. Set $R := \Gamma(U, \cO_X)$; note that $F^e_* R = \bigoplus_{i = 0}^{p^e} \Gamma(U, \mathcal L_i)$, and we claim that the $\Gamma(U, \mathcal L_i)$ are pairwise non-isomorphic. Indeed, if we had $\mathcal L_i |_U \cong  \mathcal L_j |_U$ then $\mathcal L_i \cong \mathcal L_j(n \{x\})$ for some $n \in \Z$. Since both $\mathcal L_i$, $\mathcal L_j$ are torsion they have degree zero, and this forces $n = 0$, and thus $\mathcal L_i \cong \mathcal L_j$, and hence $i = j$. This shows that $R$ does not have FFRT in the sense of Yao despite being regular. However, note that every regular $F$-finite ring is CL-FFRT.
\end{example}

\begin{remark}
	Proposition \ref{prop-weak-vs-strong-FFRT} also shows that if $R = \bigoplus_{n = 0}^\infty R_n$ is finitely generated graded over an $F$-finite field $R_0 = k$, and $R$ is FFRT in the graded sense of Smith and Van den Bergh \cite{SVdB97}, then $R$ has CL-FFRT. 
\end{remark}

For examples of classes of rings which have the FFRT property, we refer the reader to \cite[Ex. 1.3]{TT08}, \cite{Hara15} \cite{Shibuta17} \cite{Alhazmy-Katzman} \cite{RSVdB19} \cite{RSVdB22}. On the other hand, proving that a given ring does {\em not} have the FFRT property is a tough problem, but there are also results in this direction. For example, we know that rings for which some local cohomology module has infinitely many associated primes cannot be FFRT \cite{TT08} \cite{HNB17} \cite{DQ20} (such examples are constructed in  \cite{Katzman-counterexample} \cite{SS-counterexample}). Using other techniques many other classes of rings have been shown to not have the FFRT property; see \cite{Hara-Ohkawa} \cite{Mallory22}.

Suppose that $R$ has CL-FFRT. By Proposition \ref{prop-FFRT-iff-FFRTcouple}, for every prime ideal $\fp \sq R$ there exist integers $a \geq 0$ and $b > 0$ such that
$$\cJ(F^a_* \widehat{R}_\fp) = \cJ(F^{a+b}_* \widehat{R}_\fp) = \cJ(F^{a+2b}_* \widehat R_\fp) = \cdots .$$
It turns out that these integers $a$ and $b$ can be chosen independently of $\fp$, and we prove this in the remainder of this section. We begin with an observation in the complete local case.

\begin{lemma} \label{lemma-irr-subset-equiv}
	Let $(R, \fm)$ be a complete local ring, let $M$ and $N$ be finitely generated $R$-modules, and let $E(M) := \End_R(M)$, $E(N) := \End_R(N)$. Consider the map 
	$$\Psi: \Hom_R(M,N) \otimes_{E(M)} \Hom_R(N,M) \to E(N)$$
	given by $\Psi(\phi_2 \otimes \phi_1) = \phi_2 \circ \phi_1$. The following are equivalent:
	\begin{enualph}
	\item We have $\cJ(N) \sq \cJ(M)$.
	\item The map $\Psi$ is an isomorphism.
	\item The image of $\Psi$ contains $\id_N$.
	\end{enualph}
\end{lemma}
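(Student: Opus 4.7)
\textbf{Proof plan for Lemma \ref{lemma-irr-subset-equiv}.} The plan is to establish the cyclic chain of implications (a) $\Rightarrow$ (b) $\Rightarrow$ (c) $\Rightarrow$ (a), using Krull-Schmidt for the two implications that involve (a) and an explicit construction of an inverse to $\Psi$ for the implication (c) $\Rightarrow$ (b). The implication (b) $\Rightarrow$ (c) is immediate since $\id_N \in E(N) = \im \Psi$.

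For (a) $\Rightarrow$ (b), I would first show that $\cJ(N) \sq \cJ(M)$ implies that $N$ is a direct summand of $M^{\oplus k}$ for some $k$. Writing $N \cong \bigoplus_i N_i^{\oplus a_i}$ with $N_i \in \cJ(N) \sq \cJ(M)$, each $N_i$ appears as a summand of $M$ by Krull-Schmidt, so $N_i^{\oplus a_i}$ is a summand of $M^{\oplus a_i}$; summing gives $N$ as a summand of $M^{\oplus k}$ with $k = \sum a_i$. This produces maps $\iota_j \colon N \to M$ and $\pi_j \colon M \to N$, $j = 1, \dots, k$, with $\sum_{j} \pi_j \circ \iota_j = \id_N$; in particular $\id_N = \Psi\bigl(\sum_j \pi_j \otimes \iota_j\bigr)$, so (c) holds, and as we will see below (c) implies (b).

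For the main step (c) $\Rightarrow$ (b), fix a decomposition $\id_N = \sum_{j=1}^k \pi_j \circ \iota_j$ with $\pi_j \in \Hom_R(M,N)$ and $\iota_j \in \Hom_R(N,M)$. Define
$$\Phi \colon E(N) \to \Hom_R(M,N) \otimes_{E(M)} \Hom_R(N,M), \qquad \Phi(\phi) = \sum_{j=1}^k (\phi \circ \pi_j) \otimes \iota_j.$$
A direct computation shows $\Psi \circ \Phi = \id_{E(N)}$, since $\Psi(\Phi(\phi)) = \sum_j \phi \circ \pi_j \circ \iota_j = \phi \circ \id_N = \phi$. For $\Phi \circ \Psi = \id$, I would evaluate on a simple tensor $\alpha \otimes \beta$ with $\alpha \in \Hom_R(M,N)$ and $\beta \in \Hom_R(N,M)$: since $\beta \circ \pi_j \in E(M)$, one can move it across the tensor over $E(M)$ and then use $\sum_j (\beta \circ \pi_j) \circ \iota_j = \beta \circ \id_N = \beta$ to collapse the sum back to $\alpha \otimes \beta$. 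This establishes that $\Psi$ is an isomorphism.

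Finally, for (c) $\Rightarrow$ (a), the equation $\sum_j \pi_j \circ \iota_j = \id_N$ exhibits $N$ as a direct summand of $M^{\oplus k}$ via $\iota = (\iota_j)_j$ and $\pi = (\pi_j)_j$. By Krull-Schmidt applied to the complete local ring $R$, the indecomposable summands of $M^{\oplus k}$ are exactly those of $M$ (with multiplicities multiplied by $k$), so $\cJ(N) \sq \cJ(M^{\oplus k}) = \cJ(M)$ by Lemma \ref{lemma-FFRT-verybasics}. The main subtle point is the bookkeeping in the verification $\Phi \circ \Psi = \id$, which rests essentially on the fact that $\Psi$ is being computed over $E(M)$ rather than over $R$; this is what allows the factor $\beta \circ \pi_j$ to be transferred across the tensor, and it is really the only non-formal step in the argument.
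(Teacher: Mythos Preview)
Your proof is correct and follows essentially the same approach as the paper: both use Krull--Schmidt to pass between (a) and the existence of a splitting $\id_N = \sum_j \pi_j \circ \iota_j$, and both build an explicit two-sided inverse to $\Psi$ from such a splitting. The only cosmetic differences are that the paper proves (a)$\Rightarrow$(b) directly rather than routing through (c), and that the paper's inverse is $\psi \mapsto \sum_j \pi_j \otimes (\iota_j \circ \psi)$ (composing on the right factor) whereas yours is $\phi \mapsto \sum_j (\phi \circ \pi_j) \otimes \iota_j$ (composing on the left factor); both work for the same reason, namely that the relevant composite lands in $E(M)$ and can be passed across the tensor.
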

\begin{proof}
	If (a) holds there is some integer $n \geq 1$ such that $N$ is a direct summand of $M^{\oplus n}$, and one can find maps $\alpha_i: N \to M$ and $\beta_i: M \to N$ ($i = 1, \dots , n)$ such that $\id_N = \sum_i \beta_i \circ \alpha_i$. One then checks that $[\psi \mapsto \sum_i \beta_i \otimes \alpha_i \circ \psi]$ provides a two-sided inverse for $\Psi$, and hence (b) holds.

	Statement (b) implies (c) trivially. Suppose now that (c) holds, which means that there exist $\beta_i: N \to M$ and $\alpha_i: M \to N$ ($i = 1, \dots , n)$ such that $\sum_i \beta_i \circ \alpha_i = \Psi(\sum_i \beta_i \otimes \alpha_i) = \id_N$. Consider then the maps $\beta: N \to M^{\oplus n}$ and $\alpha: M^{\oplus n} \to N$ given by $\beta = \beta_1 \oplus \cdots \oplus \beta_n$ and $\alpha = \alpha_1 + \cdots + \alpha_n$. We then get $\beta \circ \alpha = \id_N$, and therefore $N$ is a direct summand of $M^{\oplus n}$, giving (a).
\end{proof}

Let $R$ be an $F$-finite ring. Given integers $a \geq 0$ and $b > 0$, we define the subset $\cU_{a,b} \sq \Spec(R)$ by
\begin{align*}
	\cU_{a,b} & := \{ \fp \in \Spec(R) \ | \ \cJ(F^a_* \widehat{R}_\fp) = \cJ(F^{a+b}_* \widehat{R}_\fp)\} \\
		& = \{ \fp \in \Spec(R) \ | \ \cJ(F^a_* \widehat{R}_\fp) = \cJ(F^{a+b}_* \widehat{R}_\fp) = \cJ(F^{a+2b}_* \widehat{R}_\fp) = \cdots \},
\end{align*}
where the second description follows from Proposition \ref{prop-FFRT-iff-FFRTcouple}.

%


\begin{proposition}
	Let $R$ be an $F$-finite ring. Then the locus
	$$\{\fp \in \Spec(R) \ | \ \widehat{R}_\fp \text{ \upshape has FFRT } \}$$
	is open in the Zariski topology.
\end{proposition}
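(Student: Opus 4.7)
The plan is to express the FFRT locus as a union of the open sets $\cU_{a,b}$ that were just introduced, and then invoke Lemma \ref{lemma-U-open}. First, I would observe that by Proposition \ref{prop-FFRT-iff-FFRTcouple}, for a prime $\fp \sq R$ the completion $\widehat{R}_\fp$ has FFRT if and only if there exist integers $a \geq 0$ and $b > 0$ such that $\cJ(F^a_* \widehat{R}_\fp) = \cJ(F^{a+b}_* \widehat{R}_\fp)$. Unwinding the definition of $\cU_{a,b}$, this says that $\fp$ lies in the FFRT locus if and only if $\fp \in \cU_{a,b}$ for some choice of $a$ and $b$. In other words,
\[
\{\fp \in \Spec(R) \ | \ \widehat{R}_\fp \text{ has FFRT}\} = \bigcup_{\substack{a \geq 0 \\ b > 0}} \cU_{a,b}.
\]

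Since each $\cU_{a,b}$ is open in the Zariski topology by Lemma \ref{lemma-U-open}, the right-hand side is open as an (arbitrary) union of open sets, completing the proof. There is no real obstacle here — the content is entirely packaged into Proposition \ref{prop-FFRT-iff-FFRTcouple} (which gives the useful reformulation of FFRT in terms of stabilization of $\cJ(F^e_* \widehat{R}_\fp)$) and Lemma \ref{lemma-U-open} (which exhibits each $\cU_{a,b}$ as the locus where two natural composition maps between finitely generated $R$-modules are isomorphisms, a condition which is visibly Zariski-open). The only thing to check carefully is that the characterization in Proposition \ref{prop-FFRT-iff-FFRTcouple} is indeed ``iff'', which it is by the direction (a)$\Rightarrow$(b) of that proposition.
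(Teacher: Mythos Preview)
Your proof is correct and follows exactly the same approach as the paper: express the FFRT locus as $\bigcup_{a \geq 0, b > 0} \cU_{a,b}$ via Proposition~\ref{prop-FFRT-iff-FFRTcouple}, then apply Lemma~\ref{lemma-U-open}. The paper's proof is just a one-line version of what you wrote.
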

\begin{proof}
	By Proposition \ref{prop-FFRT-iff-FFRTcouple}, the set given equals $\bigcup_{a \geq 0, b > 0} \cU_{a,b}$, so it suffices to show that each $\cU_{a,b}$ is open. For this observe that, by Lemma \ref{lemma-irr-subset-equiv}, $\cU_{a,b}$ is the locus where both morphisms
	\begin{align*}
		\Hom_R(F^a_* R, F^{a+b}_* R) \otimes_{\cD^{a}_R} \Hom_R(F^{a+b}_* R, F^{a}_* R) & \longrightarrow \cD^{a+b}_R \\
		\Hom_R(F^{a+b}_* R, F^{a}_* R) \otimes_{\cD^{a+b}_R} \Hom_R(F^{a}_* R, F^{a+b}_* R) & \longrightarrow \cD^{a}_R
	\end{align*}
	are isomorphisms.
\end{proof}

\begin{remark}
	K. Alhazmy has shown that the locus of $\fp \in \Spec(R)$ for which $R_\fp$ has FFRT in the sense of Yao is also open \cite{Alhazmy-FFRT}.
\end{remark}

\begin{proposition} \label{prop-exists-FFRT-tuple}
	Let $R$ be a ring with {\upshape CL-FFRT}. Then there exist integers $a \geq 0$ and $b > 0$ such that $\cU_{a,b} = \Spec(R)$ or, in other words, such that for all prime ideals $\fp \sq R$ we have
	$$\cJ(F^a_* \widehat{R}_\fp) = \cJ(F^{a+b}_* \widehat{R}_\fp) = \cJ(F^{a+2b}_* \widehat{R}_\fp) = \cdots$$
\end{proposition}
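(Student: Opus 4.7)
The plan is to combine the pointwise FFRT hypothesis with the openness result of Lemma \ref{lemma-U-open} and the quasi-compactness of $\Spec(R)$, and then merge the resulting finite family of exponents into a single pair $(a,b)$ via the containments of Lemma \ref{lemma-U-containment}.

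First, for each prime $\fp \sq R$ the ring $\widehat{R}_\fp$ has FFRT by hypothesis, so Proposition \ref{prop-FFRT-iff-FFRTcouple} supplies integers $a_\fp \geq 0$ and $b_\fp > 0$ with $\fp \in \cU_{a_\fp, b_\fp}$. By Lemma \ref{lemma-U-open} each $\cU_{a_\fp, b_\fp}$ is Zariski open, so $\{\cU_{a_\fp, b_\fp}\}_{\fp \in \Spec(R)}$ is an open cover of $\Spec(R)$. Since $R$ is $F$-finite it is noetherian, hence $\Spec(R)$ is quasi-compact, and we can extract a finite subcover $\cU_{a_1, b_1}, \ldots, \cU_{a_k, b_k}$.

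Next, set $a := \max_i a_i$ and $b := \lcm(b_1, \ldots, b_k)$ (any common positive multiple of the $b_i$ would do). I claim $\cU_{a_i, b_i} \sq \cU_{a, b}$ for every $i$. Indeed, since $a \geq a_i$, part (ii) of Lemma \ref{lemma-U-containment} gives $\cU_{a_i, b_i} \sq \cU_{a, b_i}$, and since $b$ is a positive integer multiple of $b_i$, part (i) of the same lemma gives $\cU_{a, b_i} \sq \cU_{a, b}$. Chaining these inclusions and taking the union over $i$ yields
\[
\Spec(R) \;=\; \bigcup_{i = 1}^{k} \cU_{a_i, b_i} \;\sq\; \cU_{a,b},
\]
which is the desired conclusion.

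No serious obstacle is expected: the argument is a compactness-and-assembly routine once the two ingredients—openness of $\cU_{a,b}$ and the monotonicity of these loci in both indices—are in hand. The only point to be careful about is choosing $b$ so that each $b_i$ divides $b$ (so that Lemma \ref{lemma-U-containment}(i) actually applies), which is why the $\lcm$ (rather than, say, the maximum) is used.
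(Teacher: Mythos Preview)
Your proof is correct and follows essentially the same strategy as the paper: cover $\Spec(R)$ by the open sets $\cU_{a,b}$, pass to a finite subcover by quasicompactness, and use Lemma \ref{lemma-U-containment} to absorb them all into a single $\cU_{a,b}$. The only cosmetic difference is that the paper phrases the last step as showing the family $\{\cU_{a,b}\}$ is directed (using $\cU_{i,j} \cup \cU_{k,l} \sq \cU_{\max\{i,k\},\, jl}$), whereas you directly take $a = \max_i a_i$ and $b = \lcm_i b_i$; these are equivalent.
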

\begin{proof}
	Since $R$ has CL-FFRT, we know that $\Spec(R) = \bigcup_{i \geq 0, j > 0} \cU_{i, j}$. By quasicompactness this open cover has a finite subcover. To finish the proof we show that this union is directed.

We begin by observing that the subsets $\cU_{a,b} \sq \Spec(R)$ defined above satisfy the following containments:
	\begin{enuroman}
	\item We have $\cU_{a, b}  \sq \cU_{a, 2b} \sq \cU_{a, 3b} \sq \cdots$.
	\item We have $\cU_{a,b} \sq \cU_{a+1, b} \sq \cU_{a+2, b} \sq \cdots $. 
	\end{enuroman}
	Statement (i) follows from the description of $\cU_{a,b}$ given above. For (ii), suppose $\fp \in \cU_{a,b}$, i.e., we have $\cJ(F^a_* \widehat R_\fp) = \cJ(F^{a+b}_* \widehat R_\fp)$. By Remark \ref{rmk-FFRT-basics}, we then get $\cJ(F^{a+1}_* \widehat R_\fp) = \cJ(F_* F^a_* \widehat R_\fp) = \cJ(F_* F^{a+b}_* \widehat R_\fp) = \cJ(F^{a+b+1}_* \widehat R_\fp)$.

With these containments we see that, given integers $i, k \geq 0$ and $j, l > 0$, we obtain
	\[\cU_{i,j} \cup \cU_{k,l} \sq \cU_{\max \{ i,k \}, j} \cup \cU_{\max \{i,k\}, l} \sq \cU_{\max \{i,k\}, jl} \qedhere \]
\end{proof}

\section{Frobenius descent} \label{scn-Frob-desc}

Let $R$ be a regular $F$-finite ring and, as usual, we denote by $\mathcal D$ its ring of differential operators. In \cite{Katz70} Katz observed that, given an $R$-module $M$, its Frobenius pullback $F^* M$ is endowed with an integrable connection with vanishing $p$-curvature; in the language of this paper, this means that $F^* M$ has a natural $\mathcal D^1$-module structure. Moreover, Katz showed that the functor $F^*: R\mmod \to \cD^1  \mmod$ is an equivalence of categories.

With our notation and definitions these statements can be proved efficiently. Roughly speaking, we have $F^* M = F_* R \otimes_R M$ and therefore $\cD^1  = \End_R(F_* R)$ acts on $F^*M$ by acting on the left of the tensor. Moreover, since $R$ is regular and $F$-finite by assumption, we know that $F_* R$ is a finitely generated projective $R$-module; the fact that $F^*: R\mmod \to \cD^1  \mmod$ is an equivalence of categories is then a standard fact from Morita theory (or see below, as this section has a self-contained proof).

This point of view also allows for some generalization. Indeed, under the same assumptions we know that $F^e_* R$ is finitely generated projective for every $e \geq 0$, and therefore one gets equivalences of categories $F^{e*}: R \mmod \to \cD^e  \mmod$ for every $e \geq 0$. Moreover, these equivalences fit into a chain
\begin{center}
	\begin{tikzcd}
		R \mmod \arrow[r, "F^{*}"] \arrow[r, "\sim", swap] & \cD^{ 1 }\mmod \arrow[r, "F^{*}"] \arrow[r, "\sim", swap]  & \cD^{ 2 }\mmod \arrow[r, "F^{*}"] \arrow[r, "\sim", swap] & \cdots \arrow[r, "F^{*}"]\arrow[r, "\sim", swap]  &  \cD^{ e }\mmod \arrow[r, "F^{*}"] \arrow[r, "\sim", swap] & \cdots.
	\end{tikzcd}
\end{center}
These facts (informally known as Frobenius descent) prove crucial the study of rings of differential operators for regular rings in positive characteristic; see \cite{Chase-homdim} \cite{SmithSP-diffOps} \cite{SmithSP-gdim} \cite{Haastert-invdir} \cite{Bogvad} \cite{Blickle-thesis} \cite{AMBL}. See also \cite{BerthelotII} \cite{Berthelot-divided} for Frobenius descent in the context of arithmetic $\mathcal D$-modules.

In this section we show that rings with CL-FFRT satisfy a weak version of the Frobenius descent theorem described above. Some of the results in this section follow from standard facts in Morita theory, but we try to provide a full exposition to keep this article self-contained.
\begin{setup} \label{setup-FFRT}
	We fix the following notation: $R$ will be an $F$-finite ring with CL-FFRT. We fix integers $a \geq 0$ and $b > 0$ such that, for every prime ideal $\fp \sq R$, the $\widehat{R_\fp}$-modules $F^a_* \widehat{R_\fp}, F^{a+b}_* \widehat{R_\fp}, F^{a+2b}_* \widehat{R_\fp}, \dots$ have the same indecomposables, i.e.,
	$$\cJ(F^{a}_* \widehat{R_\fp}) = \cJ(F^{a+b}_* \widehat{R_\fp}) = \cJ(F^{a+2b}_* \widehat{R_\fp}) = \cdots$$
	(note that the existence of such integers is guaranteed by Proposition \ref{prop-exists-FFRT-tuple}). With these integers fixed, for every $e \geq 0$ we will denote $\cD^{(e)} := \cD^{a + eb}_R = \End_R(F^{a+eb}_* R)$ and $F^{(e)} := F^{a + eb}$. In particular, if $M$ is an $R$-module then $F^{(e)}_* M = F^{a+eb}_* M$. 
\end{setup}

Note that when $R$ is regular one can pick $a = 0$, $b = 1$ by Kunz's theorem. By making this choice in what follows (i.e., $\cD^{(e)} = \cD^e$ and $F^{(e)} = F^e$), one recovers the theory of Frobenius descent in the regular case as described above.

If $R$ is $F$-split the Frobenius map $F^e_* R \to F^{e+1}_* R$ is a split inclusion, and hence it remains so after localization and completion. We conclude that every indecomposable summand of $F^e_* \widehat{R_\fp}$ appears in $F^{e+1}_* \widehat{R_\fp}$, meaning that when $R$ is $F$-split and CL-FFRT one can always choose $b = 1$. 

\begin{definition} \label{def-FFRT-F-pullback}
	Let $R$ be a ring with CL-FFRT, and fix notation as in Setup \ref{setup-FFRT}. Let $s \geq 0$ be an integer. We define the functor $\Phi^{s*}: \  \cD^{(0)}\mmod \longrightarrow \cD^{(s)}\mmod$ by $\Phi^{s*} := \Hom_R(F^{(0)}_* R, F^{(s)}_* R) \otimes_{\cD^{(0)}} (-)$. When $s = 1$ we write this simply as $\Phi^*$. Note that this depends on the choices of integers $a$ and $b$. 
\end{definition}
We intend for this notation to be suggestive. When $R$ is regular (and $a = 0, b = 1$), the functor $\Phi^{s*}$ is $F^{s*}$, where $F^s: R \to F^s_* R$ is the $s$-times iterated Frobenius morphism (note that $F^*$ is sometimes called the Peskine-Szpiro functor). But, as far as we know, in general there is no morphism $\Phi$ for which $\Phi^{s*}$ is the pullback via $\Phi$. Nonetheless we will soon see that we do have $(\Phi^*)^s = \Phi^{s*}$ in an appropriate sense. 

\begin{lemma} \label{lemma-compo-isom-2}
	Let $i, j, k, m \geq 0$ be integers. We have a $(\cD^{(i)}, \cD^{(i + k)})$-bimodule isomorphism
	$$\Hom_R(F^{(i)}_* R, F^{(j)}_* R) \otimes_{\cD^{(i)}} \Hom_R(F^{(i+k)}_*, F^{(i + m)}_* R) \xrightarrow{\sim} \Hom_R (F^{(i+k)}_* R, F^{(j+m)}_* R)$$
	given by $[\phi_2 \otimes \phi_1 \mapsto \phi_2 \circ \phi_1]$ (see Remark \ref{rmk-F_*-notation}).
\end{lemma}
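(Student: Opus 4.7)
The plan is to (i) check that the composition map is well-defined, (ii) reduce to the complete local FFRT case by faithful flatness, and (iii) prove the isomorphism there by iterated applications of Lemma \ref{lemma-irr-subset-equiv}.

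For (i), I would use the identification from Remark \ref{rmk-F_*-notation} of each $\Hom_R(F^u_* R, F^v_* R)$ as a subset of $\End_\Z(R)$ cut out by a linearity condition, so that composition is literally composition of $\Z$-linear maps. A direct calculation, using an identity of the form $x^{p^{a+(i+m)b}} = (x^{p^{mb}})^{p^{a+ib}}$ to align the exponents of the two linearity conditions, shows that $\phi_2 \circ \phi_1$ lies in $\Hom_R(F^{(i+k)}_* R, F^{(j+m)}_* R)$. Balance over $\cD^{(i)}$ and the claimed bimodule compatibility then follow from associativity of composition in $\End_\Z(R)$.

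For (ii), $F$-finiteness of $R$ makes each $F^{(e)}_* R$ finitely generated, so all the $\Hom$-modules and tensor products appearing in the statement are finitely generated over $R$. Completion at a maximal ideal $\fp$ is faithfully flat on finitely generated modules and commutes with $\Hom_R(F^{(e)}_* R, -)$ and with the tensor products in question (since $F^{(e)}_* R$ is finitely presented and $\widehat{R_\fp} \otimes_R \cD^{(e)}_R \cong \cD^{(e)}_{\widehat{R_\fp}}$). Hence it suffices to prove the isomorphism after completing at each maximal ideal, so one may assume $R$ is complete local FFRT; by Setup \ref{setup-FFRT} we then have $\cJ(F^{(e)}_* R) = \cJ$ common for all $e \geq 0$.

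For (iii), the key tool is the following functorial strengthening of Lemma \ref{lemma-irr-subset-equiv}, which I would establish first: whenever $\cJ(N) \subseteq \cJ(M)$ in the complete local setting, the composition map
\[
\Hom_R(M, P) \otimes_{\End_R(M)} \Hom_R(N, M) \xrightarrow{\sim} \Hom_R(N, P)
\]
is an isomorphism for every finitely generated $R$-module $P$. This follows by picking $\beta_s : N \to M$ and $\alpha_s : M \to N$ with $\id_N = \sum_s \alpha_s \circ \beta_s$ (available from Lemma \ref{lemma-irr-subset-equiv}) and verifying that $\phi \mapsto \sum_s (\phi \circ \alpha_s) \otimes \beta_s$ is a two-sided inverse to the composition map. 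Applying this strengthening with $M = F^{(i)}_* R$ and $N = F^{(i+k)}_* R$ both to the target of the statement and to the second factor of its source (using $\cJ(F^{(i+k)}_* R) = \cJ(F^{(i)}_* R)$), both sides of Lemma \ref{lemma-compo-isom-2} are rewritten as expressions of the form $(\cdots) \otimes_{\cD^{(i)}} \Hom_R(F^{(i+k)}_* R, F^{(i)}_* R)$, reducing the problem to the $k = 0$ case
\[
\Hom_R(F^{(i)}_* R, F^{(j)}_* R) \otimes_{\cD^{(i)}} \Hom_R(F^{(i)}_* R, F^{(i+m)}_* R) \xrightarrow{\sim} \Hom_R(F^{(i)}_* R, F^{(j+m)}_* R).
\]
For this reduced case, I would apply the same strengthening once more, with $M = F^{(i+m)}_* R$, $N = F^{(i)}_* R$, $P = F^{(j+m)}_* R$, to rewrite the target as $\Hom_R(F^{(i+m)}_* R, F^{(j+m)}_* R) \otimes_{\cD^{(i+m)}} \Hom_R(F^{(i)}_* R, F^{(i+m)}_* R)$, and then identify it with the source via the natural inclusion $\Hom_R(F^{(i)}_* R, F^{(j)}_* R) \hookrightarrow \Hom_R(F^{(i+m)}_* R, F^{(j+m)}_* R)$ supplied by the generalized composition framework of Remark \ref{rmk-F_*-notation} (a $\Z$-linear map carrying the linearity of the smaller $\Hom$ automatically carries that of the larger).

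The main obstacle I anticipate is precisely this last matching step. In the generalized-composition framework, a single $\Z$-linear endomorphism of $R$ can carry several compatible $\cD^{(e)}$-module structures at once, and the tensor-product balance conditions together with the bimodule actions must be tracked carefully at every stage of the reduction; extracting a clean $(\cD^{(i)}, \cD^{(i+k)})$-bimodule isomorphism from the chain of identifications while respecting the subtle interplay between the various $\cD^{(e)}$ is the delicate technical part.
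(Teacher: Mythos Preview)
Your steps (i) and (ii) match the paper exactly. The divergence is in (iii), where you take a circuitous route that runs into exactly the obstacle you flag, while the paper finishes in a single stroke. Once in the complete local setting with $\cJ(F^{(e)}_* R)$ constant, the paper does not reduce to $k=0$ at all: it simply uses that $F^{(j)}_* R$ is a direct summand of $(F^{(i)}_* R)^{\oplus n}$, picks $\alpha_t : F^{(j)}_* R \to F^{(i)}_* R$ and $\beta_t : F^{(i)}_* R \to F^{(j)}_* R$ with $\sum_t \beta_t \circ \alpha_t = \id$, and writes down the two-sided inverse $\psi \mapsto \sum_t \beta_t \otimes (\alpha_t \circ \psi)$ directly. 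In the generalized-composition framework of Remark~\ref{rmk-F_*-notation}, the same exponent-matching you did in (i) shows $\alpha_t \circ \psi \in \Hom_R(F^{(i+k)}_* R, F^{(i+m)}_* R)$, and the left-inverse check works because $\alpha_t \circ \phi_2 \in \cD^{(i)}$ slides across the tensor. This is the ``target-side'' analogue of your strengthening (split the codomain $F^{(j)}_* R$ of the left factor, not the domain $F^{(i+k)}_* R$ of the right factor), and it handles all four indices $i,j,k,m$ at once.

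Your final ``matching step'' is a genuine gap as written. The inclusion $\Hom_R(F^{(i)}_* R, F^{(j)}_* R) \hookrightarrow \Hom_R(F^{(i+m)}_* R, F^{(j+m)}_* R)$ is strict, and the two tensor products you want to identify are balanced over \emph{different} rings ($\cD^{(i)}$ versus $\cD^{(i+m)}$), so the inclusion on first factors does not induce an isomorphism of tensor products by any formal argument; proving it does would essentially require the lemma itself. The paper's choice to split $F^{(j)}_* R$ rather than $F^{(i+k)}_* R$ or $F^{(i)}_* R$ is precisely what avoids this circularity.
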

\begin{proof}
	The statement can be checked after localization and completion, so we may assume that $R$ is complete local and that all of the modules $F^{(e)}_* R$ have the same indecomposable summands. 

	We then construct a two sided inverse. We know there is some integer $n \geq 0$ such that $F^{(j)}_* R$ is a direct summand of $(F^{(i)}_* R)^{\oplus n}$, and hence there are $R$-linear maps $\alpha_t : F^{(j)}_* R \to  F^{(i)}_* R$ and $\beta_t : F^{(i)}_* R \to F^{(j)}_* R$, $t = 1, \dots n$, such that the identity map on $F^{(j)}_* R$ can be written as $\id = \sum_t \beta_t \circ \alpha_t$.  With this notation, the two-sided inverse is given by $[\psi \mapsto \sum_t \beta_t \otimes  \alpha_t \circ \psi]$. 
\end{proof}

\begin{proposition} \label{prop-higher-Frob-pullback}
	Let $R$ be a ring with {\upshape CL-FFRT}, and fix notation as in Setup \ref{setup-FFRT}. If $M$ is a left $\cD^{(e)}$-module then $\Phi^{s*}(M)$ admits a natural $\cD^{(e+s)}$-module structure. Moreover, the induced functor $\cD^{(e)}\mmod \to \cD^{(e+s)}\mmod$ is given by 
	$$\Hom_R(F^{(e)}_* R, F^{(e+s)}_* R) \otimes_{\cD^{(e)}} (-).$$
\end{proposition}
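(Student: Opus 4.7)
The plan is to exhibit $\Hom_R(F^{(e)}_* R, F^{(e+s)}_* R)$ as a $(\cD^{(e+s)}, \cD^{(e)})$-bimodule and then invoke the general tensor--bimodule formalism recalled in Section \ref{subscn-bimodules}. Concretely, post-composition by elements of $\cD^{(e+s)} = \End_R(F^{(e+s)}_* R)$ gives a left action, while pre-composition by elements of $\cD^{(e)} = \End_R(F^{(e)}_* R)$ gives a right action; these two actions commute by associativity of composition. With this bimodule structure in hand, for any left $\cD^{(e)}$-module $M$ the abelian group $\Hom_R(F^{(e)}_* R, F^{(e+s)}_* R) \otimes_{\cD^{(e)}} M$ automatically inherits a left $\cD^{(e+s)}$-module structure from the left action on the first tensor factor.

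Next, I would verify functoriality. A morphism $f \colon M \to M'$ of left $\cD^{(e)}$-modules induces, via $\id \otimes f$, a map between the corresponding tensor products, and this is $\cD^{(e+s)}$-linear because the $\cD^{(e+s)}$-action and $\id \otimes f$ operate on disjoint tensor factors. Hence we obtain a genuine functor $\cD^{(e)}\mmod \to \cD^{(e+s)}\mmod$. For the compatibility claim, I would check that when $e = 0$ the tensor product formula above reduces verbatim to Definition \ref{def-FFRT-F-pullback}, so that the present construction really does extend the original $\Phi^{s*}$ to modules at higher levels; the notation $\Phi^{s*}$ is then legitimately overloaded.

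I do not anticipate any serious obstacle: the proposition amounts to the observation that the Morita-style bimodule formalism of Section \ref{subscn-bimodules} applies to the family $\{\cD^{(e)}\}_{e \geq 0}$ with bridging bimodules given by Hom spaces of shifted Frobenius pushforwards. The more substantive compatibility one might expect next --- namely that the iterated application $\Phi^{*} \circ \cdots \circ \Phi^{*}$ ($s$ copies) agrees with the single-tensor description of $\Phi^{s*}$, corresponding to the remark ``$(\Phi^*)^s = \Phi^{s*}$'' following Definition \ref{def-FFRT-F-pullback} --- is a separate matter that will presumably follow from Lemma \ref{lemma-compo-isom-2}, which collapses an iterated tensor of Hom bimodules into a single one.
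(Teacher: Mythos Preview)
There is a genuine gap: you have misread what the proposition is asserting. At this point in the paper, $\Phi^{s*}$ is \emph{already defined} (Definition \ref{def-FFRT-F-pullback}) as the functor $\Hom_R(F^{(0)}_* R, F^{(s)}_* R) \otimes_{\cD^{(0)}} (-)$ from $\cD^{(0)}\mmod$ to $\cD^{(s)}\mmod$. The statement is that if $M$ happens to carry a $\cD^{(e)}$-module structure (and is hence a $\cD^{(0)}$-module by restriction of scalars along $\cD^{(0)} \subseteq \cD^{(e)}$), then the object $\Phi^{s*}(M) = \Hom_R(F^{(0)}_* R, F^{(s)}_* R) \otimes_{\cD^{(0)}} M$, which is a priori only a $\cD^{(s)}$-module, in fact carries a natural $\cD^{(e+s)}$-module structure, and that with this structure it is identified with $\Hom_R(F^{(e)}_* R, F^{(e+s)}_* R) \otimes_{\cD^{(e)}} M$.

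Your proposal instead \emph{defines} a new functor at level $e$ by the formula $\Hom_R(F^{(e)}_* R, F^{(e+s)}_* R) \otimes_{\cD^{(e)}} (-)$ and checks only that this agrees with Definition \ref{def-FFRT-F-pullback} when $e=0$. That check is tautological and does not touch the actual content: you never establish any relationship between $\Hom_R(F^{(0)}_* R, F^{(s)}_* R) \otimes_{\cD^{(0)}} M$ and $\Hom_R(F^{(e)}_* R, F^{(e+s)}_* R) \otimes_{\cD^{(e)}} M$ for $e > 0$. This is precisely where the CL-FFRT hypothesis enters, and it is precisely what Lemma \ref{lemma-compo-isom-2} provides. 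The paper's argument inserts $\cD^{(e)} \otimes_{\cD^{(e)}} (-)$ and then applies that lemma (with $i=0$, $j=s$, $k=m=e$) to obtain the bimodule isomorphism $\Hom_R(F^{(0)}_* R, F^{(s)}_* R) \otimes_{\cD^{(0)}} \cD^{(e)} \cong \Hom_R(F^{(e)}_* R, F^{(e+s)}_* R)$, which is the missing link. You relegated Lemma \ref{lemma-compo-isom-2} to a ``separate matter'', but it is exactly the ingredient needed here.
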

\begin{proof}
	We have the following isomorphisms of left $\cD^{(s)}$-modules
	\begin{align*}
		\Phi^{s*} (M) & = \Hom_R(F^{(0)}_* R, F^{(s)}_* R) \otimes_{\cD^{(0)}} M \\	
			& \cong \Hom_R(F^{(0)}_* R, F^{(s)}_* R) \otimes_{\cD^{(0)}} \cD^{(e)} \otimes_{\cD^{(e)}} M \\
			& \cong \Hom_R(F^{(e)}_*R, F^{(e+s)}_* R) \otimes_{\cD^{(e)}} M,
	\end{align*}
	where the last one follows from Lemma \ref{lemma-compo-isom-2}. Since $\Hom_R(F^{(e)}_*R, F^{(e+s)}_* R) \otimes_{\cD^{(e)}} M$ admits a natural left $\cD^{(e+s)}$-module structure, the first claim follows. The second statement also follows from the chain of isomorphisms above. 
\end{proof}

We will abuse notation and denote the induced functor from Proposition \ref{prop-higher-Frob-pullback} also by $\Phi^{s*}: \cD^{(e)}\mmod \to \cD^{(e+s)}\mmod$, the value of $e$ being clear from context. With this notation, setting $s = 1$, we have built a collection of functors as follows:
\begin{center}
	\begin{tikzcd}
		\cD^{(0)}\mmod \arrow[r, "\Phi^{*}"] & \cD^{(1)}\mmod \arrow[r, "\Phi^{*}"] & \cD^{(2)}\mmod \arrow[r, "\Phi^{*}"] & \cdots \arrow[r, "\Phi^{*}"] &  \cD^{(e)}\mmod \arrow[r, "\Phi^{*}"] & \cdots 
	\end{tikzcd}
\end{center}
Soon we will show all these functors are equivalences of categories. We begin by showing that composition behaves well.
\begin{lemma} \label{lemma-compo-isom}
	Let $S$ be a commutative ring and let $M_1, M_2, M_3$ be $S$-modules, and let $E_2 := \End_S(M_2)$. Suppose that there is some integer $n \geq 0$ such that $M_1$ or $M_3$ is a direct summand of $M_2^{\oplus n}$. Then the natural map
	$$\Hom_S(M_2, M_3) \otimes_{E_2} \Hom_S(M_1, M_2) \longrightarrow \Hom_S(M_1, M_3)$$
	given by $[\phi_2 \otimes \phi_1 \mapsto \phi_2 \circ \phi_1]$ is an isomorphism. 
\end{lemma}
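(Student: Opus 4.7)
The plan is to construct explicit two-sided inverses in each case, mirroring the argument already used in the proof of Lemma \ref{lemma-compo-isom-2}. Call the composition map
$$\Theta: \Hom_S(M_2,M_3) \otimes_{E_2} \Hom_S(M_1,M_2) \longrightarrow \Hom_S(M_1,M_3).$$

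In case (i), the splitting $M_3 \hookrightarrow M_2^{\oplus n} \twoheadrightarrow M_3$ gives maps $\alpha_t: M_3 \to M_2$ and $\beta_t: M_2 \to M_3$ for $t = 1, \dots, n$ with $\sum_t \beta_t \circ \alpha_t = \id_{M_3}$. I would define
$$\Psi_i: \Hom_S(M_1,M_3) \to \Hom_S(M_2,M_3) \otimes_{E_2} \Hom_S(M_1,M_2), \qquad \psi \mapsto \sum_t \beta_t \otimes (\alpha_t \circ \psi).$$
Then $\Theta \circ \Psi_i(\psi) = \sum_t (\beta_t \circ \alpha_t) \circ \psi = \psi$, and for a simple tensor $\phi_2 \otimes \phi_1$ one checks
$$\Psi_i \circ \Theta(\phi_2 \otimes \phi_1) = \sum_t \beta_t \otimes (\alpha_t \circ \phi_2 \circ \phi_1) = \sum_t \bigl(\beta_t \circ (\alpha_t \circ \phi_2)\bigr) \otimes \phi_1 = \phi_2 \otimes \phi_1,$$
where the second equality uses that $\alpha_t \circ \phi_2 \in E_2$ and the tensor product is balanced over $E_2$.

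In case (ii), the splitting $M_1 \hookrightarrow M_2^{\oplus n} \twoheadrightarrow M_1$ gives $\gamma_t: M_1 \to M_2$ and $\delta_t: M_2 \to M_1$ with $\sum_t \delta_t \circ \gamma_t = \id_{M_1}$. This time I would set
$$\Psi_{ii}(\psi) := \sum_t (\psi \circ \delta_t) \otimes \gamma_t.$$
Then $\Theta \circ \Psi_{ii}(\psi) = \psi \circ \sum_t \delta_t \circ \gamma_t = \psi$, and on a simple tensor
$$\Psi_{ii} \circ \Theta(\phi_2 \otimes \phi_1) = \sum_t (\phi_2 \circ \phi_1 \circ \delta_t) \otimes \gamma_t = \phi_2 \otimes \sum_t (\phi_1 \circ \delta_t \circ \gamma_t) = \phi_2 \otimes \phi_1,$$
again using $\phi_1 \circ \delta_t \in E_2$ to push it across the tensor.

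There is no real obstacle here: each case amounts to writing down the Morita-style formula for the inverse and verifying the two identities. The only point that warrants care is checking that the formulas for $\Psi_i$ and $\Psi_{ii}$ are well defined on the tensor product (i.e., independent of the choice of splitting data), but this is automatic once one verifies that they are two-sided inverses of $\Theta$, since an inverse is unique.
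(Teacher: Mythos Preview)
Your proof is correct and follows essentially the same approach as the paper: in each case you produce an explicit two-sided inverse to the composition map from the splitting data, and verify the two identities using that the relevant composite ($\alpha_t \circ \phi_2$ in case (i), $\phi_1 \circ \delta_t$ in case (ii)) lies in $E_2$ and hence passes across the tensor. The paper's proof is the same, only with slightly different notation in case (ii) and without your final remark on well-definedness.
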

\begin{proof}
	We construct a two-sided inverse in each case. In the case where $M_3$ is a direct summand of $M_2^{\oplus n}$ we can find maps $\alpha_i: M_3 \to M_2$ and $\beta_i: M_2 \to M_3$ such that the identity map on $M_3$ can be written as $\id_{M_3} = \sum_i \beta_i \circ \alpha_i$, and we claim that $[\psi \mapsto \sum_i \beta_i \otimes \alpha_i \circ \psi]$ defines a two sided inverse. 
	
	It is a right inverse because for all $\psi \in \Hom_S(M_1, M_3)$ we have
	\begin{align*}
		\sum_i \beta_i \circ \alpha_i \circ \psi & = \big( \sum_i \beta_i \circ \alpha_i \big) \circ \psi \\
		&  = \psi.\\
		\intertext{It is a left inverse because for all $\phi_1 \in \Hom_S(M_1, M_2)$ and all $\phi_2 \in \Hom_S(M_2, M_3)$ we have}
		\sum_i \beta_i \otimes \alpha_i \circ \phi_2 \circ \phi_1 & = \sum_i \beta_i \circ \alpha_i \circ \phi_2 \otimes \phi_1 \\ 
		& = \big( \sum_i \beta_i \circ \alpha_i \big) \circ \phi_2 \otimes \phi_1 \\
		& = \phi_2 \otimes \phi_1,
	\end{align*}
	where the first equality in the chain above follows because $\alpha_i \circ \phi_2 \in E_2$, and thus it commutes with the tensor.
	
	The case where $M_1$ is a direct summand of $M_2^{\oplus n}$ is similar: one expresses the identity map on $M_1$ as $M_1 = \sum_i \beta_i \circ \alpha_i$ where $\alpha_i: M_1 \to M_2$ and $\beta_i: M_2 \to M_1$ are $S$-linear maps, and the two sided inverse is given by $[\psi \mapsto \sum_i \psi \circ \beta_i \otimes \alpha_i]$. 
\end{proof}

\begin{proposition} \label{prop-Frob-pullback-compo}
	Let $R$ be a ring with {\upshape CL-FFRT}, fix notation as in Setup \ref{setup-FFRT}, and let $e, s, t \geq 0$ be nonnegative integers. The functor $\Phi^{(s + t)*}: \cD^{(e)}\mmod \to \cD^{(e+s+t)}\mmod$ is naturally equivalent to the composition of functors $\Phi^{t*} \circ \Phi^{s*}: \cD^{(e)}\mmod \to \cD^{(e+s)}\mmod \to \cD^{(e+s+t)}\mmod$. 
\end{proposition}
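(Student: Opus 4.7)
The plan is to unpack the three functors $\Phi^{s*}$, $\Phi^{t*}$, and $\Phi^{(s+t)*}$ via their concrete Hom-tensor descriptions furnished by Proposition~\ref{prop-higher-Frob-pullback}, and then reduce the claimed equivalence to the composition isomorphism already established in Lemma~\ref{lemma-compo-isom-2}.

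Concretely, for a fixed $M \in \cD^{(e)}\mmod$, I would apply Proposition~\ref{prop-higher-Frob-pullback} twice (with source levels $e$ and then $e+s$). Combined with associativity of the tensor product, this yields
$$\Phi^{t*}(\Phi^{s*}(M)) \cong \Hom_R(F^{(e+s)}_* R, F^{(e+s+t)}_* R) \otimes_{\cD^{(e+s)}} \Hom_R(F^{(e)}_* R, F^{(e+s)}_* R) \otimes_{\cD^{(e)}} M,$$
whereas $\Phi^{(s+t)*}(M) \cong \Hom_R(F^{(e)}_* R, F^{(e+s+t)}_* R) \otimes_{\cD^{(e)}} M$. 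Thus it suffices to produce a natural $(\cD^{(e+s+t)}, \cD^{(e)})$-bimodule isomorphism
$$\Hom_R(F^{(e+s)}_* R, F^{(e+s+t)}_* R) \otimes_{\cD^{(e+s)}} \Hom_R(F^{(e)}_* R, F^{(e+s)}_* R) \xrightarrow{\sim} \Hom_R(F^{(e)}_* R, F^{(e+s+t)}_* R),$$
and then tensor on the right by $M$ over $\cD^{(e)}$.

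This bimodule isomorphism is exactly what Lemma~\ref{lemma-compo-isom-2} provides, applied with the middle level $e+s$ separating the source level $e$ from the target level $e+s+t$. The map is given by the composition pairing $\phi_2 \otimes \phi_1 \mapsto \phi_2 \circ \phi_1$ interpreted as in Remark~\ref{rmk-F_*-notation}. Naturality in $M$ is automatic: every ingredient --- the description from Proposition~\ref{prop-higher-Frob-pullback}, tensor associativity, and the composition pairing --- is manifestly functorial, so the resulting isomorphism $\Phi^{t*}(\Phi^{s*}(M)) \cong \Phi^{(s+t)*}(M)$ is natural in $M$.

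I do not expect a serious obstacle here: the essential content of the proposition is already packaged in Lemma~\ref{lemma-compo-isom-2}, and the remainder is formal manipulation of tensor products. The only point requiring a moment's care is matching the indexing of the lemma to the factors appearing when iterating $\Phi^*$, which amounts to bookkeeping about which Frobenius levels are source, middle, and target.
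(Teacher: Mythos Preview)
Your approach is essentially the paper's: reduce the functor equivalence to a single bimodule isomorphism given by the composition pairing, then tensor with $M$. One small correction on the citation: the indices in Lemma~\ref{lemma-compo-isom-2} do not quite cover the case you need, since there the tensor is over $\cD^{(i)}$ with the right factor of the form $\Hom_R(F^{(i+k)}_*R, F^{(i+m)}_*R)$ for $k,m \geq 0$, whereas you need source level $e$ in the right factor with $i = e+s$, forcing $k = -s$. The paper handles this by first passing to the complete local case (so that all $F^{(\bullet)}_*R$ share the same indecomposable summands) and then invoking the more symmetric Lemma~\ref{lemma-compo-isom} with $M_1 = F^{(e)}_*R$, $M_2 = F^{(e+s)}_*R$, $M_3 = F^{(e+s+t)}_*R$. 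This is only a bookkeeping adjustment; your strategy is otherwise identical to the paper's.
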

\begin{proof}
	Composition gives us a natural morphism 
	$$\Hom_R(F^{(e+s)}_* R, F^{(e+s+t)}_* R) \otimes_{\cD^{(e+s)}} \Hom_R(F^{(e)}_* R, F^{(e+s)}_* R)  \to  \Hom_R(F^{(e)}_* R, F^{(e+s+t)}_* R)$$
	of $(\cD^{(e+s+t)}, \cD^{(e)})$-bimodules. We claim that this is an isomorphism. 
	
	If we assume the claim, note that the functor $\Phi^{(s + t)*}$ is given by tensoring with the second bimodule, whereas the composition $\Phi^{t*} \circ \Phi^{s*}$ is given by tensoring with the first bimodule (see Proposition \ref{prop-higher-Frob-pullback}). Hence the statement follows once we prove the claim.  
	
	Finally, note that the claim can be proved after localization and completion, so we may assume that $R$ is complete local and that all of the $R$-modules $F^{(i)}_* R$ have the same indecomposable summands, and thus the claim follows from Lemma \ref{lemma-compo-isom}.
\end{proof}

We now focus on proving that the functors $\Phi^{s*}$ give equivalences of categories. We begin by defining the functor that will provide the inverse.

\begin{definition}
	Let $R$ be a ring with CL-FFRT, fix notation as in Setup \ref{setup-FFRT}, and let $s \geq 0$ be an integer. The functor $\cC^s: \cD^{(s)}\mmod \to \cD^{(0)}\mmod$ is given by $\cC^s := \Hom_R(F^{(s)}_* R , F^{(0)}_* R) \otimes_{\cD^{(s)}} (-)$. Note that this functor depends on the choices of $a$ and $b$ made in Setup \ref{setup-FFRT}.
\end{definition}

Just as before, this induces functors on higher levels, as indicated below. The proof of these statements is analogous to the proofs of Proposition \ref{prop-higher-Frob-pullback} and Proposition \ref{prop-Frob-pullback-compo}.

\begin{proposition} \label{prop-higher-Cartier-functor-comp}
	Let $R$ be a ring with {\upshape CL-FFRT}, fix notation as in Setup \ref{setup-FFRT}, and let $e, s, t \geq 0$ be integers.
\begin{enuroman}
\item If $M$ is a left $\cD^{(e+s)}$-module then $\cC^s(M)$ admits a natural $\cD^{(e)}$-module structure, and the induced functor $\cD^{(e+s)}\mmod \to \cD^{(e)}\mmod$ is given by
	$$\Hom_R(F^{(e+s)}_* R, F^{(e)}_* R) \otimes_{\cD^{(e+s)}} (-).$$
\item The functor $\cC^{s+t}: \cD^{(e+s+t)}\mmod \to \cD^{(e)}\mmod$ is naturally equivalent to the composition of functors $\cC^t \circ \cC^s: \cD^{(e+s+t)}\mmod \to \cD^{(e+t)}\mmod \to \cD^{(e)}\mmod$.
\end{enuroman}
\end{proposition}
%
%

\begin{theorem} \label{thm-Frob-equivalence}
	Let $R$ be a ring with {\upshape CL-FFRT}, and fix notation as in Setup \ref{setup-FFRT}. For every integers $e, s \geq 0$ the functors $\Phi^{s*}$ and $\cC^s$ define a natural equivalence of categories
	$$\begin{tikzcd}
		\cD^{(e)}\mmod \arrow[r, "\Phi^{s*}", shift left] \arrow[r, "\sim" {yshift = 1pt}, shift left, swap] & \cD^{(e+s)}\mmod. \arrow[l, "\cC^s", shift left]
	\end{tikzcd}$$
\end{theorem}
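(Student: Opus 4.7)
The plan is to show that the compositions $\cC^s \circ \Phi^{s*}$ and $\Phi^{s*} \circ \cC^s$ are naturally equivalent to the respective identity functors. First, by the bimodule descriptions from Propositions \ref{prop-higher-Frob-pullback} and \ref{prop-higher-Cartier-functor}, these compositions amount to tensoring with the bimodules
\begin{align*}
\cB_1 &:= \Hom_R(F^{(e+s)}_* R, F^{(e)}_* R) \otimes_{\cD^{(e+s)}} \Hom_R(F^{(e)}_* R, F^{(e+s)}_* R), \\
\cB_2 &:= \Hom_R(F^{(e)}_* R, F^{(e+s)}_* R) \otimes_{\cD^{(e)}} \Hom_R(F^{(e+s)}_* R, F^{(e)}_* R),
\end{align*}
which carry natural $(\cD^{(e)}, \cD^{(e)})$- and $(\cD^{(e+s)}, \cD^{(e+s)})$-bimodule structures. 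The composition rule $[\phi_2 \otimes \phi_1 \mapsto \phi_2 \circ \phi_1]$ then produces natural bimodule morphisms $\mu_1: \cB_1 \to \cD^{(e)}$ and $\mu_2: \cB_2 \to \cD^{(e+s)}$, and the entire theorem reduces to showing that $\mu_1$ and $\mu_2$ are isomorphisms.

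Next I would reduce to the complete local case, exactly as was done in the proofs of Lemma \ref{lemma-compo-isom-2} and Proposition \ref{prop-Frob-pullback-compo}. Since all of the Hom-modules above are finitely generated over the noetherian ring $R$, one can base change to $\widehat{R_\fp}$ at each prime $\fp \sq R$ and check the isomorphism claims there. Once $R$ has been replaced with a complete local ring, Setup \ref{setup-FFRT} guarantees $\cJ(F^{(e)}_* R) = \cJ(F^{(e+s)}_* R)$, so by the Krull-Schmidt theorem each of $F^{(e)}_* R$ and $F^{(e+s)}_* R$ is a direct summand of some finite direct sum of copies of the other. Lemma \ref{lemma-compo-isom} then applies directly to both $\mu_1$ and $\mu_2$ and yields the desired isomorphisms.

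The main substance of the argument lies in the first step: correctly identifying $\cB_1$ and $\cB_2$ and recognizing the composition map as the Morita-style isomorphism from Lemma \ref{lemma-compo-isom}. The local-to-global reduction, while standard, requires some care because the tensor products in $\cB_1$ and $\cB_2$ are taken over the noncommutative rings $\cD^{(e+s)}$ and $\cD^{(e)}$; however, all Hom-modules involved are finitely generated over $R$, and the relevant $R$-actions (coming from the outer factors of $\Hom_R$) commute with the inner $\cD$-bimodule actions, so flat base change $- \otimes_R \widehat{R_\fp}$ commutes both with the formation of $\Hom$ and with the tensor products $\otimes_{\cD^{(\bullet)}}$, reducing us cleanly to the complete local setting.
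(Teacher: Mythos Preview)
Your proposal is correct and follows essentially the same approach as the paper: both arguments identify $\cC^s \circ \Phi^{s*}$ and $\Phi^{s*} \circ \cC^s$ as tensoring with the bimodules $\cB_1$ and $\cB_2$, use the composition map as the natural transformation to the identity, reduce to the complete local case, and then invoke Lemma \ref{lemma-compo-isom}. Your added remark about why base change to $\widehat{R_\fp}$ interacts correctly with the noncommutative tensor products is a welcome clarification that the paper leaves implicit.
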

\begin{proof}
	By virtue of Proposition \ref{prop-higher-Frob-pullback} and \ref{prop-higher-Cartier-functor-comp}, we see that the composition $\cC^s \circ \Phi^{s*}$ is given by 
	$$\cC^s \circ \Phi^{s*} = \Hom_R(F^{(e+s)}_* R, F^{(e)}_* R) \otimes_{\cD^{(e+s)}} \Hom_R(F^{(e)}_* R, F^{(e+s)}_* R) \otimes_{\cD^{(e)}} (-).$$
	Composition induces a natural $(\cD^{(e)}, \cD^{(e)})$-bimodule map 
	$$\Hom_R(F^{(e+s)}_* R, F^{(e)}_* R) \otimes_{\cD^{(e+s)}} \Hom_R(F^{(e)}_* R, F^{(e+s)}_* R) \longrightarrow \Hom_R(F^{(e)}_* R , F^{(e)}_* R) = \cD^{(e)},$$
	which we claim is an isomorphism. Note that, assuming the claim, we get $\cC^s \circ \Phi^{s*} \simeq \cD^{(e)} \otimes_{\cD^{(e)}} (-) \simeq \id$.
	
	To prove the claim we may localize and complete, and hence we may assume that $R$ is complete local and that all of the modules $F^{(i)}_* R$ have the same indecomposable summands. The claim then follows from Lemma \ref{lemma-compo-isom}. 
	
	Similarly, the composition $\Phi^{s*} \circ \cC^s$ is given by 
	$$\Phi^{s*} \circ \cC^s  = \Hom_R(F^{(e)}_* R, F^{(e+s)}_* R) \otimes_{\cD^{(e)}} \Hom_R(F^{(e+s)}_* R, F^{(e)}_* R) \otimes_{\cD^{(e+s)}} (-),$$
	composition induces a bimodule isomorphism 
	$$\Hom_R(F^{(e)}_* R, F^{(e+s)}_* R) \otimes_{\cD^{(e)}} \Hom_R(F^{(e+s)}_* R, F^{(e)}_* R) \longrightarrow \Hom_R(F^{(e+s)}_* R, F^{(e+s)}_* R) = \cD^{(e+s)},$$
	and hence $\Phi^{s*} \circ \cC^s \simeq \id$. 
\end{proof}

\begin{corollary}
	The functor $\Phi^{s*}: \cD^{(e)}\mmod \to \cD^{(e+s)} \mmod$ is naturally equivalent to
	$$\Phi^{s*} \simeq \Hom_{\cD^{(e)}} \big( \Hom_R(F^{(e+s)}_* R, F^{(e)}_* R), - \big)$$
	and, similarly, the functor $\cC^s: \cD^{(e+s)}\mmod \to \cD^{(e)}\mmod$ is naturally equivalent to
	$$\cC^s \simeq \Hom_{\cD^{(e+s)}} \big( \Hom_R(F^{(e)}_* R, F^{(e+s)}_* R), - \big).$$
\end{corollary}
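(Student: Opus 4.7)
The plan is to deduce this corollary as a purely formal consequence of the equivalence of categories established in Theorem \ref{thm-Frob-equivalence}, together with the standard tensor--hom adjunction and the uniqueness of adjoint functors. No further computation with the specific bimodules $\Hom_R(F^{(i)}_* R, F^{(j)}_* R)$ should be needed.

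First I would recall the tensor--hom adjunction: for any $(\cD^{(e)}, \cD^{(e+s)})$-bimodule $B$, the functor $B \otimes_{\cD^{(e+s)}} (-) : \cD^{(e+s)}\mmod \to \cD^{(e)}\mmod$ has $\Hom_{\cD^{(e)}}(B, -)$ as a right adjoint. Specializing to the bimodule $B = \Hom_R(F^{(e+s)}_* R, F^{(e)}_* R)$, which by Proposition \ref{prop-higher-Cartier-functor} is precisely what realizes $\cC^s$ as a tensor functor at level $e$, we see that $\cC^s$ has right adjoint $\Hom_{\cD^{(e)}}\bigl( \Hom_R(F^{(e+s)}_* R, F^{(e)}_* R), - \bigr)$. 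By the symmetric application using Proposition \ref{prop-higher-Frob-pullback}, the functor $\Phi^{s*}$ has right adjoint $\Hom_{\cD^{(e+s)}}\bigl( \Hom_R(F^{(e)}_* R, F^{(e+s)}_* R), - \bigr)$.

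Next I would invoke Theorem \ref{thm-Frob-equivalence}: it tells us that $\Phi^{s*}$ and $\cC^s$ form a quasi-inverse pair of equivalences of categories. A quasi-inverse to an equivalence is automatically both a left and a right adjoint, so $\Phi^{s*}$ is a right adjoint of $\cC^s$ and $\cC^s$ is a right adjoint of $\Phi^{s*}$. By the uniqueness of right adjoints up to natural isomorphism, $\Phi^{s*}$ must be naturally equivalent to the Hom-functor identified above as the right adjoint of $\cC^s$, and similarly $\cC^s$ must be naturally equivalent to the right adjoint of $\Phi^{s*}$. These two natural equivalences are exactly the two displayed statements of the corollary.

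I do not anticipate any substantive obstacle: the whole argument is abstract-nonsense adjunction bookkeeping layered on top of Theorem \ref{thm-Frob-equivalence}. The only point requiring mild care is verifying that the left/right bimodule structure on each $\Hom_R(F^{(i)}_* R, F^{(j)}_* R)$ is on the correct side so that the tensor--hom adjunction is applied with matching handedness; this is routine from the conventions fixed in Setup \ref{setup-FFRT} together with Propositions \ref{prop-higher-Frob-pullback} and \ref{prop-higher-Cartier-functor}.
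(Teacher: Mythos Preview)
Your argument is correct and is essentially the same approach as the paper's: the paper simply cites this as ``a standard fact from Morita theory'' (referencing \cite[Thm.~22.1]{Anderson-Fuller-92}), and what you have written is precisely the unpacking of that fact via tensor--hom adjunction and uniqueness of adjoints.
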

\begin{proof}
  This is a standard fact from Morita theory \cite[Thm. 22.1]{Anderson-Fuller-92}.
\end{proof}

Next, we use Theorem \ref{thm-Frob-equivalence} to give an alternative characterization of the category of $\cD$-modules on a ring with CL-FFRT. In the regular case this is usually loosely known as ``Frobenius descent" (see \cite{AMBL}). Note that the remaining portion of this section will not be used in our applications to local cohomology.

\begin{definition}
	Let $R$ be a ring with CL-FFRT, and fix notation as in Setup \ref{setup-FFRT}. A $\Phi$-divided left $\cD^{(0)}$-module $(M_e, \alpha_e)_{e = 0}^\infty$ consists of the following data:
	\begin{enualph}
	\item A collection of left $\cD^{(0)}$-modules $(M_0, M_1, \dots)$.
	\item For every integer $e \geq 0$, an isomorphism $\alpha_e : \Phi^*(M_{e+1}) \xrightarrow{\sim} M_e$ of $\cD^{(0)}$-modules, where $\Phi^*(M_{e+1})$ is viewed as a $\cD^{(0)}$-modules via restriction of scalars from $\cD^{(1)}$. 
	\end{enualph}
	The collection of such objects is a category in a natural way: a morphism $\phi: (M_e, \alpha_e)_{e =0}^\infty \to (N_e, \beta_e)_{e = 0}^\infty$ is a collection of $\cD^{(0)}$-linear homomorphisms $\phi_e: M_e \to N_e$ such that for every $e \geq 0$ the diagram
	$$
	\begin{tikzcd}
		\Phi^*(M_{e+1}) \arrow[d, "\Phi^* \phi_{e+1}", swap] \arrow[r, "\alpha_e"] & M_e \arrow[d, "\phi_e"] \\
		\Phi^*(N_{e+1}) \arrow[r, "\beta_e"] & N_e
	\end{tikzcd}
	$$
	commutes. We denote this category by $\cD^{(0)} \m \Phi \m \divmod$. Note that this category depends on the choices of $a,b$ made in Setup \ref{setup-FFRT}.
\end{definition}

We will show that this category is equivalent to the category of left $\cD$-modules. We begin by explaining how to produce a $\Phi$-divided left $\cD^{(0)}$-module from a left $\cD$-module. Given a left $\cD$-module $M$ and an integer $e \geq 0$, we can view $M$ as a $\cD^{(e)}$-module via restriction of scalars, and thus apply to it the functor $\cC^e: \cD^{(e)}\mmod \to \cD^{(0)} \mmod$ to get a left $\cD^{(0)}$-module $M_e := \cC^e (M)$. Using Proposition \ref{prop-higher-Cartier-functor-comp} and Theorem \ref{thm-Frob-equivalence}, we get canonical isomorphisms
$$\Phi^* \circ \cC^e(M) \cong \Phi^* \circ \cC \circ \cC^{e-1}(M) \cong \cC^{e-1}(M),$$
i.e., isomorphisms $\alpha_e: \Phi^*(M_e) \xrightarrow{\sim} M_{e-1}$. We conclude that $(M_e, \alpha_e)$ is a $\Phi$-divided left $\cD^{(0)}$-module. It is also clear that the construction $[M \to (M_e, \alpha_e)_{e = 0}^\infty ]$ is functorial.

Conversely, suppose $(M_e, \alpha_e)_{e = 0}^\infty$ is a $\Phi$-divided left $\cD^{(0)}$-module, and let us show that $M_0$ then has a natural $\cD$-module structure. Note that given an integer $e \geq 0$ the composition $\alpha_1 \circ \Phi^*\alpha_2 \circ \cdots \circ \Phi^{(e-1)*} \alpha_e$ gives an isomorphism $\Phi^{e*}(M_e) \xrightarrow{\sim} M_0$. Since $\Phi^{e*}(M_e)$ is naturally a left $\cD^{(e)}$-module, we can equip $M_0$ with the unique left $\cD^{(e)}$-module structure that makes this isomorphism $\cD^{(e)}$-linear. 

We now need to check that these $\cD^{(e)}$-module structures are compatible as $e$ varies. For this, fix an $e \geq 0$, let $M_{0}^{(e)}$ be $M_0$ with the $\cD^{(e)}$-module structure induced from $\Phi^{e*}(M_e)$, and let $M_{0}^{(e+1)}$ be $M_0$ with the $\cD^{(e+1)}$-module structure induced from $\Phi^{(e+1)*}(M_{e+1})$. The compatibility can be rephrased as saying that the identity map $M_0^{(e+1)} \xrightarrow{=} M_0^{(e)}$ is $\cD^{(e)}$-linear. To see that this is the case, observe that because of the way the maps $\Phi^{e*}(M_e) \xrightarrow{\sim} M_0$ and $\Phi^{(e+1)*}(M_{e+1}) \xrightarrow{\sim} M_0$ were constructed, the diagram
$$
\begin{tikzcd}[column sep = normal]
	\Phi^{(e+1)*}(M_{e+1}) \arrow[d, swap, "\sim" {anchor = north, rotate = 90}] \arrow[d, "\Phi^{e*} \alpha_e", swap]  \arrow[r, "\sim" ] & M_0^{(e+1)} \arrow[d, "="{anchor = south, rotate = 90}] \\
	\Phi^{e*}(M_e) \arrow[r, "\sim"] &   M_0^{(e)} 
\end{tikzcd}
$$
commutes, and that $\Phi^{e*}(\alpha_e)$ is $\cD^{(e)}$-linear because $\Phi^{e*}$ is a functor $\cD^{(0)}\mmod \to \cD^{(e)}\mmod$.

\begin{lemma}
	Let $(M_e, \alpha_e)_{e = 0}^\infty$ and $(N_e, \beta_e)_{e = 0}^\infty$ be $\Phi$-divided left $\cD^{(0)}$-modules, and let $(\phi_e: M_e \to N_e)_{e = 0}^\infty$ be a morphism. After equipping $M_0$ and $N_0$ with the $\cD$-module structure defined above, the map $\phi_0: M_0 \to N_0$ is $\cD$-linear.
\end{lemma}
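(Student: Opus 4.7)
The plan is to reduce the problem to showing $\cD^{(e)}$-linearity of $\phi_0$ for every $e\geq 0$, since by the preceding discussion $\cD = \bigcup_{e\geq 0} \cD^{(e)}$ and a map of abelian groups is $\cD$-linear if and only if it is $\cD^{(e)}$-linear for all $e$.

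Fix $e \geq 0$. Recall from the construction preceding the lemma that the $\cD^{(e)}$-module structure on $M_0$ is defined to be the unique one making the isomorphism
\[
\gamma_e^M \;:=\; \alpha_1 \circ \Phi^*\alpha_2 \circ \cdots \circ \Phi^{(e-1)*}\alpha_e \;:\; \Phi^{e*}(M_e) \xrightarrow{\sim} M_0
\]
into a $\cD^{(e)}$-linear map, and similarly $\gamma_e^N: \Phi^{e*}(N_e) \xrightarrow{\sim} N_0$ defines the $\cD^{(e)}$-module structure on $N_0$. So it suffices to show that the outer square
\[
\begin{tikzcd}
\Phi^{e*}(M_e) \arrow[d, "\Phi^{e*}(\phi_e)"'] \arrow[r, "\gamma_e^M", "\sim"'] & M_0 \arrow[d, "\phi_0"] \\
\Phi^{e*}(N_e) \arrow[r, "\gamma_e^N", "\sim"'] & N_0
\end{tikzcd}
\]
commutes, because the left vertical arrow is $\cD^{(e)}$-linear (as the image under the functor $\Phi^{e*}\colon \cD^{(0)}\mmod \to \cD^{(e)}\mmod$ of the $\cD^{(0)}$-linear map $\phi_e$), and both horizontal arrows are $\cD^{(e)}$-linear isomorphisms by construction.

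To verify that the outer square commutes, I would factor it into the $e$ smaller squares obtained by applying $\Phi^{i*}$ to the compatibility square $\alpha_i \circ \Phi^*\phi_{i+1} = \phi_i \circ \alpha_i$ (more precisely, $\alpha_i \circ \Phi^*\phi_{i+1} = \phi_i \circ \alpha_i$ for the $M$-side versus $\beta_i$ for the $N$-side, which by definition of a morphism of $\Phi$-divided modules says the square
$\alpha_i \circ \Phi^*\phi_{i+1} = \phi_i \circ \beta_i$ holds). Each such smaller square commutes because $\Phi^{i*}$ is a functor applied to a commuting square, and stacking them yields commutativity of the outer square. This gives the desired $\cD^{(e)}$-linearity, and since $e$ was arbitrary, $\phi_0$ is $\cD$-linear.

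There is no real obstacle here; the content is purely diagrammatic. The only thing one needs to be careful about is matching up the two different descriptions of the $\cD^{(e)}$-module structure (one via $\Phi^{e*}(M_e)$, the other by the compatibility check already given in the construction) so that stacking the squares truly recovers the isomorphism $\gamma_e^M$ used to define the structure. This is immediate from the way $\gamma_e^M$ was defined as the composite $\alpha_1 \circ \Phi^*\alpha_2 \circ \cdots \circ \Phi^{(e-1)*}\alpha_e$.
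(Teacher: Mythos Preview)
Your approach is essentially the same as the paper's: both reduce to checking $\cD^{(e)}$-linearity for each $e$ by showing that the square with $\Phi^{e*}(\phi_e)$ and $\phi_0$ commutes, and both obtain this by iterating the compatibility condition defining a morphism of $\Phi$-divided modules (you stack squares, the paper does induction---the same thing). One notational slip: the compatibility square reads $\phi_i \circ \alpha_i = \beta_i \circ \Phi^*\phi_{i+1}$, not $\alpha_i \circ \Phi^*\phi_{i+1} = \phi_i \circ \beta_i$ as you wrote; this does not affect the argument.
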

\begin{proof}
	We claim that for every $e \geq 0$ the diagram
$$
\begin{tikzcd}
	\Phi^{e*}(M_e) \arrow[d, "\Phi^{e*}\phi_e", swap] \arrow[r, "\sim"] & M_0 \arrow[d, "\phi_0"] \\
	\Phi^{e*}(N_e) \arrow[r, "\sim"] & N_0 
\end{tikzcd}
$$
commutes, i.e., that we have $\phi_0 \circ \alpha_0 \circ \Phi^* \alpha_1 \circ \cdots \Phi^{(e-1)*} \alpha_{e-1} = \beta_0 \circ \Phi^* \beta_1 \circ \cdots  \circ \Phi^{(e-1)*} \beta_{e-1} \circ \Phi^{e*} \phi_e$. We prove this by induction on $e \geq 1$.

Note that, because $(\phi_e)_{e = 0}^\infty$ is a morphism of $\Phi$-divided left $\cD^{(0)}$-modules, we get that $\phi_i \circ \alpha_i = \beta_i \circ \Phi^* \phi_{i +1}$ for every $i \geq 0$, and plugging in $i = 0$ we get the base case $e = 1$. Now assuming the claim holds for an integer $e \geq 1$, note that
\begin{align*}
	\phi_0 \circ \alpha_0 \circ \Phi^* \alpha_1 \circ \cdots \Phi^{e*} \alpha_{e} & = \beta_0 \circ \Phi^* \beta_1 \circ \cdots  \circ \Phi^{(e-1)*} \beta_{e-1} \circ \Phi^{e*} \phi_e \circ \Phi^{e*} \alpha_{e +1} \\
		& = \beta_0 \circ \Phi^* \beta_1 \circ \cdots  \circ \Phi^{(e-1)*} \beta_{e-1} \circ \Phi^{e*} \big( \phi_e \circ  \alpha_{e +1} \big) \\
		& = \beta_0 \circ \Phi^* \beta_1 \circ \cdots  \circ \Phi^{(e-1)*} \beta_{e-1} \circ \Phi^{e*} \big( \beta_e \circ \Phi^* \phi_{e + 1} \big) \\
		& = \beta_0 \circ \Phi^* \beta_1 \circ \cdots  \circ \Phi^{(e-1)*} \beta_{e-1} \circ \Phi^{e*} \beta_e \circ \Phi^{(e+1)*} \phi_{e+1}. \qedhere
\end{align*}
\end{proof}

\begin{theorem} \label{thm-Frob-desc}
	Let $R$ be a ring with {\upshape CL-FFRT}, and fix notation as in Setup \ref{setup-FFRT}. Then the functors given above induce a natural equivalence of categories
	$$\begin{tikzcd}
		\cD \mmod \arrow[r,shift left] \arrow[r, "\sim" {yshift = 1pt}, shift left, swap] & \cD^{(0)} \m \Phi \m \divmod. \arrow[l, shift left]
	\end{tikzcd}$$
\end{theorem}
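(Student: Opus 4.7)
The plan is to show directly that the two functors $\Psi : \cD\mmod \to \cD^{(0)}\m\Phi\m\divmod$, sending $M \mapsto (\cC^e(M), \alpha_e)_{e\geq 0}$, and $\Xi$ in the opposite direction, constructed in the paragraphs preceding the statement, are mutually quasi-inverse. The equivalences of Theorem \ref{thm-Frob-equivalence} and the coherence Propositions \ref{prop-Frob-pullback-compo} and \ref{prop-Cartier-functor-comp} do all the heavy lifting; what remains is bookkeeping.

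First I would verify $\Xi \circ \Psi \simeq \id_{\cD\mmod}$. Given a $\cD$-module $M$, by definition $\Xi\Psi(M) = \cC^0(M) = M$ as a $\cD^{(0)}$-module, and for each $e \geq 1$ the $\cD^{(e)}$-module structure placed on $\Xi\Psi(M)$ is the unique one making the iterated composition
$$\gamma_e := \alpha_0 \circ \Phi^*\alpha_1 \circ \cdots \circ \Phi^{(e-1)*}\alpha_{e-1}: \Phi^{e*}\cC^e(M) \xrightarrow{\sim} M$$
a $\cD^{(e)}$-linear isomorphism. By the very construction of the $\alpha_i$ via Proposition \ref{prop-Cartier-functor-comp} and Theorem \ref{thm-Frob-equivalence}, $\gamma_e$ coincides with the canonical natural isomorphism $\Phi^{e*}\cC^e(M) \xrightarrow{\sim} M$ of the equivalence from Theorem \ref{thm-Frob-equivalence} (applied at level $e$, viewing $M$ as a $\cD^{(e)}$-module). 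Since this natural isomorphism is $\cD^{(e)}$-linear with respect to the original $\cD^{(e)}$-structure on $M$, the induced and original structures agree for every $e$, and hence $\Xi\Psi = \id$ on objects; naturality is immediate from the functoriality of each $\cC^e$.

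Next I would verify $\Psi \circ \Xi \simeq \id$. Given $(M_e, \alpha_e)_{e \geq 0}$, set $M := \Xi(M_e,\alpha_e)$; by construction we have canonical $\cD^{(e)}$-linear isomorphisms $\gamma_e : \Phi^{e*}(M_e) \xrightarrow{\sim} M$. Applying $\cC^e$ to $\gamma_e$ and composing with the canonical isomorphism $M_e \xrightarrow{\sim} \cC^e \Phi^{e*}(M_e)$ coming from Theorem \ref{thm-Frob-equivalence} produces $\cD^{(0)}$-linear isomorphisms $\eta_e : M_e \xrightarrow{\sim} \cC^e(M)$, where $\cC^e(M)$ is the $e$-th component of $\Psi(M) = \Psi\Xi(M_e, \alpha_e)$. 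It then remains to check that $(\eta_e)_{e \geq 0}$ intertwines the original structure maps $\alpha_e$ with the structure maps $\alpha_e'$ attached to $\Psi(M)$; after unwinding the definitions this reduces to the relation $\gamma_{e+1} = \gamma_e \circ \Phi^{e*}\alpha_e$, which is the defining recursion for the $\gamma_e$.

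The main obstacle I anticipate is the coherence bookkeeping: one must keep track of the fact that the canonical natural isomorphisms of the equivalences from Theorem \ref{thm-Frob-equivalence} for varying $s$ are compatible with the iterated-composition identifications of Propositions \ref{prop-Frob-pullback-compo} and \ref{prop-Cartier-functor-comp}. This is a standard Morita-theoretic coherence issue, but here it must be unwound explicitly in terms of the bimodule isomorphisms of Lemma \ref{lemma-compo-isom-2}. Once this is settled, both $\Xi\Psi \simeq \id$ and $\Psi\Xi \simeq \id$ reduce to mechanical level-by-level checks, with no further input from the FFRT hypothesis beyond what was already used to establish Theorem \ref{thm-Frob-equivalence}.
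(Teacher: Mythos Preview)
Your proposal is correct and follows essentially the same route as the paper: verify $\Xi\Psi \simeq \id$ via the canonical isomorphism $\Phi^{e*}\cC^e(M) \cong M$ from Theorem \ref{thm-Frob-equivalence}, then build the isomorphisms $\eta_e : M_e \xrightarrow{\sim} \cC^e(M_0)$ exactly as you do (the paper calls these $\gamma_e$) and check they assemble to a morphism in $\cD^{(0)}\m\Phi\m\divmod$. The one place where the paper is more detailed is precisely the point you flag as the obstacle: the paper carries out the coherence check by writing down an explicit $3 \times 3$ diagram whose top-left square is reduced to a commutative square of bimodule composition maps (in the spirit of Lemma \ref{lemma-compo-isom-2}), rather than relying on the recursion $\gamma_{e+1} = \gamma_e \circ \Phi^{e*}\alpha_e$ alone---that recursion gives you compatibility of the $\eta_e$ with the $\alpha_e$, but you still need to know that the structure maps $\alpha'_e$ on $\Psi(M_0)$ match up with the canonical identifications $\Phi^*\cC^{e+1} \simeq \cC^e$, which is the coherence statement you correctly anticipate.
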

\begin{proof}
	We check that both compositions return the identity functor up to natural isomorphism. If $M$ is a left $\cD$-module, the composition $\cD\mmod \to \cD^{(0)} \m \Phi \m \divmod \to \cD\mmod$ sends $[M \mapsto (M, \cC (M), \cC^2(M) , \dots) \mapsto M]$. By Theorem \ref{thm-Frob-equivalence}, for every $e \geq 0$ we have an isomorphism $\Phi^{e*} \cC^e(M) \cong M$ of $\cD^{(e)}$-modules, and we conclude from this that the output has the same $\cD$-module structure as the input. If $\phi: M \to N$ is a morphism of left $\cD$-modules, the composition $\cD\mmod \to \cD^{(0)} \m \Phi \m \divmod \to \cD\mmod$ sends $\phi$ to $[\phi \mapsto (\phi, \cC \phi, \cC^2 \phi, \dots) \mapsto \phi]$. We conclude that the composition $\cD\mmod \to \cD^{(0)} \m \Phi \m \divmod \to \cD\mmod$ is the identity functor.

	We now consider the composition $\cD^{(0)} \m \Phi \m \divmod \to \cD\mmod \to \cD^{(0)} \m \Phi \m \divmod$. A $\Phi$-divided left $\cD^{(0)}$-module $(M_e, \alpha_e)_{e = 0}^\infty$ is sent by this composition to $[(M_e, \alpha_e)_{e = 0}^\infty \mapsto M_0 \mapsto (\cC^e(M_0), \beta_e)_{e = 0}^\infty]$; we claim that $(M_e, \alpha_e)_{e = 0}^\infty$ is naturally isomorphic to $(\cC^e(M_0), \beta_e)_{e = 0}^\infty$ as $\Phi$-divided left $\cD^{(0)}$-modules.

	For this, recall that for every $e \geq 0$ the $\cD^{(e)}$-module structure of $M_0$ is given through the isomorphism $\Phi^{e*}(M_e) \xrightarrow{\sim} M_0$ coming from $(\alpha_e)_{e = 0}^\infty$, and we let $\gamma_e$ denote the composition 
	$$\gamma_e: M_e \xrightarrow{\sim} \cC^e \circ \Phi^{e*} (M_e) \xrightarrow{\cC^e \alpha_e} \cC^e (M_0).$$
	We need to check that $\gamma_e$ is a morphism of $\Phi$-divided left $\cD^{(0)}$-modules. For this, consider the diagram
	$$
	\begin{tikzcd}
		\Phi^*(M_{e+1}) \arrow[d, equals] \arrow[r, "\sim"] \arrow[rr, bend left = 15, "\Phi^* \gamma_{e + 1}"] \arrow[dd, bend right = 60, "\alpha_e", swap] & \Phi^* \circ \cC^{e+1} \circ \Phi^{(e+1)*} (M_{e + 1}) \arrow[r, "\sim"] \arrow[d, "\sim" {anchor = south, rotate = 90}] & \Phi^* \circ \cC^{e+1} (M_0) \arrow[d, "\sim" {anchor = south, rotate = 90}] \arrow[dd, bend left = 60, "\beta_e"]\\
		\Phi^*(M_{e+1}) \arrow[r, "\sim"]  \arrow[d, "\sim" {anchor = south, rotate = 90}]& \cC^e \circ \Phi^{(e+1)*} (M_{e+1}) \arrow[r, "\sim"] \arrow[d, "\sim" {anchor = south, rotate = 90}]& \cC^e(M_0) \arrow[d, equals]\\
		M_e \arrow[r, "\sim"] \arrow[rr, bend right = 15, "\gamma_e", swap] & \cC^e \circ \Phi^{e*} (M_e) \arrow[r, "\sim"] & \cC^e(M_0).
	\end{tikzcd}
	$$
	We claim that this diagram is commutative; note that this will prove that $\gamma_e$ is a morphism of $\Phi$-divided $\cD^{(0)}$-modules. The top right square is a commutative diagram associated to the isomorphism of functors $\id \simeq \Phi^* \circ \cC$, the bottom left square is a commutative diagram associated to the isomorphism of functors $\id \simeq \cC^e \circ \Phi^{e*}$, and the bottom right square is obtained by applying the functor $\cC^e$ to a diagram we know to be commutative. It hence suffices to check that the top left square is commutative.

	For compactness of notation, let $F^{(i)} := F^{(i)}_* R$ for every $i \geq 0$. We observe that the following diagram of bimodules, where all arrows are given by composition, is commutative:
	$$
	\begin{tikzcd}
		\Hom_R(F^{(0)}, F^{(1)})  & \Hom_R(F^{(0)}, F^{(1)}) \otimes_{\cD^{(0)}} \Hom_R(F^{(e+1)}, F^{(0)}) \otimes_{\cD^{(e+1)}} \Hom_R(F^{(0)}, F^{(e+1)}) \arrow[l, "\sim", swap] \arrow[d, "\sim" {anchor = north, rotate = 90}]\\
		\Hom_R(F^{(0)}, F^{(1)}) \arrow[u, equals] & \Hom_R(F^{(e+1)}, F^{(1)}) \otimes_{\cD^{(e+1)}} \Hom_R(F^{(0)}, F^{(1)}). \arrow[l, "\sim", swap]
	\end{tikzcd}
	$$
	By using the descriptions of $\Phi^{i*}$ and $\cC^{i}$ given in Propositions \ref{prop-higher-Frob-pullback} and \ref{prop-higher-Cartier-functor-comp}, we see that the square in question is obtained by applying $(-) \otimes_{\cD^{(0)}} M_{e+1}$ to a commutative diagram, and is therefore commutative. This completes the proof of the claim.

	Finally, suppose that $\phi = (\phi_e)_{e = 0}^\infty : (M_e, \alpha_e)_{e = 0}^\infty \to (N_e, \beta_e)_{e = 0}^\infty$ is a morphism of $\Phi$-divided left $\cD^{(0)}$-modules. For every $e \geq 0$ we get a commutative diagram
	$$
	\begin{tikzcd}
		M_e \arrow[r, "\sim"] \arrow[d, "\phi_e", swap] & \cC^e \circ \Phi^{e*} (M_e) \arrow[r, "\sim"] \arrow[d, "\cC^e \Phi^{e*} \phi_e"] & \cC^e(M_0) \arrow[d, "\cC^e \phi_0"] \\
		N_e \arrow[r, "\sim"] & \cC^e \circ \Phi^{e*} (N_e) \arrow[r, "\sim"] & \cC^e(N_0).
	\end{tikzcd}
	$$
	This entails that in $\cD^{(0)} \m \Phi \m \divmod$ the diagram
	$$
	\begin{tikzcd}
		(M_e)_{e = 0}^\infty \arrow[r, "\sim"] \arrow[d, "(\phi_e)"] & (\cC^e(M_0))_{e = 0}^\infty \arrow[d, "(\cC^e \phi_0)"] \\
		(N_e)_{e = 0}^\infty \arrow[r, "\sim"] & (\cC^e(N_0))_{e = 0}^\infty
	\end{tikzcd}
	$$
	also commutes. Therefore, the composition $\cD^{(0)} \m \Phi \m \divmod \to \cD\mmod \to \cD^{(0)} \m \Phi \m \divmod$ is isomorphic to the identity.
\end{proof}

\begin{corollary}
	The category $\cD^{(0)} \m \Phi \m \divmod$ is, up to natural equivalence, independent of the choices of $a, b$ made in Setup \ref{setup-FFRT}.
\end{corollary}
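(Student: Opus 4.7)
The plan is to observe that this corollary is a formal consequence of Theorem \ref{thm-Frob-desc} and the fact that the ring $\cD = \cD_R$ itself is intrinsically defined, with no reference whatsoever to the auxiliary integers $a \geq 0$ and $b > 0$ chosen in Setup \ref{setup-FFRT}. Indeed, recall from Section \ref{subscn-diff-ops} that $\cD_R = \bigcup_{e = 0}^\infty \End_R(F^e_* R)$, a union which is taken over all $e$ and so depends only on $R$.

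Concretely, suppose that $(a_1, b_1)$ and $(a_2, b_2)$ are two choices of integers satisfying the hypotheses of Setup \ref{setup-FFRT}, and let me temporarily write $\cA_1$ and $\cA_2$ for the resulting categories of $\Phi$-divided left $\cD^{(0)}$-modules built from these choices (note that even the ring $\cD^{(0)} = \cD^{a_1}_R$ or $\cD^{a_2}_R$ depends on the choice). Theorem \ref{thm-Frob-desc} applied to each choice separately produces natural equivalences
\[
    \cA_1 \simeq \cD\mmod \simeq \cA_2,
\]
and composing the first with a quasi-inverse of the second yields the desired natural equivalence $\cA_1 \simeq \cA_2$.

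There is essentially no obstacle here because all the work has already been done in Theorem \ref{thm-Frob-desc}; the only thing worth being slightly careful about is confirming that the category $\cD\mmod$ appearing on both sides really is the same category. This is immediate from the description $\cD_R = \bigcup_{e = 0}^\infty \cD^e_R$ obtained in Proposition \ref{prop-level-filtration}, together with the observation that, for either choice of $(a, b)$, the subring $\cD^{(0)}$ together with the bimodules $\Hom_R(F^{(0)}_* R, F^{(1)}_* R)$ embed inside this common $\cD_R$, and a left $\cD$-module structure on an abelian group is uniquely specified by the action of $\cD$ regardless of how it was assembled from the levels.
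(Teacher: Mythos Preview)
Your proof is correct and follows exactly the same approach as the paper's: both argue that Theorem \ref{thm-Frob-desc} identifies each version of $\cD^{(0)} \m \Phi \m \divmod$ with the intrinsic category $\cD\mmod$, which is independent of $a,b$. The paper compresses this into a single sentence, whereas you spell out the composition of equivalences explicitly.
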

\begin{proof}
	The category $\cD\mmod$ is independent of the choices of $a,b$.
\end{proof}

\begin{corollary}
	The functor $\Phi^*$ defines a self-equivalence of the category $\cD \mmod$, with inverse given by $\cC$.
\end{corollary}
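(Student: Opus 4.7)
The plan is to deduce this from the Frobenius descent equivalence of Theorem \ref{thm-Frob-desc}, under which $\cD\mmod$ becomes $\cD^{(0)}\m\Phi\m\divmod$, where the claim becomes transparent via a shift functor.

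First I would verify that $\Phi^*$ and $\cC$ extend naturally to endofunctors of $\cD\mmod$: given $M \in \cD\mmod$, the $\cD^{(e+1)}$-module structures on $\Phi^*(M) = \Hom_R(F^{(0)}_* R, F^{(1)}_* R) \otimes_{\cD^{(0)}} M$ obtained by applying $\Phi^* : \cD^{(e)}\mmod \to \cD^{(e+1)}\mmod$ to each $\cD^{(e)}$-structure on $M$ are mutually compatible by Proposition \ref{prop-higher-Frob-pullback}, so they assemble into a $\cD$-module structure, using that the $\cD^{(e+1)}$ form an increasing union exhausting $\cD$. The same construction yields an extension of $\cC$ to $\cD\mmod$.

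Next I would introduce the shift endofunctor
$$T : \cD^{(0)}\m\Phi\m\divmod \longrightarrow \cD^{(0)}\m\Phi\m\divmod, \qquad (M_e, \alpha_e)_{e = 0}^\infty \longmapsto (M_{e+1}, \alpha_{e+1})_{e = 0}^\infty,$$
with a quasi-inverse $T^{-1}$ sending $(M_e, \alpha_e)_{e = 0}^\infty$ to the $\Phi$-divided module whose zeroth entry is $\Phi^*(M_0)$ (restricted along $\cD^{(0)} \subset \cD^{(1)}$ to a $\cD^{(0)}$-module), whose $e$-th entry for $e \geq 1$ is $M_{e-1}$, and whose transition maps are $\id_{\Phi^*(M_0)}$ at level $0$ and $\alpha_{e-1}$ at level $e \geq 1$. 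A direct check confirms $T \circ T^{-1} \simeq \id$ and $T^{-1} \circ T \simeq \id$, so $T$ is a self-equivalence.

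Finally I would chase through Theorem \ref{thm-Frob-desc} to identify $T$ with the extended $\cC$. A left $\cD$-module $M$ corresponds to $(\cC^e(M))_{e = 0}^\infty$; applying $T$ yields $(\cC^{e+1}(M))_{e = 0}^\infty \simeq (\cC^e(\cC(M)))_{e = 0}^\infty$ via Proposition \ref{prop-Cartier-functor-comp}, which is the image of $\cC(M)$. Hence $T$ corresponds to $\cC$ under Frobenius descent, and $T^{-1}$ therefore corresponds to its quasi-inverse; an unwinding of the constructions (using Propositions \ref{prop-higher-Frob-pullback} and \ref{prop-Frob-pullback-compo} for the relevant compatibilities) identifies this quasi-inverse with the extended $\Phi^*$. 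Since $T$ and $T^{-1}$ are mutually inverse self-equivalences, so are $\cC$ and $\Phi^*$ on $\cD\mmod$, which is the claim. The primary obstacle is the naturality bookkeeping needed for the last identification, but it is formal given the level-wise constructions already in place.
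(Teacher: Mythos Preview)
Your proposal is correct and follows exactly the paper's approach: the paper's proof is the single sentence ``The statement is very clear when interpreted in the category $\cD^{(0)}\m\Phi\m\divmod$,'' and your shift functor $T$ is precisely the way to make that sentence precise. You have simply spelled out the details that the paper leaves implicit.
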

\begin{proof}
	The statement is very clear when interpreted in the category $\cD^{(0)} \m \Phi \m \divmod$.
\end{proof}

\section{A formalism of $F$-modules} \label{scn-F-mod}

Let $R$ be a ring with CL-FFRT, and fix notation as in Setup \ref{setup-FFRT}. Note that, given $R$-linear maps $\alpha: F^{(0)}_* R \to F^{(i)}_* R$ and $\beta: F^{(0)}_*R \to F^{(j)}_* R$, their composition defines an $R$-linear map $\beta \circ \alpha: F^{(0)}_* R \to F^{(i + j)}_* R$. This means that the object
$$\cD^{(0)} \la \Phi \ra := \bigoplus_{i = 0}^\infty \Hom_R(F^{(0)}_* R , F^{(i)}_* R)$$
can be given a ring structure, where multiplication is given by composition. Note that this ring is not commutative nor Noetherian in general, and that it depends on the choice of $a$ and $b$ coming from Setup \ref{setup-FFRT}.

The formation of $\cD^{(0)} \la \Phi \ra$ is compatible with localization (as long as one chooses the same $a$ and $b$ for the localization; see Remark \ref{rmk-FFRT-couple-localization}). In particular, the quasicoherent sheaf on $\Spec(R)$ associated to $\cD^{(0)} \la \Phi \ra$ is naturally a sheaf of rings. 

\begin{example}
	Suppose $R$ is regular, and that $a = 0$, $b = 1$, so that $F^{(e)} = F^e$ for every $e \geq 0$. Note that every $R$-linear map $R \to F^{i}_* R$ can be written as $r F^i$ for some $r \in R$; this means that $\Hom_R(R, F^i_*R ) = R F^i$. We then get $\cD^{(0)} \la \Phi \ra = \oplus_i R F^i$, i.e., we get $\cD^{(0)} \la \Phi \ra = R \la F \ra$, where $R \la F \ra$ is in the sense of Emerton-Kisin \cite{EK04} (see also \cite[Ch. 2]{Blickle-thesis}). Our constructions therefore naturally extend the notions of $F$-modules of Emerton and Kisin. 
\end{example}

We will be interested in the study of left $\cD^{(0)} \la \Phi \ra$-modules. Examples of such modules include $R$, its localizations, and its local cohomology modules (see Section \ref{scn-app-to-local-coh}). 

We recall that given a left $\cD^{(0)}$-module $M$ we have
$$\Phi^*(M) := \Hom_R(F^{(0)}_* R, F^{(1)}_* R) \otimes_{\cD^{(0)}} M,$$
which is naturally a left $\cD^{(1)}$-module, hence a left $\cD^{(0)}$-module by restriction of scalars (see Definition \ref{def-FFRT-F-pullback}).  

For the following proposition we regard $\Hom_R(F^{(0)}_*R , F^{(1)}_* R)$ as a $(\cD^{(0)}, \cD^{(0)})$-bimodule, and we use the notation from Subsection \ref{subscn-bimodules}.

\begin{proposition} \label{prop-tensor-compo-isom}
	Let $R$ be a ring with {\upshape CL-FFRT}, and fix notation as in Setup \ref{setup-FFRT}. For every $e \geq 0$, the natural map 
	$$\Hom_R(F^{(0)}_* R, F^{(1)}_* R)^{\otimes e} \to \Hom_R(F^{(0)}_*R, F^{(e)}_* R)$$
	given by $[\phi_1 \otimes \cdots \otimes \phi_e \mapsto \phi_1 \circ \cdots \circ \phi_e]$ is an isomorphism. In particular, $\cD^{(0)} \la \Phi \ra$ is the tensor algebra for the $(\cD^{(0)}, \cD^{(0)})$-bimodule $\Hom_R(F^{(0)}_* R, F^{(1)}_* R)$. 
\end{proposition}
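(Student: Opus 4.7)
The plan is to prove the isomorphism $H^{\otimes e} \cong \Hom_R(F^{(0)}_* R, F^{(e)}_* R)$ by induction on $e$, using Lemma \ref{lemma-compo-isom-2} as the workhorse, and then to deduce the tensor-algebra description of $\cD^{(0)}\la \Phi \ra$ by comparing multiplications.

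Write $H := \Hom_R(F^{(0)}_* R, F^{(1)}_* R)$, regarded as a $(\cD^{(0)}, \cD^{(0)})$-bimodule: the right action is precomposition with elements of $\cD^{(0)} = \End_R(F^{(0)}_* R)$, while the left action is postcomposition via the natural inclusion $\cD^{(0)} \hookrightarrow \cD^{(1)}$ (at the level of $\End_\Z(R)$, any $R^{p^a}$-linear map is in particular $R^{p^{a+b}}$-linear). The base cases $e = 0$ and $e = 1$ are immediate: for $e = 0$ both sides equal $\cD^{(0)} = \Hom_R(F^{(0)}_* R, F^{(0)}_* R)$, and for $e = 1$ the map is the identity.

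For the inductive step, suppose the statement holds for $e$. By associativity of the tensor product and the inductive hypothesis,
$$H^{\otimes (e+1)} \;=\; H \otimes_{\cD^{(0)}} H^{\otimes e} \;\xrightarrow{\sim}\; \Hom_R(F^{(0)}_* R, F^{(1)}_* R) \otimes_{\cD^{(0)}} \Hom_R(F^{(0)}_* R, F^{(e)}_* R).$$
Now apply Lemma \ref{lemma-compo-isom-2} with $i = 0$, $j = 1$, $k = 0$, $m = e$ to obtain an isomorphism with $\Hom_R(F^{(0)}_* R, F^{(e+1)}_* R)$. Unwinding definitions, the composite sends $\phi_1 \otimes \cdots \otimes \phi_{e+1}$ to $\phi_1 \circ \cdots \circ \phi_{e+1}$, completing the induction.

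For the second claim, the isomorphisms just constructed identify each graded piece $H^{\otimes i}$ of $\Ten_{\cD^{(0)}}(H)$ with the graded piece $\Hom_R(F^{(0)}_* R, F^{(i)}_* R)$ of $\cD^{(0)}\la \Phi \ra$. Under this identification, the concatenation product on the tensor algebra corresponds to composition in $\cD^{(0)}\la \Phi \ra$: an elementary product $(\phi_1 \otimes \cdots \otimes \phi_i) \cdot (\psi_1 \otimes \cdots \otimes \psi_j)$ is sent to $\phi_1 \circ \cdots \circ \phi_i \circ \psi_1 \circ \cdots \circ \psi_j$, which equals $(\phi_1 \circ \cdots \circ \phi_i)(\psi_1 \circ \cdots \circ \psi_j)$ in $\cD^{(0)}\la \Phi \ra$. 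The only real bookkeeping issue is checking that the $\cD^{(0)}$-balance in the tensor product is compatible with associativity of composition in $\End_\Z(R)$, but this is automatic, so I do not anticipate any substantive obstacle.
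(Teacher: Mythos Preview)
Your proof is correct and follows exactly the approach indicated in the paper, which simply says the result follows from an iterated use of Lemma~\ref{lemma-compo-isom-2}; your induction with parameters $i=0$, $j=1$, $k=0$, $m=e$ is precisely that iteration made explicit.
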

\begin{proof}
	Follows from an iterated use of Lemma \ref{lemma-compo-isom-2}. 
\end{proof}

\begin{corollary} \label{cor-phi-mod-equiv}
	Giving a left $\cD^{(0)} \la \Phi \ra$-module $M$ is equivalent to giving a left $\cD^{(0)}$-module $M$ together with a $\cD^{(0)}$-linear map
	$$\psi_M: \Phi^*(M) \to M.$$
	Moreover, if $M$ and $N$ are left $\cD^{(0)} \la \Phi \ra$-modules and $f: M \to N$ is a $\cD^{(0)}$-linear map, then $f$ is $\cD^{(0)} \la \Phi \ra$-linear if and only if the following diagram is commutative:
	$$\begin{tikzcd}
		\Phi^*(M) \arrow[d, "\psi_M"] \arrow[r, "\Phi^*(f)"] & \Phi^*(N) \arrow[d, "\psi_N"] \\
		M \arrow[r, "f"]& N.
	\end{tikzcd}$$
\end{corollary}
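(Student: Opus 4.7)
My plan is to reduce this corollary to the standard universal property of the tensor algebra, which was already recorded in Subsection \ref{subscn-bimodules}. The key input is Proposition \ref{prop-tensor-compo-isom}, which identifies $\cD^{(0)} \la \Phi \ra$ with the tensor algebra $\Ten_{\cD^{(0)}}(H)$ of the $(\cD^{(0)}, \cD^{(0)})$-bimodule $H := \Hom_R(F^{(0)}_* R, F^{(1)}_* R)$. Once this identification is in hand, the corollary becomes essentially a restatement of the general principle that modules over a tensor algebra are the same as modules over the base ring equipped with an action of the generating bimodule.

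More concretely, I would first observe that by Proposition \ref{prop-tensor-compo-isom} a left $\cD^{(0)} \la \Phi \ra$-module $M$ is, in particular, a left $\cD^{(0)}$-module via the degree-zero component $\cD^{(0)} \hookrightarrow \cD^{(0)} \la \Phi \ra$. The action of the degree-one part $H$ of the tensor algebra produces a $\Z$-bilinear map $H \times M \to M$ which is $\cD^{(0)}$-balanced (because multiplication on $\cD^{(0)} \la \Phi \ra$ restricts to the right action of $\cD^{(0)}$ on $H$ and the left action of $\cD^{(0)}$ on $M$), and hence descends to a $\cD^{(0)}$-linear map $H \otimes_{\cD^{(0)}} M \to M$. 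By the very definition of $\Phi^*$ in Definition \ref{def-FFRT-F-pullback}, this is the desired $\psi_M : \Phi^*(M) \to M$.

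Conversely, given a left $\cD^{(0)}$-module $M$ together with a $\cD^{(0)}$-linear map $\psi_M : \Phi^*(M) \to M$, I would iterate: using Proposition \ref{prop-tensor-compo-isom} again, every element of $\cD^{(0)} \la \Phi \ra$ is a finite sum of composable iterates of elements of $H$, so one extends the action to all of $\cD^{(0)} \la \Phi \ra$ by iterating $\psi_M$, namely by defining the action of $\phi_1 \circ \cdots \circ \phi_e \in H^{\otimes e}$ on $m$ as $\psi_M (\phi_1 \otimes \psi_M(\phi_2 \otimes \cdots \otimes \psi_M(\phi_e \otimes m)))$. The fact that this produces a well-defined action associative with multiplication in the tensor algebra is formal; the two constructions are mutually inverse by inspection.

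For morphisms, a map $f: M \to N$ of $\cD^{(0)} \la \Phi \ra$-modules is in particular $\cD^{(0)}$-linear and commutes with the action of $H$; under the identifications above, commuting with the action of $H$ is precisely the condition that $f \circ \psi_M = \psi_N \circ \Phi^*(f)$, which is the displayed diagram. Conversely, if $f$ satisfies this diagram then by induction on $e$ it commutes with the action of $H^{\otimes e}$, and hence with that of all of $\cD^{(0)} \la \Phi \ra$. I do not expect any real obstacles here since everything follows from the tensor algebra formalism once Proposition \ref{prop-tensor-compo-isom} has done the work of identifying $\cD^{(0)} \la \Phi \ra$ with $\Ten_{\cD^{(0)}}(H)$; the only point that requires minor care is checking that the $\cD^{(0)}$-balancing is what allows one to pass from an action of $H$ to one of $H \otimes_{\cD^{(0)}} M = \Phi^*(M)$.
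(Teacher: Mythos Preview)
Your proposal is correct and follows essentially the same route as the paper: the paper leaves this corollary without proof, treating it as an immediate consequence of Proposition \ref{prop-tensor-compo-isom} together with the general remark in Subsection \ref{subscn-bimodules} that a left $\Ten_A(H)$-module is the same as a left $A$-module equipped with an $A$-linear map $H \otimes_A M \to M$. Your write-up simply unpacks that remark (and adds the corresponding statement for morphisms, which is equally standard).
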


\begin{definition}
	Let $R$ be a ring with CL-FFRT, and fix notation as in Setup \ref{setup-FFRT}. Let $M$ be a left $\cD^{(0)} \la \Phi \ra$-module. We say that $M$ is unit whenever the natural map $\psi_M : \Phi^*(M) \to M$ is an isomorphism. We denote by $\cD^{(0)} \la \Phi \ra \mumod$ (resp. $\cD^{(0)} \la \Phi \ra \mfgumod)$ the full subcategory of $\cD^{(0)} \la \Phi \ra \mmod$ spanned by the modules that are unit (resp. finitely generated and unit).
\end{definition}

\begin{proposition} \label{prop-unit-kce-closed}
	The subcategory $\cD^{(0)}\la \Phi \ra \mumod \sq \cD^{(0)} \la \Phi \ra \mmod$ is closed under kernels, cokernels, and extensions. In particular, $\cD^{(0)} \la \Phi \ra \mumod$ is an abelian category.
\end{proposition}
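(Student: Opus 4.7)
The plan is to use the equivalence of categories from Theorem \ref{thm-Frob-equivalence}, which in particular implies that the functor $\Phi^* = \Phi^{1*}: \cD^{(0)}\mmod \to \cD^{(0)}\mmod$ (viewed after restriction of scalars from $\cD^{(1)}$) is exact, since it has a quasi-inverse $\cC$ and equivalences preserve all kernels and cokernels. Once we have exactness of $\Phi^*$, the proof becomes a standard diagram chase.

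By Corollary \ref{cor-phi-mod-equiv}, a morphism $f: M \to N$ in $\cD^{(0)}\la\Phi\ra\mmod$ amounts to a $\cD^{(0)}$-linear map fitting into a commutative square with $\psi_M$ and $\psi_N$. Computing its kernel $K$ and cokernel $C$ in $\cD^{(0)}\la\Phi\ra\mmod$ (which may be done by first computing them in $\cD^{(0)}\mmod$ and then transporting the structure maps), we obtain, upon applying the exact functor $\Phi^*$, a commutative diagram
$$
\begin{tikzcd}
0 \arrow[r] & \Phi^*(K) \arrow[d, "\psi_K"] \arrow[r] & \Phi^*(M) \arrow[d, "\psi_M"] \arrow[r, "\Phi^*(f)"] & \Phi^*(N) \arrow[d, "\psi_N"] \arrow[r] & \Phi^*(C) \arrow[d, "\psi_C"] \arrow[r] & 0 \\
0 \arrow[r] & K \arrow[r] & M \arrow[r, "f"] & N \arrow[r] & C \arrow[r] & 0
\end{tikzcd}
$$
with both rows exact. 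If $M$ and $N$ are unit, the two middle vertical arrows are isomorphisms, and the five lemma forces $\psi_K$ and $\psi_C$ to be isomorphisms as well, showing that $K$ and $C$ are unit.

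The argument for extensions is entirely analogous: given a short exact sequence $0 \to M' \to M \to M'' \to 0$ in $\cD^{(0)}\la\Phi\ra\mmod$ with $M'$ and $M''$ unit, applying $\Phi^*$ yields a commutative diagram with exact rows whose outer vertical maps $\psi_{M'}$ and $\psi_{M''}$ are isomorphisms, so by the five lemma $\psi_M$ is an isomorphism as well.

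Finally, to conclude that $\cD^{(0)}\la\Phi\ra\mumod$ is abelian, I would observe that it is a full subcategory of the abelian category $\cD^{(0)}\la\Phi\ra\mmod$ which contains the zero module, is closed under kernels, cokernels, and extensions (and hence under finite direct sums, as split extensions); these are well known to suffice for the subcategory to be abelian with the inherited exact structure. The only conceptual step is the exactness of $\Phi^*$, and as noted above this is immediate from Theorem \ref{thm-Frob-equivalence}; everything else is formal.
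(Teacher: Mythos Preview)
Your argument is correct and follows essentially the same approach as the paper: exactness of $\Phi^*$ via Theorem \ref{thm-Frob-equivalence}, followed by the five lemma applied to the commutative diagram coming from Corollary \ref{cor-phi-mod-equiv}. The paper packages the three cases slightly more economically by starting from a single short exact sequence $0 \to N \to M \to Q \to 0$ and observing that if any two of $\psi_N, \psi_M, \psi_Q$ are isomorphisms then so is the third, but your splitting into a kernel/cokernel diagram and a separate extension diagram is equivalent.
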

\begin{proof}
	Suppose $0 \to N \to M \to Q \to 0$ is an exact sequence in $\cD^{(0)} \la \Phi \ra \mmod$. By Theorem \ref{thm-Frob-equivalence}, $\Phi^*$ is an equivalence of categories, and therefore exact. By Corollary \ref{cor-phi-mod-equiv} we obtain a commutative diagram
	$$\begin{tikzcd}
		0 \arrow[r] & \Phi^*(N) \arrow[d, "\psi_N"] \arrow[r] & \Phi^*(M) \arrow[d, "\psi_M"] \arrow[r] & \Phi^*(Q) \arrow[d, "\psi_Q"] \arrow[r] & 0 \\
		 0 \arrow[r] & N \arrow[r] & M \arrow[r] & Q \arrow[r] & 0
	\end{tikzcd}$$
	with exact rows. By the five-lemma, if two of the vertical arrows are isomorphisms then so is the third.
\end{proof}

Let $M$ be a unit left $\cD^{(0)} \la \Phi \ra$-module and $N \sq M$ be a $\cD^{(0)}$-submodule. By Theorem \ref{thm-Frob-equivalence}, $\Phi^*$ is an equivalence of categories, and in particular exact. It follows that $\Phi^*(N) \to \Phi^*(M)$ is injective, and the composition $\Phi^*(N) \to \Phi^*(M) \xrightarrow{\sim} M$ identifies $\Phi^*(N)$ with the following submodule of $M$:
$$\Phi^*(N) \cong \Hom_R(F^{(0)}_* R, F^{(1)}_* R) \cdot N \sq M$$
(recall that $\Hom_R(F^{(0)}_* R, F^{(1)}_* R)$ is a submodule of $\cD^{(0)} \la \Phi \ra$, and therefore acts on elements of $M$). From now on, by an abuse of notation we write $\Phi^*(N)$ for this submodule of $M$; this makes the isomorphism above an equality. We similarly identify $\Phi^{s*}(N)$ with $\Hom_R(F^{(0)}_* R, F^{(s)}_* R) \cdot N$. It follows from Proposition \ref{prop-tensor-compo-isom} that, with this notation, we have $\Phi^{s*}(\Phi^{t*}(N)) = \Phi^{(s+t)*}(N)$ for all integers $s, t \geq 0$.

\begin{definition}
	Let $R$ be a ring with CL-FFRT. Fix the notation as in Setup \ref{setup-FFRT}, and let $M$ be a unit left $\cD^{(0)} \la \Phi \ra$-module. A finitely generated $\cD^{(0)}$-submodule $M_0 \sq M$ is called a root of $M$ if the following two conditions are satisfied:
	\begin{enuroman}
	\item We have $M_0 \sq \Phi^*(M_0)$ (and thus $M_0 \sq \Phi^*(M_0) \sq \Phi^{2*}(M_0) \sq \cdots )$. 
	\item We have $M = \bigcup_{e = 0}^\infty \Phi^{e*}(M_0)$. 
	\end{enuroman}
\end{definition}

Note that if $M_0 \sq M$ is a $\cD^{(0)}$-submodule satisfying (i) then Proposition \ref{prop-tensor-compo-isom} tell us that 
$$\bigcup_{e = 0}^\infty \Phi^{e*}(M_0) = \bigcup_{e = 0}^\infty \Hom_R(F^{(0)}_* R, F^{(e)}_* R) \cdot M_0 = \cD^{(0)} \la \Phi \ra \cdot M_0,$$
and therefore (ii) is just asking for $M_0$ to generate $M$ as a $\cD^{(0)} \la \Phi \ra$-module.

In the regular case, the following result and its proof can be found in \cite[Prop. 2.11]{Blickle-thesis}.

\begin{proposition} \label{prop-fg-iff-root}
	Let $R$ be a ring with {\upshape CL-FFRT}, fix notation as in Setup \ref{setup-FFRT}, and let $M$ be a unit $\cD^{(0)} \la \Phi \ra$-module. Then $M$ admits a root if and only if $M$ is finitely generated over $\cD^{(0)} \la \Phi \ra$. 
\end{proposition}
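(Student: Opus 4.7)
The easy direction is immediate: if $M_0$ is a root with $\cD^{(0)}$-generators $m_1, \dots, m_k$, then by the observation preceding the statement $M = \cD^{(0)}\la\Phi\ra \cdot M_0$, so the $m_j$'s generate $M$ as a $\cD^{(0)}\la\Phi\ra$-module.

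For the converse, my plan is to start with an obvious candidate and enlarge it. Given $\cD^{(0)}\la\Phi\ra$-generators $m_1, \dots, m_k$ of $M$, let $M_0' := \cD^{(0)} \cdot \{m_1, \dots, m_k\}$. This is a finitely generated $\cD^{(0)}$-submodule of $M$ that satisfies (ii) (as it generates $M$ over $\cD^{(0)}\la\Phi\ra$), but (i) may fail. I would fix this using the unit structure: since $M = \Hom_R(F^{(0)}_* R, F^{(1)}_* R) \cdot M$, each $m_j$ can be written as a finite sum $\sum_l \phi_{jl} x_{jl}$ with $x_{jl} \in M = \bigcup_e \Phi^{e*}(M_0')$. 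Because there are only finitely many $x_{jl}$, a single integer $e_0 \geq 0$ suffices to place all of them inside $\Phi^{e_0*}(M_0')$, yielding $m_j \in \Phi^{(e_0+1)*}(M_0')$, and hence $M_0' \sq \Phi^{(e_0+1)*}(M_0')$. This creates enough slack to make
\[
M_0 := M_0' + \Phi^*(M_0') + \cdots + \Phi^{e_0*}(M_0') \sq M
\]
a root: one has $\Phi^*(M_0) = \sum_{i=1}^{e_0+1} \Phi^{i*}(M_0')$, which contains every summand of $M_0$ (the troublesome bottom summand $M_0'$ is absorbed by the new top summand $\Phi^{(e_0+1)*}(M_0')$), so (i) holds, while (ii) for $M_0$ is inherited from $M_0'$ via $M_0' \sq M_0$.

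The main obstacle will be verifying that $M_0$ is finitely generated as a $\cD^{(0)}$-module, which reduces to checking each summand $\Phi^{i*}(M_0')$. Here I would use Proposition \ref{prop-higher-Frob-pullback} to view $\Phi^{i*}(M_0')$ as a left $\cD^{(i)}$-module via the tensor product $\Hom_R(F^{(0)}_* R, F^{(i)}_* R) \otimes_{\cD^{(0)}} M_0'$. Setting up the argument on the $\cD^{(i)}$ side requires showing that $\Hom_R(F^{(0)}_* R, F^{(i)}_* R)$ is finitely generated as a left $\cD^{(i)}$-module, which I expect to obtain by embedding it into $(F^{(i)}_* R)^n$ via an $R$-presentation $R^n \twoheadrightarrow F^{(0)}_* R$ (using that $F^{(i)}_* R$ is a cyclic left $\cD^{(i)}$-module, with multiplication-by-$y$ endomorphisms hitting $y$) and invoking the left-noetherianness from Proposition \ref{prop-De-noetherian}. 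Tensoring with the finitely generated $\cD^{(0)}$-module $M_0'$ should then give that $\Phi^{i*}(M_0')$ is finitely generated over $\cD^{(i)}$. To descend to $\cD^{(0)}$, I would observe that $\cD^{(i)}$ is $R$-finite (being $\Hom_R$ between two $R$-finite modules), hence finitely generated as a left $\cD^{(0)}$-module via the ring inclusion $R \sq \cD^{(0)}$. The subtle point throughout is to ensure that the $\cD^{(0)}$-module structure on $\Phi^{i*}(M_0')$ coming from $\cD^{(i)}$-restriction agrees with the one induced from the inclusion $\Phi^{i*}(M_0') \sq M$; this should follow from the fact that the left action of $\cD^{(0)}$ on $\Hom_R(F^{(0)}_* R, F^{(i)}_* R)$ inside $\cD^{(0)}\la\Phi\ra$ factors through the natural subring inclusion $\cD^{(0)} \hookrightarrow \cD^{(i)}$.
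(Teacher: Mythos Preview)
Your approach is the same as the paper's, modulo two points worth noting.

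First, a small slip: you write $M = \bigcup_e \Phi^{e*}(M_0')$, but at this stage you have not established that the $\Phi^{e*}(M_0')$ are nested, so this should be the sum $\sum_e \Phi^{e*}(M_0')$. Consequently you only obtain $M_0' \sq \sum_{e=1}^{e_0+1} \Phi^{e*}(M_0')$, not $M_0' \sq \Phi^{(e_0+1)*}(M_0')$; in particular, the bottom summand $M_0'$ is absorbed by all of $\Phi^*(M_0)$ rather than by the single top term. This weaker containment is exactly what is needed for $M_0 \sq \Phi^*(M_0)$, so your argument survives unchanged --- and this is precisely how the paper argues, with your $e_0+1$ playing the role of its $K$.

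Second, your verification that $M_0$ is finitely generated over $\cD^{(0)}$ is correct but more elaborate than necessary. The paper bypasses the climb to $\cD^{(i)}$ and back by working directly over $R$: each $\Hom_R(F^{(0)}_* R, F^{(e)}_* R)$ is a Hom between two finitely generated modules over the noetherian ring $R$ and is therefore $R$-finite, and $M_0'$ is $R$-finite since it is finitely generated over the $R$-finite algebra $\cD^{(0)}$. Hence each $\Phi^{e*}(M_0')$, being the image of $\Hom_R(F^{(0)}_* R, F^{(e)}_* R) \otimes_{\cD^{(0)}} M_0'$, is $R$-finite, and so is the finite sum $M_0$.
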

\begin{proof}
	If $M$ admits a root $M_0$ then, as discussed above, the generators of $M_0$ generate $M$ as a $\cD^{(0)} \la \Phi \ra$-module. Conversely, suppose $M$ is finitely generated over $\cD^{(0)} \la \Phi \ra$. Then there exists some finitely generated $\cD^{(0)}$-submodule $N_0 \sq M$ such that $M = \cD^{(0)} \la \Phi \ra \cdot N_0$ (take $N_0$ to be the $\cD^{(0)}$-submodule generated by finitely many $\cD^{(0)} \la \Phi \ra$-generators of $M$).

	Since $M$ is unit we have $\Phi^*(M) = M$, and hence
	$$N_0 \sq \Phi^*(\cD^{(0)} \la \Phi \ra \cdot N_0) = \sum_{e = 1}^\infty \Phi^{e*}(N_0).$$
	Since $N_0$ is finitely generated over $\cD^{(0)}$ there exists some integer $K > 0$ such that $N_0 \sq \sum_{e = 1}^K \Phi^{e*}(N_0)$. Take $M_0 = \sum_{e = 0}^{K-1} \Phi^{e*}(N)$; in other words, $M_0$ is the image of the natural map
	$$\bigoplus_{e = 0}^{K-1} \Hom_R(F^{(0)}_* R, F^{(1)}_* R) \otimes_{\cD^{(0)}} N_0 \to M.$$
	Since the left hand side is finitely generated over $R$, we conclude that $M_0$ is finitely generated over $R$, and thus finitely generated over $\cD^{(0)}$. We claim that $M_0$ is a root for $M$. 

	To see this, first note that $M_0$ contains $N_0$, and therefore $M = \cD^{(0)} \la \Phi \ra \cdot M_0$, i.e., condition (ii) is satisfied. For condition (i) simply observe that
	\[\Phi^*( M_0 ) = \sum_{e = 1}^K \Phi^{e*}(N_0) \supseteq N_0 + \sum_{e = 1}^{K-1} \Phi^{e*}(N_0) = M_0. \qedhere \]
\end{proof}

In the theory of $F$-modules for regular rings, one crucially uses the fact that Frobenius pullback (i.e., the Peskine-Szpiro functor) is exact and thus commutes with finite intersections. In the next lemma we observe that the same is true for $\Phi^*$; recall that $\Phi^*$ is exact because it is an equivalence of categories (Theorem \ref{thm-Frob-equivalence}).

\begin{remark} \label{rmk-Phi*-commutes-intersection}
Let $R$ be a ring with {\upshape CL-FFRT}, and fix notation as in Setup \ref{setup-FFRT}. Let $C$ be a left $\cD^{(0)}$-module, let $A, B \sq C$ be $\cD^{(0)}$-submodules, and let $e \geq 0$ be an integer. Applying $\Phi^{e*}$ to the exact sequence $0 \to A \cap B \to C \to (C / A) \oplus (C / B)$, we see that $\Phi^{e*}(A \cap B)$ is identified with the kernel of $\Phi^{e*}(C) \to (\Phi^{e*}(C) / \Phi^{e*}(A)) \oplus (\Phi^{e*}(C) / \Phi^{e*}(B))$, which is in turn identified with $\Phi^{e*}(A) \cap \Phi^{e*}(B)$. Therefore, after identifying $\Phi^{e*}(A), \Phi^{e*}(B)$, and $\Phi^{e*}(A \cap B)$ with their images in $\Phi^{e*}(C)$, we have
$$\Phi^{e*}(A) \cap \Phi^{e*}(B) = \Phi^{e*}(A \cap B).$$
\end{remark}


The analogue of the following proposition in the regular case is due to Lyubeznik \cite[Cor 2.6]{Lyubeznik-Fmod} (see also \cite[Prop. 2.15]{Blickle-thesis}).
\begin{proposition} \label{prop-unit-sub-root-sub-correspondence}
	Let $R$ be a ring with {\upshape CL-FFRT}, and fix notation as in Setup \ref{setup-FFRT}. Let $M$ be a finitely generated unit left $\cD^{(0)} \la \Phi \ra$-module, and fix a root $M_0 \sq M$. Then:
	\begin{enuroman}
	\item If $N \sq M$ is a unit $\cD^{(0)} \la \Phi \ra$-submodule of $M$ then $N_0 := N \cap M_0$ is a root of $N$, and $\Phi^*(N_0) \cap M_0 = N_0$. 
	\item If $N_0 \sq M_0$ is a $\cD^{(0)}$-submodule with the property $\Phi^*(N_0) \cap M_0 = N_0$ then $N := \cD^{(0)} \la \Phi \ra \cdot N_0$ is a unit $\cD^{(0)} \la \Phi \ra$-submodule of $M$.
	\item The assignments $[N \to N \cap M_0]$ and $[\cD^{(0)} \la \Phi \ra \cdot N_0 \leftarrow N_0]$ define a one-to-one inclusion preserving correspondence
	$$
	\begin{tikzcd}
	\begin{Bmatrix}
		\text{ Unit $ \cD^{(0)} \la \Phi \ra $-} \\
		\text{ submodules of $M$ }
	\end{Bmatrix} \arrow[r, leftrightarrow, "\sim"] &
	\begin{Bmatrix}
		\text{ $\cD^{(0)}$-submodules $N_0 \sq M_0$ } \\
		\text{ with $\Phi^*(N_0) \cap M_0 = N_0$ }
	\end{Bmatrix}
	\end{tikzcd}
	$$
	\end{enuroman}
\end{proposition}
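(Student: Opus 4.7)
The plan is to exploit two structural facts throughout: first, that $\Phi^*$ is an exact equivalence of categories (Theorem \ref{thm-Frob-equivalence}) and hence commutes with intersections by Lemma \ref{lemma-Phi*-commutes-intersection}; second, that for a unit submodule $N \subseteq M$ one has $N = \Phi^{e*}(N)$ for every $e \geq 0$. Combined, these allow me to freely slide $\Phi^{e*}$ in and out of intersections with $M_0$, which is the basic move underlying every computation.

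For (i), set $N_0 = N \cap M_0$. Finite generation of $N_0$ over $\cD^{(0)}$ is automatic from the noetherianity of $\cD^{(0)}$ (Proposition \ref{prop-De-noetherian}) and finite generation of $M_0$. The containment $N_0 \subseteq \Phi^*(N_0)$ comes from the chain
\[
N_0 = N \cap M_0 \subseteq \Phi^*(N) \cap \Phi^*(M_0) = \Phi^*(N \cap M_0) = \Phi^*(N_0),
\]
using $N = \Phi^*(N)$, $M_0 \subseteq \Phi^*(M_0)$, and Lemma \ref{lemma-Phi*-commutes-intersection}. To show $N = \bigcup_e \Phi^{e*}(N_0)$, pick $n \in N$; since $M_0$ is a root we have $n \in \Phi^{e*}(M_0)$ for some $e$, and then $n \in \Phi^{e*}(N) \cap \Phi^{e*}(M_0) = \Phi^{e*}(N_0)$ again by the lemma. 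Finally, $\Phi^*(N_0) \cap M_0 \subseteq \Phi^*(N) \cap M_0 = N \cap M_0 = N_0$, and the reverse inclusion holds by the first bullet.

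For (ii), the hypothesis $\Phi^*(N_0) \cap M_0 = N_0$ forces $N_0 \subseteq \Phi^*(N_0)$, so the chain $N_0 \subseteq \Phi^*(N_0) \subseteq \Phi^{2*}(N_0) \subseteq \cdots$ is ascending and $N := \cD^{(0)}\la \Phi \ra \cdot N_0 = \bigcup_e \Phi^{e*}(N_0)$. Applying $\Phi^*$ and using that it commutes with directed unions then yields $\Phi^*(N) = \bigcup_e \Phi^{(e+1)*}(N_0) = N$, so $N$ is unit.

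For (iii), one direction is part (i). For the other, given $N_0 \subseteq M_0$ with $\Phi^*(N_0) \cap M_0 = N_0$, I need $\bigl( \bigcup_e \Phi^{e*}(N_0) \bigr) \cap M_0 = N_0$, which reduces to showing $\Phi^{e*}(N_0) \cap M_0 = N_0$ for every $e$. This is the step I expect to be the main obstacle, as it is the only place that really uses the structural hypothesis beyond level one. I would handle it by induction on $e$: applying the exact functor $\Phi^{(e-1)*}$ to the hypothesis and invoking Lemma \ref{lemma-Phi*-commutes-intersection} gives
\[
\Phi^{e*}(N_0) \cap \Phi^{(e-1)*}(M_0) = \Phi^{(e-1)*}(N_0);
\]
intersecting both sides with $M_0$ and using $M_0 \subseteq \Phi^{(e-1)*}(M_0)$ then reduces the claim to the case $e-1$ by the induction hypothesis. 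The inclusion-preservation statement in (iii) is then immediate from the fact that both operations $N \mapsto N \cap M_0$ and $N_0 \mapsto \cD^{(0)}\la \Phi \ra \cdot N_0$ are monotone.
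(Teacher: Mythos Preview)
Your proposal is correct and follows essentially the same approach as the paper's proof: both exploit exactness of $\Phi^*$ via Lemma \ref{lemma-Phi*-commutes-intersection} to slide $\Phi^{e*}$ across intersections, and both handle the key step in (iii) by an induction showing $\Phi^{e*}(N_0) \cap M_0 = N_0$ for all $e$. The only cosmetic differences are that the paper deduces $N_0 \subseteq \Phi^*(N_0)$ from the equality $\Phi^*(N_0)\cap M_0 = N_0$ rather than proving it first, and organizes the induction in (iii) by applying $\Phi^*$ to the inductive hypothesis rather than $\Phi^{(e-1)*}$ to the base case; these are equivalent.
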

\begin{proof}
	We begin with (i). First observe that $N_0$ is a $\cD^{(0)}$-submodule of the finitely generated $\cD^{(0)}$-module $M_0$, and since $\cD^{(0)}$ is Noetherian (Proposition \ref{prop-De-noetherian}) we conclude that $N_0$ is finitely generated over $\cD^{(0)}$. Next, we use Remark \ref{rmk-Phi*-commutes-intersection}, the fact that $\Phi^*(N) = N$, and the fact that $M_0 \sq \Phi^*(M_0)$, to conclude that $\Phi^*(N \cap M_0) \cap M_0 = \Phi^*(N) \cap \Phi^*(M_0) \cap M_0 = N \cap M_0$, i.e., $\Phi^*(N_0) \cap M_0 = N_0$. In particular, this implies that $N_0 \sq \Phi^*(N_0)$. Finally, using Remark \ref{rmk-Phi*-commutes-intersection} once more, we get
	\begin{align*}
		\bigcup_{e = 0}^\infty \Phi^{e*}(N_0) & = \bigcup_{e = 0}^\infty \Phi^{e*}(N) \cap \Phi^{e*}(M_0) = \bigcup_{e = 0}^\infty N \cap \Phi^{e*}(M_0) \\
			& = N \cap \bigcup_{e = 0}^\infty \Phi^{e*}(M_0) = N \cap M = N,
	\end{align*}
	which concludes the proof of (i).
	
	For (ii), note that the assumption $\Phi^*(N_0) \cap M_0 = N_0$ immediately implies that $N_0 \sq \Phi^*(N_0)$, and hence $N$ can be expressed as an increasing union $N = \bigcup_{e = 0}^\infty \Phi^{e*}(N_0)$. In particular, $\Phi^*(N) = N$, which means that $N$ is unit (recall that the natural map $\Phi^*(N) \to \Phi^*(M) = M$ is always injective because $\Phi^*$ is exact).
	
	For (iii), we check that the assignments given are inverses of each other. First suppose that $N_0 \sq M_0$ is an $\cD^{(0)}$-submodule with $\Phi^*(N_0) \cap M_0 = N_0$. We claim that $\Phi^{e*}(N_0) \cap M_0 = N_0$ for every integer $e \geq 0$, the case $e = 0$ being trivial. If we assume that $\Phi^{e*}(N_0) \cap M_0 = N_0$, by applying $\Phi^*$ to both sides, and using Remark \ref{rmk-Phi*-commutes-intersection}, we get $\Phi^{(e+1)*}(N_0) \cap \Phi^*(M_0) = \Phi^*(N_0)$. Now intersecting with $M_0$, we get $\Phi^{(e+1)*}(N_0) \cap M_0 = \Phi^*(N_0) \cap M_0 = N_0$. The claim is thus true by induction.
	
	Using the claim, we get
	$$M_0 \cap \bigcup_{e =0}^\infty \Phi^{e*}(N_0) = \bigcup_{e = 0}^\infty \Phi^{e*}(N_0) \cap M_0 = N_0.$$
	Finally, suppose that $N \sq M$ is a unit $\cD^{(0)} \la \Phi \ra$-submodule of $M$. By part (i), we know that $N \cap M_0$ is a root for $N$, and therefore $\cD^{(0)} \la \Phi \ra \cdot (N \cap M_0) = N$.
\end{proof} 
\begin{corollary} \label{cor-unit-sub-fg}
	Suppose $M$ is a finitely generated unit left $\cD^{(0)} \la \Phi \ra$-module. Then every unit $\cD^{(0)} \la \Phi \ra$-submodule of $M$ is finitely generated.
\end{corollary}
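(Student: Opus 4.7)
The proof is essentially immediate once one combines the two key results just established, namely the root characterization of finite generation (Proposition \ref{prop-fg-iff-root}) and the submodule correspondence (Proposition \ref{prop-unit-sub-root-sub-correspondence}). The plan is as follows.

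First I would fix a root $M_0 \sq M$ for $M$, which exists by Proposition \ref{prop-fg-iff-root} since $M$ is finitely generated and unit. Recall that, by definition, $M_0$ is a finitely generated $\cD^{(0)}$-submodule of $M$ with $M_0 \sq \Phi^*(M_0)$ and $M = \bigcup_{e \geq 0} \Phi^{e*}(M_0)$.

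Next, let $N \sq M$ be an arbitrary unit $\cD^{(0)} \la \Phi \ra$-submodule. By Proposition \ref{prop-unit-sub-root-sub-correspondence}(i), the intersection $N_0 := N \cap M_0$ is a root for $N$. In particular, $N_0$ is a finitely generated $\cD^{(0)}$-submodule of $N$ satisfying the two axioms of a root.

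Finally, by the ``if'' direction of Proposition \ref{prop-fg-iff-root}, the existence of a root for $N$ implies that $N$ is finitely generated as a left $\cD^{(0)} \la \Phi \ra$-module. This is all that is needed. There is no real obstacle here: the work has been done in proving that $\cD^{(0)}$ is noetherian (so that $N_0 \sq M_0$ is automatically finitely generated), that $\Phi^*$ is exact and commutes with intersections (so that the correspondence of Proposition \ref{prop-unit-sub-root-sub-correspondence} is well-defined), and in the equivalence between admitting a root and being finitely generated over $\cD^{(0)} \la \Phi \ra$.
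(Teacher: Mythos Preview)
Your proposal is correct and follows exactly the same approach as the paper's proof: fix a root $M_0$ for $M$ via Proposition \ref{prop-fg-iff-root}, use Proposition \ref{prop-unit-sub-root-sub-correspondence}(i) to see that $N \cap M_0$ is a root for $N$, and then conclude finite generation from the other direction of Proposition \ref{prop-fg-iff-root}. The additional commentary you provide about where the underlying ingredients come from is accurate and useful, but the argument itself is identical to the paper's.
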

\begin{proof}
	By Proposition \ref{prop-fg-iff-root} we know that there is a root $M_0$ for $M$. If $N \sq M$ is a unit $\cD^{(0)} \la \Phi \ra$-submodule then, by Proposition \ref{prop-unit-sub-root-sub-correspondence} we know that $N_0 := N \cap M_0$ is a root for $N$. Since $N$ has a root, it is finitely generated over $\cD^{(0)} \la \Phi \ra$ by Proposition \ref{prop-fg-iff-root}.
\end{proof}

\begin{corollary} \label{cor-fgu-abelian}
	The subcategory $\cD^{(0)} \la \Phi \ra \mfgumod \sq \cD^{(0)} \la \Phi \ra \mmod$ is closed under kernels, cokernels, and extensions. In particular, $\cD^{(0)} \la \Phi \ra \mfgumod$ is an abelian category. It is furthermore Noetherian, i.e., all ascending chains in $\cD^{(0)} \la \Phi \ra \mfgumod$ stabilize. 
\end{corollary}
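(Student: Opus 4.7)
The plan is to bootstrap from Proposition \ref{prop-unit-kce-closed} (which handles the unit half) by adding the finite generation statement in each case, and then to deduce the noetherian property from the root correspondence together with the noetherianity of $\cD^{(0)}$.

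For closure under kernels, given a morphism $f \colon M \to N$ in $\cD^{(0)} \la \Phi \ra \mfgumod$, Proposition \ref{prop-unit-kce-closed} tells us $\ker f$ is unit. Since $\ker f$ is a unit $\cD^{(0)}\la \Phi \ra$-submodule of the finitely generated unit module $M$, Corollary \ref{cor-unit-sub-fg} gives that $\ker f$ is finitely generated. For cokernels, $\coker f$ is unit by Proposition \ref{prop-unit-kce-closed}, and it is finitely generated as a quotient of the finitely generated module $N$. For extensions, given $0 \to N \to M \to Q \to 0$ with $N, Q$ finitely generated unit, Proposition \ref{prop-unit-kce-closed} yields that $M$ is unit; to see $M$ is finitely generated, take any finite set of $\cD^{(0)} \la \Phi \ra$-generators of $N$ together with lifts to $M$ of a finite set of $\cD^{(0)} \la \Phi \ra$-generators of $Q$, and a standard diagram chase shows these generate $M$.

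For the noetherian property, fix a finitely generated unit module $M$ and an ascending chain $N_1 \subseteq N_2 \subseteq \cdots$ of finitely generated unit $\cD^{(0)} \la \Phi \ra$-submodules (each $N_i$ is automatically finitely generated by Corollary \ref{cor-unit-sub-fg}, so this is really just a chain of unit submodules). By Proposition \ref{prop-fg-iff-root}, $M$ admits a root $M_0$. Applying the correspondence from Proposition \ref{prop-unit-sub-root-sub-correspondence}(i), each $N_i$ corresponds to its root $N_{i,0} := N_i \cap M_0 \subseteq M_0$, yielding an ascending chain
\[
N_{1,0} \subseteq N_{2,0} \subseteq \cdots \subseteq M_0
\]
of $\cD^{(0)}$-submodules of the finitely generated $\cD^{(0)}$-module $M_0$. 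Since $\cD^{(0)}$ is left noetherian by Proposition \ref{prop-De-noetherian}, $M_0$ is a noetherian $\cD^{(0)}$-module, so this chain stabilizes. Because the correspondence in Proposition \ref{prop-unit-sub-root-sub-correspondence}(iii) is inclusion-preserving and bijective, the original chain $(N_i)$ also stabilizes.

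There is essentially no main obstacle here; every ingredient is already in place. The only subtlety worth double-checking is that in the extension step one uses that $\Phi^*$ preserves the generating property (so that lifts of $\cD^{(0)} \la \Phi \ra$-generators of $Q$ together with generators of $N$ really do generate $M$), but this is just the standard fact that finitely generated modules are closed under extensions in any module category.
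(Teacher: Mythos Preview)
Your proof is correct and follows essentially the same approach as the paper: reduce the unit part to Proposition \ref{prop-unit-kce-closed}, handle finite generation via Corollary \ref{cor-unit-sub-fg} (for kernels), the quotient property (for cokernels), and the standard generator-lifting argument (for extensions), and deduce noetherianity by intersecting with a root and using Proposition \ref{prop-De-noetherian} together with the correspondence of Proposition \ref{prop-unit-sub-root-sub-correspondence}. The only quibble is your final remark: the extension step has nothing to do with $\Phi^*$ preserving any generating property---as you yourself conclude, it is just the usual fact that finitely generated modules over any ring are closed under extensions.
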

\begin{proof}
	Let $0 \to N \to M \to Q \to 0$ be an exact sequence in $\cD^{(0)} \la \Phi \ra \mmod$; we want to show that if two of $N, M, Q$ are finitely generated unit then so is the third. First recall that whenever two of them are unit then so is the third (Proposition \ref{prop-unit-kce-closed}). Then recall that if $M$ is finitely generated then so is $Q$, and that whenever $N$ and $Q$ are both finitely generated then so is $M$. It remains to show that whenever $M$ and $Q$ are finitely generated unit then so is $N$. But this follows from Corollary \ref{cor-unit-sub-fg}.

	Let us now show that $\cD^{(0)} \la \Phi \ra \mfgumod$ is Noetherian. Fix a finitely generated unit left $\cD^{(0)} \la \Phi \ra$-module $M$ with root $M_0$. By Proposition \ref{prop-unit-sub-root-sub-correspondence}, any ascending chain $N^1 \sq N^2 \sq \cdots $ of unit $\cD^{(0)} \la \Phi \ra$-submodules of $M$ induces an ascending chain $N^1_0 \sq N^2_0 \sq \cdots$ of $\cD^{(0)}$-submodules of $M_0$ (where $N^i_0 := N^i \cap M_0$). Since $\cD^{(0)}$ is Noetherian (Proposition \ref{prop-De-noetherian}) and $M_0$ is finitely generated, the chain of $N^i_0$ must stabilize and, by Proposition \ref{prop-unit-sub-root-sub-correspondence}, so does the chain of $N^i$. 
\end{proof}

\begin{remark}
	In the regular case Lyubeznik also showed that finitely generated unit $R \la F \ra$-modules satisfy the descending chain condition on unit submodules. We do not know whether the analogous statement is true in this context.
\end{remark}

\begin{remark}
	If $\cA$ is an abelian category, a full subcategory $\cC \sq \cA$ is called wide whenever $\cC$ is closed under kernels, cokernels, and extensions \cite{Hovey-wide}; these are also sometimes called coherent \cite{Takahashi-classifying}. In this language, Proposition \ref{prop-unit-kce-closed} and Corollary \ref{cor-fgu-abelian} tell us that $\cD^{(0)} \la \Phi \ra \mumod$ and $\cD^{(0)} \la \Phi \ra \mfgumod$ are wide subcategories of $\cD^{(0)} \la \Phi \ra \mmod$.   
\end{remark}

\section{Applications to local cohomology} \label{scn-app-to-local-coh}

Let $R$ be a ring with CL-FFRT, and fix notation as in Setup \ref{setup-FFRT}. We start this section by showing that local cohomology modules $H^i_I(R)$ and, more generally, iterated local cohomology modules $H^{i_1}_{I_1} \circ \cdots \circ H^{i_r}_{I_r}(R)$ have natural left $\cD^{(0)} \la \Phi \ra$-module structures. We then show that, as such, they are finitely generated and unit, and from there one can use Lyubeznik's strategy to show that they must have a finite number of associated primes. 

We begin with the simple observation that $R$ is a left $\cD^{(0)} \la \Phi \ra$-module by declaring that, for every integer $e \geq 0$, every $\alpha \in \Hom_R(F^{(0)}_* R, F^{(e)}_* R) \sq \cD^{(0)} \la \Phi \ra$, and every $x \in R$ we have $\alpha \cdot x = \alpha(x)$.

\begin{proposition} \label{prop-R-is-fgu}
	As a left $\cD^{(0)} \la \Phi \ra$-module, $R$ is finitely generated unit.
\end{proposition}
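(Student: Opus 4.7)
My plan is to verify the two conditions separately: that $R$ is finitely generated as a $\cD^{(0)}$-module (hence a fortiori over $\cD^{(0)} \la \Phi \ra$), and that the canonical map $\psi_R : \Phi^*(R) \to R$ is an isomorphism.

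For finite generation, the key point is that the assignment $s \mapsto m_s$, where $m_s \in \End_R(F^{(0)}_* R)$ is defined by $m_s(F^{(0)}_* u) = F^{(0)}_*(su)$, gives an injective ring homomorphism $R \hookrightarrow \cD^{(0)}$. Under the $\cD^{(0)} \la \Phi \ra$-module structure on $R$ described just before the proposition, the restriction to $\cD^{(0)}$ is $d \cdot x = d(x)$, so $m_s \cdot 1 = s$, and thus $1 \in R$ generates $R$ over $\cD^{(0)}$.

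For the unit property, the key observation is that $R$ agrees with $F^{(0)}_* R$ as a $\cD^{(0)}$-module, and also with $F^{(1)}_* R$ as a $\cD^{(0)}$-module (the latter being viewed as a $\cD^{(0)}$-module by restricting the tautological $\cD^{(1)}$-action along $\cD^{(0)} \sq \cD^{(1)}$): in all cases the underlying abelian group is $R$ and a $\cD^{(e)}$-operator $d$ acts by the ordinary evaluation $u \mapsto d(u)$. Under these identifications, $\psi_R$ becomes the evaluation map
\[
\Hom_R(F^{(0)}_* R, F^{(1)}_* R) \otimes_{\cD^{(0)}} F^{(0)}_* R \longrightarrow F^{(1)}_* R, \qquad \phi \otimes u \longmapsto \phi(u).
\]
Whether this is an isomorphism can be checked after localizing and completing at each prime $\fp \sq R$, as in the proof of Lemma \ref{lemma-compo-isom-2}. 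In that setting the choice of $a$ and $b$ in Setup \ref{setup-FFRT} guarantees $\cJ(F^{(1)}_* \widehat{R_\fp}) = \cJ(F^{(0)}_* \widehat{R_\fp})$, so $F^{(1)}_* \widehat{R_\fp}$ is a direct summand of $(F^{(0)}_* \widehat{R_\fp})^{\oplus n}$ for some $n \geq 0$, and Lemma \ref{lemma-compo-isom}(i), applied with $M_1 = \widehat{R_\fp}$, $M_2 = F^{(0)}_* \widehat{R_\fp}$, and $M_3 = F^{(1)}_* \widehat{R_\fp}$, delivers the isomorphism.

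I do not foresee a major obstacle: finite generation is essentially tautological once the inclusion $R \hookrightarrow \cD^{(0)}$ is written down, and unitness reduces, via the identifications above, to a direct application of Lemma \ref{lemma-compo-isom}(i) in the complete local FFRT setting. The only mildly delicate step is checking carefully that the source and target of $\psi_R$ really do match the claimed Hom/tensor expressions involving $F^{(0)}_* R$ and $F^{(1)}_* R$, rather than being presented only as $R$; from the viewpoint of Proposition \ref{prop-fg-iff-root}, the upshot is that $M_0 := R$ itself qualifies as a root of $R$.
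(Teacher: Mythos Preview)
Your proposal is correct and follows essentially the same approach as the paper: the paper also dismisses finite generation as clear and reduces the unit property to the evaluation map $\Hom_R(F^{(0)}_* R, F^{(1)}_* R) \otimes_{\cD^{(0)}} F^{(0)}_* R \to F^{(1)}_* R$, checks it after localization and completion, and invokes Lemma~\ref{lemma-compo-isom} with exactly $M_1 = R$, $M_2 = F^{(0)}_* R$, $M_3 = F^{(1)}_* R$. Your write-up is simply more explicit about why $1$ generates $R$ over $\cD^{(0)}$ and about the identifications $R = F^{(0)}_* R = F^{(1)}_* R$ as $\cD^{(0)}$-modules, and you correctly single out case~(i) of the lemma (case~(ii) would require $R$ to be a summand of a power of $F^{(0)}_* R$, which is not guaranteed).
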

\begin{proof}
	The finite generation is clear. For unit, we need to show that the natural map
	$$\Hom_R(F^{(0)}_* R, F^{(1)}_* R) \otimes_{\cD^{(0)}} F^{(0)}_* R \to F^{(1)}_* R$$
	given by $[\phi \otimes x \mapsto \phi(x)]$ is an isomorphism. This can be checked after localization and completion, and hence we may assume that $R$ is local and that all the $R$-modules $F^{(e)}_* R$ have the same summands. The claim then follows from Lemma \ref{lemma-compo-isom} by setting $M_1 = R, M_2 = F^{(0)}_* R$ and $M_3 = F^{(1)}_* R$.
\end{proof}

We now discuss a couple of results on the behavior of $\cD^{(0)} \la \Phi \ra$-modules under localization. We remark that, if $M$ is a left $\cD^{(0)} \la \Phi \ra$-module and $W \sq R$ is a multiplicative subset, we will still consider $W^{-1} M$ as a $\cD^{(0)} \la \Phi \ra$-module on $R$, and not on $W^{-1} R$. In particular, the preservation of finiteness properties under localization is not obvious.

\begin{proposition} \label{prop-localization-fg}
	Let $R$ be a ring with {\upshape CL-FFRT}, and fix notation as in Setup \ref{setup-FFRT}. Let $M$ be a left $\cD^{(0)} \la \Phi \ra$-module and $W \sq R$ be a multiplicative subset. Then there is a unique left $\cD^{(0)} \la \Phi \ra$-module structure on $W^{-1} M$ for which the localization map $M \to W^{-1} M$ is $\cD^{(0)} \la \Phi \ra$-linear. Moreover, if $M$ is unit then so is $W^{-1} M$. 
\end{proposition}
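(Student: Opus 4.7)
The plan is to reduce to Corollary~\ref{cor-phi-mod-equiv}: giving a left $\cD^{(0)}\la\Phi\ra$-module structure on $W^{-1}M$ is the same as giving a left $\cD^{(0)}$-module structure together with a $\cD^{(0)}$-linear map $\psi_{W^{-1}M}\colon \Phi^*(W^{-1}M) \to W^{-1}M$. So I would produce these two pieces of data and argue both are forced by the requirement that $M \to W^{-1}M$ be $\cD^{(0)}\la\Phi\ra$-linear.

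For the $\cD^{(0)}$-structure: every element of $\cD^{(0)} = \End_R(F^{(0)}_*R) = \End_{R^{p^a}}(R)$ is by definition $R^{p^a}$-linear on $M$. Since every $w \in W$ becomes invertible in a module as soon as $w^{p^a}$ does, I would identify $W^{-1}M = (W^{p^a})^{-1}M$ as abelian groups. Because $R^{p^a}$ lies in the center of $\cD^{(0)}$, so does $W^{p^a}$, and the usual localization formula $\phi(m/w) := \phi(m\cdot w^{p^a - 1})/w^{p^a}$ extends each operator $\phi \in \cD^{(0)}$ uniquely to $W^{-1}M$ in a way that makes $M \to W^{-1}M$ a $\cD^{(0)}$-module map. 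Uniqueness comes from the fact that $R^{p^a}$-linearity forces this formula.

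For the structure map, I would observe that $\Phi^*$ commutes with localization at $W$: using again that $W^{p^a}$ lies in the center of $\cD^{(0)}$, the natural comparison map
\[
  W^{-1}\Phi^*(M) = W^{-1}\bigl(\Hom_R(F^{(0)}_*R, F^{(1)}_*R) \otimes_{\cD^{(0)}} M \bigr) \longrightarrow \Hom_R(F^{(0)}_*R, F^{(1)}_*R) \otimes_{\cD^{(0)}} W^{-1}M = \Phi^*(W^{-1}M)
\]
is an isomorphism of left $\cD^{(0)}$-modules. I would then define $\psi_{W^{-1}M}$ as the localization $W^{-1}\psi_M$ of the structure map of $M$, transported along this isomorphism. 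Uniqueness is automatic: any candidate $\psi_{W^{-1}M}$ for which $M \to W^{-1}M$ is $\cD^{(0)}\la\Phi\ra$-linear must agree with $W^{-1}\psi_M$ on the image of $\Phi^*(M) \to \Phi^*(W^{-1}M)$, and $R^{p^a}$-linearity then pins down its values on all elements $x/w$ of the localization.

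The unit part is then immediate: if $\psi_M$ is an isomorphism, so is its localization $W^{-1}\psi_M$ (localization is exact), and the identification $W^{-1}\Phi^*(M) \cong \Phi^*(W^{-1}M)$ shows that $\psi_{W^{-1}M}$ is an isomorphism, i.e.\ $W^{-1}M$ is unit. I expect the only non-formal point to be the identification $\Phi^*(W^{-1}M) \cong W^{-1}\Phi^*(M)$, which requires the centrality observation about $W^{p^a}$ inside the noncommutative ring $\cD^{(0)}$; once that is in place, the proof is essentially bookkeeping through Corollary~\ref{cor-phi-mod-equiv}.
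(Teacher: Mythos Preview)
Your proposal is correct and is essentially the paper's argument, just packaged through Corollary~\ref{cor-phi-mod-equiv} rather than handled all at once. The paper applies $(-)\otimes_R W^{-1}R$ directly to the action map $\cD^{(0)}\la\Phi\ra \otimes_R M \to M$ and writes down the explicit formula $\alpha \cdot \tfrac{u}{w} = \tfrac{\alpha \cdot (u w^{p^a-1})}{w^{p^{a+eb}}}$ for $\alpha \in \Hom_R(F^{(0)}_*R, F^{(e)}_*R)$, which encodes exactly your centrality observation about $W^{p^a}$; the unit claim in both cases is just that localizing the isomorphism $\psi_M$ gives an isomorphism.
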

\begin{proof}
	The $\cD^{(0)} \la \Phi \ra$-module structure on $M$ is gives a map $\cD^{(0)} \la \Phi \ra \otimes_R M \to M$, and applying the functor $(-) \otimes_R W^{-1} R$ to this map gives the structure map $\cD^{(0)} \la \Phi \ra \otimes_R W^{-1} M \to W^{-1} M$. 

	This action can be described more explicitly: given $u \in M$, $w \in W$ and $\alpha \in \Hom_R(F^{(0)}_* R, F^{(e)}_* R) = \Hom_R(F^a_* R, F^{a + eb}_* R)$ we get
	$$\alpha \cdot \frac{u}{w} = \alpha \cdot \frac{u w^{p^a - 1}}{w^{p^a}} = \frac{\alpha \cdot u w^{p^a - 1}}{w^{p^{a + eb}}}.$$
	From this computation the uniqueness of the structure is clear. 

	Finally, recall that $M$ is unit precisely when the natural map $\Hom_R(F^{(0)}_* R, F^{(1)}_* R) \otimes_{\cD^{(0)}} M \to M$ is an isomorphism. Since the corresponding map for $W^{-1} M$ is the localization of this isomorphism, it is an isomorphism as well. 
\end{proof}

\begin{proposition} \label{prop-localization-single-elt-unit}
	Let $R$ be a ring with {\upshape CL-FFRT}, and fix notation as in Setup \ref{setup-FFRT}. Let $M$ be a finitely generated left $\cD^{(0)} \la \Phi \ra$-module and let $f \in R$ be an element. Then the $\cD^{(0)} \la \Phi \ra$-module $M[1/f]$ is also finitely generated. 
\end{proposition}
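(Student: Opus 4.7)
The plan is to exhibit a root for $M[1/f]$ using Proposition~\ref{prop-fg-iff-root}. For that proposition to apply I tacitly assume $M$ is also unit (the intended hypothesis here; compare Prop.~\ref{prop-localization-fg} and Cor.~\ref{cor-fgu-abelian}); then $M[1/f]$ is automatically unit. Fix a root $M_0 \sq M$, and set $N_0 := \{m/f : m \in M_0\} \sq M[1/f]$. I claim that $N_0$ is a root of $M[1/f]$.

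The key calculation is that for any $\alpha \in \Hom_R(F^{(0)}_* R, F^{(e)}_* R)$, any $u \in M$ and any $w \in R$ in the multiplicative set,
\begin{equation*}
\alpha \cdot (u/w) = \alpha(u)/w^{p^{eb}} \quad \text{in } M[1/f].
\end{equation*}
Indeed, the formula in the proof of Proposition~\ref{prop-localization-fg} gives $\alpha \cdot (u/w) = \alpha(w^{p^a - 1} u)/w^{p^{a+eb}}$ (where, as in the paper, $w^{p^a - 1} u$ stands for the action of $w^{p^a - 1} \in R \sq \cD^{(0)}$ on $u$). The $R$-linearity of $\alpha$ as a map $F^a_* R \to F^{a + eb}_* R$ encodes the commutation $\alpha \cdot r = r^{p^{eb}} \cdot \alpha$ inside $\cD^{(0)}\la\Phi\ra$ for all $r \in R$, so that $\alpha(w^{p^a - 1} u) = w^{(p^a - 1)p^{eb}} \alpha(u) = w^{p^{a + eb} - p^{eb}} \alpha(u)$; a $p^{a+eb} - p^{eb}$ power of $w$ can then be cancelled against the denominator to leave $\alpha(u)/w^{p^{eb}}$. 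The case $e = 0$ of the identity says $\cD^{(0)}$ acts on $M[1/f]$ preserving denominators (essentially because $R$ is central in $\cD^{(0)}$), so the map $m \mapsto m/f$ is $\cD^{(0)}$-linear on $M_0$, making $N_0$ a finitely generated $\cD^{(0)}$-submodule of $M[1/f]$.

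Iterating the identity identifies $\Phi^{e*}(N_0)$, as a submodule of $M[1/f]$, with $\{v/f^{p^{eb}} : v \in \Phi^{e*}(M_0)\}$. The containment $N_0 \sq \Phi^*(N_0)$ follows from $m/f = (f^{p^b - 1} m)/f^{p^b}$ together with $M_0 \sq \Phi^*(M_0)$ (the root property). For $\bigcup_e \Phi^{e*}(N_0) = M[1/f]$: given $v/f^k \in M[1/f]$, choose $e$ large enough that both $v \in \Phi^{e*}(M_0)$ and $p^{eb} \geq k$, and rewrite $v/f^k = (f^{p^{eb} - k} v)/f^{p^{eb}}$, which visibly lies in $\Phi^{e*}(N_0)$ under the identification above. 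Thus $N_0$ is a root of $M[1/f]$, and Proposition~\ref{prop-fg-iff-root} concludes that $M[1/f]$ is finitely generated. The main obstacle is the key identity $\alpha \cdot (u/w) = \alpha(u)/w^{p^{eb}}$, which requires carefully tracking the twisted $R$-module structure on $F^{(0)}_* R = F^a_* R$ (whereby $r$ acts as multiplication by $r^{p^a}$) together with the commutation relation in $\cD^{(0)}\la\Phi\ra$; once in hand, the argument proceeds in direct analogy with Lyubeznik's classical proof in the regular case, where $(1/f)M_0$ is taken as a root of $M[1/f]$.
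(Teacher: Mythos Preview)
Your key identity $\alpha \cdot (u/w) = \alpha(u)/w^{p^{eb}}$ is false, and the error is in the commutation relation you invoke. You claim that $R$-linearity of $\alpha: F^a_* R \to F^{a+eb}_* R$ gives $\alpha \cdot r = r^{p^{eb}} \cdot \alpha$ for all $r \in R$, but $R$-linearity only says $\alpha(x^{p^a} u) = x^{p^{a+eb}} \alpha(u)$, i.e.\ the commutation holds precisely when $r$ is a $p^a$-th power. For general $r$ it fails: take $R = k[x]$ with $p = 2$, $a = b = 1$, and let $\alpha: F_*R \to F^2_*R$ be the $R$-linear map with $\alpha(1) = 1$, $\alpha(x) = 0$; then $\alpha(x \cdot 1) = 0$ while $x^{p^b}\alpha(1) = x^2 \neq 0$. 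Relatedly, your parenthetical ``essentially because $R$ is central in $\cD^{(0)}$'' is incorrect: only $R^{p^a}$ is central in $\cD^{(0)} = \End_R(F^a_*R)$, not all of $R$.

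This breaks the argument at two points. First, $N_0 = \{m/f : m \in M_0\}$ need not be a $\cD^{(0)}$-submodule of $M[1/f]$: for $\delta \in \cD^{(0)}$ one has $\delta \cdot (m/f) = \delta(f^{p^a-1}m)/f^{p^a}$, and there is no reason for $\delta(f^{p^a-1}m)$ to lie in $f^{p^a-1}M_0$. Second, your identification of $\Phi^{e*}(N_0)$ with $\{v/f^{p^{eb}} : v \in \Phi^{e*}(M_0)\}$ relies on the same false commutation. The paper's fix is exactly to absorb the exponent $p^a$: take $N_0 := f^{-p^a}M_0 = F^{(0)}(f^{-1})M_0$ instead. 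Since $f^{p^a}$ \emph{is} central in $\cD^{(0)}$, this $N_0$ is a $\cD^{(0)}$-submodule, and one gets the clean formula $\Phi^{e*}(N_0) = F^{(e)}(f^{-1})\Phi^{e*}(M_0) = f^{-p^{a+eb}}\Phi^{e*}(M_0)$, from which the root properties follow immediately. Your overall strategy is the right one; the denominator just needs to be $f^{p^a}$ rather than $f$.
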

\begin{proof}
	Recall that being finitely generated is equivalent to the existence of a root (Proposition \ref{prop-fg-iff-root}). Fix a root $M_0$ for $M$, and we claim that $F^{(0)}(f^{-1})M_0 = f^{-p^a} M_0 \sq M[1/f]$ is a root for $M[1/f]$, where $a$ is as in Setup \ref{setup-FFRT}. First of all, we have 
	$$\Phi^*( F^{(0)}(f^{-1}) M_0) = F^{(1)}(f^{-1}) \Phi^*(M_0) \supseteq F^{(1)}(f^{-1}) M_0 \supseteq F^{(0)}(f^{-1}) M_0.$$
	Secondly, observe that 
	\[\bigcup_{e = 0}^\infty \Phi^{e*}(F^{(0)} (f^{-1}) M_0) = \bigcup_{e = 0}^\infty F^{(e)}(f^{-1}) \Phi^{e*}(M_0) = M[1/f]. \qedhere \]
\end{proof}

By using the \v{C}ech complex, the results from the previous sections tell us that the finiteness properties of localizations transfer over to local cohomology.

\begin{proposition} \label{prop-localCoh-F-mod}
	Let $R$ be a ring with {\upshape CL-FFRT} and fix notation as in Setup \ref{setup-FFRT}. Let $M$ be a $\cD^{(0)} \la \Phi \ra$-module, $I \sq R$ be an ideal and $i \geq 0$ be an integer. The local cohomology module $H^i_I(M)$ admits a left $\cD^{(0)} \la \Phi \ra$-structure. Moreover, if $M$ is finitely generated unit then so is $H^i_I(M)$.
\end{proposition}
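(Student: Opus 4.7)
The plan is to realize $H^i_I(M)$ via the \v{C}ech complex and transfer the $\cD^{(0)}\la\Phi\ra$-structure, finite generation, and unit property termwise.

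First, fix generators $f_1, \dots, f_r$ of $I$ and consider the \v{C}ech complex
$$C^\bullet(f_1,\dots,f_r;M) \colon \quad 0 \to M \to \bigoplus_{i} M[1/f_i] \to \bigoplus_{i<j} M[1/f_if_j] \to \cdots \to M[1/f_1\cdots f_r] \to 0.$$
By Proposition \ref{prop-localization-fg}, each localization $M[1/f_{i_1}\cdots f_{i_k}]$ carries a unique left $\cD^{(0)} \la \Phi \ra$-module structure making the localization map $M \to M[1/f_{i_1}\cdots f_{i_k}]$ into a $\cD^{(0)}\la\Phi\ra$-linear map. The differentials of the \v{C}ech complex are alternating sums of further such localization maps, and by uniqueness these differentials are also $\cD^{(0)}\la\Phi\ra$-linear. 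Hence $C^\bullet(f_1,\dots,f_r;M)$ is a complex in $\cD^{(0)}\la\Phi\ra\mmod$, and its cohomology $H^i_I(M)$ inherits a $\cD^{(0)}\la\Phi\ra$-module structure. (One can check independence of the choice of generators in the usual way, either by the standard comparison of \v{C}ech complexes for different generating sets or by remarking that the resulting $\cD^{(0)}\la\Phi\ra$-structure refines the $R$-module structure, and the $R$-module structure on $H^i_I(M)$ is independent of these choices.)

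Now suppose $M$ is finitely generated unit. I would argue that each term of $C^\bullet(f_1,\dots,f_r;M)$ is finitely generated unit: the unit property is preserved under localization by Proposition \ref{prop-localization-fg}, and finite generation is preserved under inverting a single element by Proposition \ref{prop-localization-single-elt-unit} (which applies since each Čech term is $M[1/g]$ for $g = f_{i_1}\cdots f_{i_k}$). Thus $C^\bullet$ is a bounded complex in the category $\cD^{(0)}\la\Phi\ra\mfgumod$.

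Finally, by Corollary \ref{cor-fgu-abelian}, $\cD^{(0)}\la\Phi\ra\mfgumod$ is a full abelian subcategory of $\cD^{(0)}\la\Phi\ra\mmod$ closed under kernels, cokernels, and extensions. In particular, the cohomology of a bounded complex of finitely generated unit modules is finitely generated unit, so $H^i_I(M) \in \cD^{(0)}\la\Phi\ra\mfgumod$. The only real subtlety is checking the commutation of $\cD^{(0)}\la\Phi\ra$-actions with the \v{C}ech differentials, but this reduces to the uniqueness clause of Proposition \ref{prop-localization-fg}, so there is no serious obstacle.
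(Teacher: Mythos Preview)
Your proof is correct and follows essentially the same approach as the paper: compute $H^i_I(M)$ via the \v{C}ech complex on a chosen set of generators, use Propositions \ref{prop-localization-fg} and \ref{prop-localization-single-elt-unit} to see that each term is a (finitely generated unit) left $\cD^{(0)}\la\Phi\ra$-module with $\cD^{(0)}\la\Phi\ra$-linear differentials, and then invoke Corollary \ref{cor-fgu-abelian}. Your remark on independence of the choice of generators is handled in the paper by a separate remark after the proof rather than inside it.
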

\begin{proof}
	Fix generators $I = (f_1, \dots , f_s)$ for the ideal $I$, and consider the \v{C}ech complex $\check C^\bullet( \ul f; M)$ on these generators. Recall that all terms in this complex are direct sums of localizations of $M$ at single elements, and that the differentials are (up to a sign) given by natural localization maps.

	By Proposition \ref{prop-localization-fg}, $\check C^\bullet( \ul f; M)$ is a complex of left $\cD^{(0)} \la \Phi \ra$-modules, and since $H^i_I(M)$ is the $i$-th cohomology of this complex, it follows that $H^i_I(M)$ is also a left $\cD^{(0)} \la \Phi \ra$-module.

	For the last claim, suppose that $M$ is finitely generated unit. By Propositions \ref{prop-localization-fg} and \ref{prop-localization-single-elt-unit} all terms of $\check C^\bullet( \ul f; M)$ are also finitely generated unit. It follows from Corollary \ref{cor-fgu-abelian} that its cohomology modules are also finitely generated unit.
\end{proof}

\begin{corollary} \label{cor-iterated-localCoh-F-mod}
	If $I_1, \dots , I_r \sq R$ are ideals and $i_1, \dots , i_r \geq 0$ are integers, then $H^{i_1}_{I_1} \circ \cdots \circ H^{i_r}_{I_r}(R)$ is a finitely generated unit left $\cD^{(0)} \la \Phi \ra$-module.
\end{corollary}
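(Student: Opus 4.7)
The proof is a straightforward induction on $r$, using Proposition \ref{prop-R-is-fgu} as the base case and Proposition \ref{prop-localCoh-F-mod} as the inductive step.

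More precisely, my plan is as follows. For $r = 0$ the statement reduces to asserting that $R$ itself is a finitely generated unit left $\cD^{(0)} \la \Phi \ra$-module, which is exactly Proposition \ref{prop-R-is-fgu}. For the inductive step, suppose that for some $r \geq 1$ the iterated local cohomology module
\[ M := H^{i_2}_{I_2} \circ \cdots \circ H^{i_r}_{I_r}(R) \]
is a finitely generated unit left $\cD^{(0)} \la \Phi \ra$-module. Then applying Proposition \ref{prop-localCoh-F-mod} to $M$ with the ideal $I_1$ and integer $i_1$ yields that
\[ H^{i_1}_{I_1}(M) = H^{i_1}_{I_1} \circ H^{i_2}_{I_2} \circ \cdots \circ H^{i_r}_{I_r}(R) \]
is again finitely generated unit, which completes the induction.

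There is no real obstacle here: all of the work has been absorbed into Propositions \ref{prop-R-is-fgu}, \ref{prop-localization-fg}, \ref{prop-localization-single-elt-unit}, and \ref{prop-localCoh-F-mod} (the last of which in turn relies on Corollary \ref{cor-fgu-abelian} to pass from the \v{C}ech complex to its cohomology). The only minor point to flag is the usual convention that the composition $H^{i_1}_{I_1} \circ \cdots \circ H^{i_r}_{I_r}(R)$ is built from the inside out, so one should formally set up the induction by peeling off the outermost functor $H^{i_1}_{I_1}$ at each stage, as done above.
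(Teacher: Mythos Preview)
Your proposal is correct and matches the paper's intent: the corollary is stated without proof immediately after Proposition~\ref{prop-localCoh-F-mod}, the understanding being exactly the induction on $r$ you describe, with Proposition~\ref{prop-R-is-fgu} as the base case. There is nothing to add.
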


\begin{remark} [cf. {\cite[Ex. 2.1(iii)]{Lyu93}}]
	Note that the proof of Proposition \ref{prop-localCoh-F-mod} makes crucial use of the \v{C}ech complex, and it is not a priori clear that the $\cD^{(0)}\la \Phi \ra$-module structure of $H^i_I(M)$ is independent of the choice of generators for $I$. An alternative construction is given as follows: let $X := \Spec(R)$, let $\Z_X \mmod$ be the category of abelian sheaves on $X$ and $\Z\mmod$ be the category of abelian groups. Given an $R$-module $M$ we denote by $M^\sim$ the associated quasicoherent sheaf on $X$. If $M$ is a left $\cD^{(0)} \la \Phi \ra$-module and $G : \Z_X\mmod \to \Z\mmod$ is a covariant additive functor then we claim that $G(M^\sim)$ has a natural $\cD^{(0)} \la \Phi \ra$-module structure. Indeed, as observed above $\cD^{(0)} \la \Phi \ra$ acts compatibly on every localization of $M$, and hence on $M^\sim$. In other words, there is a ring homomorphism  $\cD^{(0)} \la \Phi \ra \to \End_{\Z_X}(M^\sim)$. Postcomposing with the natural map $\End_{\Z_X}(M^\sim) \to \End_\Z(G(M^{\sim}))$ we obtain a ring homomorphism $\cD^{(0)} \la \Phi \ra \to \End_\Z(G(M^\sim))$ and hence a $\cD^{(0)} \la \Phi \ra$-module action on $G(M^\sim)$. 
\end{remark}

We are now ready to prove the finiteness of associated primes of iterated local cohomology of $R$. More generally, we show the following:

\begin{theorem}
	Let $R$ be a ring with {\upshape CL-FFRT}, and fix notation as in Setup \ref{setup-FFRT}. Let $M$ be a finitely generated unit $\cD^{(0)} \la \Phi \ra$-module. Then, when viewed as an $R$-module, $M$ has a finite number of associated primes.
\end{theorem}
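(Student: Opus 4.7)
The plan is to follow Lyubeznik's strategy from the regular case, with the functor $F^*$ replaced by $\Phi^*$. By Proposition \ref{prop-fg-iff-root}, $M$ admits a root $M_0$, which is a finitely generated $\cD^{(0)}$-submodule satisfying $M = \bigcup_{e = 0}^\infty \Phi^{e*}(M_0)$. Since $\cD^{(0)}$ is finitely generated as an $R$-module (Proposition \ref{prop-De-noetherian}), $M_0$ is itself finitely generated over $R$, so $\Ass_R(M_0)$ is finite. Using the general fact that associated primes behave well under increasing unions, we have $\Ass_R(M) = \bigcup_{e \geq 0} \Ass_R(\Phi^{e*}(M_0))$, so it suffices to show $\Ass_R(\Phi^{e*}(M_0)) \sq \Ass_R(M_0)$ for every $e \geq 0$.

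The heart of the argument is the following claim in the complete local case: if $(R, \fm)$ is complete local with FFRT, then $\Phi^{e*}(M_0)$ is a direct summand of $M_0^{\oplus n}$ as an $R$-module for some integer $n$. Indeed, the equality $\cJ(F^{(0)}_* R) = \cJ(F^{(e)}_* R)$ from Setup \ref{setup-FFRT} says that every indecomposable summand of $F^{(e)}_* R$ already appears in $F^{(0)}_* R$, so by taking $n$ large enough $F^{(e)}_* R$ is an $R$-module summand of $(F^{(0)}_* R)^{\oplus n}$. Applying $\Hom_R(F^{(0)}_* R, -)$ exhibits $\Hom_R(F^{(0)}_* R, F^{(e)}_* R)$ as a direct summand of $(\cD^{(0)})^{\oplus n}$ as a right $\cD^{(0)}$-module, and tensoring with $M_0$ over $\cD^{(0)}$ realizes $\Phi^{e*}(M_0)$ as an $R$-module summand of $M_0^{\oplus n}$. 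The inclusion $\Ass_R(\Phi^{e*}(M_0)) \sq \Ass_R(M_0)$ then follows immediately.

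To pass from the complete local case to the general CL-FFRT case, I would use that $\Phi^{e*}(M_0)$ is finitely generated over $R$, being a quotient of $\Hom_R(F^{(0)}_* R, F^{(e)}_* R) \otimes_R M_0$, which is finitely generated over the noetherian ring $R$. For finitely generated $R$-modules $N$, faithful flatness of $R_\fp \to \widehat{R_\fp}$ yields $\fp \in \Ass_R(N) \iff \fp \widehat{R_\fp} \in \Ass_{\widehat{R_\fp}}(\widehat{N_\fp})$, so the question of associated primes can be checked one prime at a time after localization and completion. Combined with the commutation of completion with the Hom-tensor construction defining $\Phi^{e*}$, this reduces the inclusion $\Ass_R(\Phi^{e*}(M_0)) \sq \Ass_R(M_0)$ to the complete local claim applied to $\widehat{R_\fp}$, which has FFRT by hypothesis. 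The main technical subtlety is verifying that $\widehat{\Phi^{e*}(M_0)_\fp}$ agrees with the analogous construction carried out intrinsically over $\widehat{R_\fp}$; this rests on the finite generation of $F^{(0)}_* R$, $F^{(e)}_* R$, $\cD^{(0)}$, and $M_0$ as $R$-modules, and should be a routine if slightly fiddly computation.
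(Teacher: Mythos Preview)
Your central claim---that in the complete local case $\Phi^{e*}(M_0)$ is an $R$-module direct summand of $M_0^{\oplus n}$---is false, and the argument you give for it does not work. The splitting $\Hom_R(F^{(0)}_* R, F^{(e)}_* R) \hookrightarrow (\cD^{(0)})^{\oplus n}$ obtained by applying $\Hom_R(F^{(0)}_* R, -)$ to an $R$-linear inclusion $\iota: F^{(e)}_* R \hookrightarrow (F^{(0)}_* R)^{\oplus n}$ is indeed a splitting of \emph{right} $\cD^{(0)}$-modules, but it is \emph{not} compatible with the left $R$-action that induces the $R$-module structure on $\Phi^{e*}(M_0)$. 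That left action is post-composition with $[u \mapsto ru]$ (multiplication on the underlying set), so for $\iota_*$ to respect it you would need $\iota(rv) = r\,\iota(v)$ on underlying sets; but $\iota$ is only $R$-linear for the twisted module structures on $F^{(e)}_* R$ and $F^{(0)}_* R$, which is a genuinely different condition. After tensoring with $M_0$ you therefore only obtain a direct-sum decomposition of abelian groups, not of $R$-modules.

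A concrete counterexample already appears in the regular case ($a = 0$, $b = 1$): take $R = k[x,y]$, $M = H^2_{(x,y)}(R)$, and the root $M_0 = R \cdot \tfrac{1}{xy} \cong R/(x,y)$. Then $\Phi^*(M_0) \cong R/(x^p, y^p)$, which cannot be a summand of any power of $R/(x,y)$, since $(x,y)$ does not annihilate it. In the regular setting one nonetheless has $\Ass_R(F^{e*} M_0) = \Ass_R(M_0)$, but this comes from \emph{flatness of Frobenius}, not from any direct-summand statement; no analogue of flatness is available for $\Phi^*$ over a general CL-FFRT ring, and it is unclear whether the desired inclusion $\Ass_R(\Phi^{e*} M_0) \subseteq \Ass_R(M_0)$ even holds in that generality.

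The paper's proof avoids comparing associated primes across $\Phi^{e*}$ altogether. Instead it uses the noetherianity of the category $\cD^{(0)}\la\Phi\ra\mfgumod$ (Corollary~\ref{cor-fgu-abelian}) to build a finite ascending chain $0 = N_0 \subsetneq N_1 \subsetneq \cdots \subsetneq N_n = M$ of finitely generated unit submodules, where each quotient $N_{i+1}/N_i = H^0_{\fp_{i+1}}(M/N_i)$ has the single associated prime $\fp_{i+1}$ chosen maximal in $\Ass_R(M/N_i)$; then $\Ass_R(M) \subseteq \{\fp_1, \dots, \fp_n\}$.
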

\begin{proof}
	We claim there is a finite filtration
	$$ 0 = M_0 \sq M_1 \sq \cdots \sq M_n = M$$
	by $\cD^{(0)} \la \Phi \ra$-submodules such that each composition factor $M_i / M_{i - 1}$ has a single associated prime $\fp_i \sq R$ when viewed as an $R$-module. Note that from this claim it follows that $\Ass_R(M) \sq \{\fp_1, \dots , \fp_n\}$, which proves the theorem. 

	To build this chain, pick $M_0 = 0$. Given $i \geq 0$, if $M = M_i$ then the chain stops, and otherwise pick a prime $\fp_{i+1} \sq R$ which is maximal among the associated primes of $M / M_i$. It follows that $\fp_{i + 1}$ is the unique associated prime of the $\cD^{(0)} \la \Phi \ra$-submodule $H^0_{\fp_{i + 1}} (M / M_i) \sq M / M_i$. We define $M_{i + 1} \sq M$ to be the unique $\cD^{(0)} \la \Phi \ra$-submodule containing $M_i$ for which
	$$H^0_{\fp_{i + 1}} (M / M_i) = M_{i + 1} / M_i \sq M / M_i.$$
	This process produces an increasing chain of $\cD^{(0)} \la \Phi \ra$-submodules of $M$. Since $M$ is Noetherian (Corollary \ref{cor-fgu-abelian}), this chain must eventually stabilize to $M$, and hence the process stops after a finite number of steps.
\end{proof}

\begin{corollary} \label{cor-iterated-finite-ass}
	Let $R$ be a ring with {\upshape CL-FFRT}, let $I_1, \dots , I_r \sq R$ be ideals and $i_1, \dots , i_r \geq 0$ be integers. The iterated local cohomology module $H^{i_1}_{I_1} \circ \cdots \circ H^{i_r}_{I_r}(R)$ has a finite number of associated primes. 
\end{corollary}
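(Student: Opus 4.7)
The plan is to combine the two main results that immediately precede the corollary. By Proposition~\ref{prop-R-is-fgu}, the ring $R$ itself is a finitely generated unit left $\cD^{(0)}\la\Phi\ra$-module. Applying Proposition~\ref{prop-localCoh-F-mod} once shows that $H^{i_r}_{I_r}(R)$ is again a finitely generated unit left $\cD^{(0)}\la\Phi\ra$-module. Iterating this observation (as already packaged in Corollary~\ref{cor-iterated-localCoh-F-mod}) yields that $H^{i_1}_{I_1} \circ \cdots \circ H^{i_r}_{I_r}(R)$ is a finitely generated unit left $\cD^{(0)}\la\Phi\ra$-module.

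Now I would invoke the preceding theorem, which asserts that every finitely generated unit $\cD^{(0)}\la\Phi\ra$-module has a finite set of associated primes when viewed as an $R$-module. Applying this to the iterated local cohomology module finishes the proof.

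There is no real obstacle: the entire section has been structured precisely so that this corollary becomes a one-line consequence. The substantive content — that iteration of local cohomology stays within the finitely generated unit category, and that such modules have finitely many associated primes — is already done. So the proposal is essentially: cite Corollary~\ref{cor-iterated-localCoh-F-mod} to place the module in $\cD^{(0)}\la\Phi\ra\mfgumod$, then cite the preceding theorem to conclude finiteness of $\Ass_R$.
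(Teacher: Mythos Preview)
Your proposal is correct and matches the paper's approach exactly: the paper states this corollary with no proof, since it is an immediate consequence of Corollary~\ref{cor-iterated-localCoh-F-mod} and the preceding theorem, precisely as you describe.
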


Our next goal is to show that whenever $R$ has CL-FFRT and $f \in R$ is a nonzerodivisor the local cohomology modules of $R / fR$ have closed support. In the regular case this was proved simultaneously and independently by Hochster-N\'{u}\~{n}ez-Betancourt \cite{HNB17} and Katzman-Zhang \cite{KZ18}; our proof closely mimics the former.

Given a commutative ring $R$ and an $R$-module $M$ we denote the support of $M$ by $\Supp_R(M) \sq \Spec(R)$; that is, $\Supp_R(M)$ is the collection of prime ideals $\fp \sq R$ for which $M_\fp \neq 0$. 

\begin{lemma} \label{lemma-fgF-closed-supp}
	Let $R$ be a ring with {\upshape CL-FFRT}, and fix notation as in Setup \ref{setup-FFRT}. Let $M$ be a finitely generated $\cD^{(0)} \la \Phi \ra$-module. Then $M$ has closed support when viewed as an $R$-module.
\end{lemma}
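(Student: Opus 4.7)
The plan is to reduce the closed-support question to the ordinary (finitely-generated $R$-module) case. Fix finitely many $\cD^{(0)}\la \Phi\ra$-generators $m_1,\dots,m_k$ of $M$ and set $N := Rm_1+\cdots+Rm_k$, which is a finitely generated $R$-submodule of $M$. Since $N$ is finitely generated over $R$, its support $\Supp_R(N) = V(\Ann_R N)$ is closed in $\Spec(R)$, so it suffices to prove
$$\Supp_R(M) = \Supp_R(N).$$
The inclusion $\Supp_R(N) \subseteq \Supp_R(M)$ is immediate from $N \hookrightarrow M$.

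For the reverse inclusion, fix $\fp \notin \Supp_R(N)$; we must show $M_\fp = 0$. The tool here is Proposition \ref{prop-localization-fg}: the localization $M_\fp$ carries a unique left $\cD^{(0)}\la\Phi\ra$-module structure making the localization map $\pi\colon M \to M_\fp$ into a $\cD^{(0)}\la\Phi\ra$-linear homomorphism. Since localization is exact and $N \subseteq M$, the map $N_\fp \to M_\fp$ is injective; combined with $N_\fp = 0$ this gives $\pi(m_i) = m_i/1 = 0$ in $M_\fp$ for every $i = 1,\dots,k$.

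Now take an arbitrary element $\eta \in M_\fp$ and write it as $\eta = r/w \cdot \pi(m)$ for some $m \in M$, $r \in R$, $w \in R \setminus \fp$. Writing $m = \sum_i \xi_i m_i$ with $\xi_i \in \cD^{(0)}\la\Phi\ra$, the $\cD^{(0)}\la\Phi\ra$-linearity of $\pi$ gives
$$\pi(m) = \sum_i \xi_i \cdot \pi(m_i) = \sum_i \xi_i \cdot 0 = 0,$$
so $\eta = 0$. Hence $M_\fp = 0$, proving $\fp \notin \Supp_R(M)$.

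This completes the argument: $\Supp_R(M) = \Supp_R(N)$ is a closed subset of $\Spec(R)$. The only potentially delicate step is the compatibility of the $\cD^{(0)}\la\Phi\ra$-action with localization and with the map $\pi$, but this is precisely the content of Proposition \ref{prop-localization-fg}, so no real obstacle arises. The strategy is essentially the standard observation that over a ``nice enough'' ring of operators, the support of a module is controlled by the support of any generating set.
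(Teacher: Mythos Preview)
Your proof is correct and follows essentially the same approach as the paper: both pick $\cD^{(0)}\la\Phi\ra$-generators, let $N$ (the paper calls it $M_0$) be the $R$-submodule they generate, and show $\Supp_R(M)=\Supp_R(N)$. The only cosmetic difference is that the paper phrases the key step as localizing the surjection $\cD^{(0)}\la\Phi\ra \otimes_R N \twoheadrightarrow M$ of $R$-modules, whereas you invoke Proposition~\ref{prop-localization-fg} to push the $\cD^{(0)}\la\Phi\ra$-action through the localization map directly; these are two ways of packaging the same computation.
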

\begin{proof}
	Fix generators $u_1, \dots, u_r \in M$ over $\cD^{(0)} \la \Phi \ra$, and let $M_0$ be the $R$-module they generate. We claim that $M$ and $M_0$ have the same support. Since $M_0 \sq M$ it is clear that the support of $M_0$ is contained in the support of $M$. For the other inclusion observe that, since $M_0$ generates $M$ as a $\cD^{(0)} \la \Phi \ra$-module, the natural map
	$$\cD^{(0)} \la \Phi \ra \otimes_R M_0 \to M$$
	is surjective. It follows that for every prime ideal $\fp \sq R$ the natural map
	$$\cD^{(0)} \la \Phi \ra \otimes_R (M_0)_\fp \to M_\fp$$
	is surjective. In particular, if $(M_0)_\fp = 0$ then $M_\fp = 0$, which proves the claim. 

	We thus have that $\Supp_R(M) = \Supp_R(M_0)$, and since $M_0$ is a finitely generated $R$-module it has closed support.
\end{proof}

\begin{lemma} \label{lemma-nilpt-support}
	Let $R$ be a commutative ring, let $f \in R$ be an element, and let $M$ be an $R$-module on which $f$ is nilpotent. Then $\Supp_R(M) = \Supp_R(M / fM)$. 
\end{lemma}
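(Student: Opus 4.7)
The plan is to prove both inclusions separately. The inclusion $\Supp_R(M/fM) \subseteq \Supp_R(M)$ is immediate from the fact that $M/fM$ is a quotient of $M$: if $M_\fp = 0$ then $(M/fM)_\fp = M_\fp / f M_\fp = 0$ as well.

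For the reverse inclusion $\Supp_R(M) \subseteq \Supp_R(M/fM)$, I would argue by contrapositive. Suppose $\fp \in \Spec(R)$ is a prime with $(M/fM)_\fp = 0$, equivalently $f M_\fp = M_\fp$. The key observation is that since $f$ is nilpotent on $M$, it is also nilpotent on the localization $M_\fp$; say $f^n M_\fp = 0$ for some $n \geq 1$. Iterating the equality $f M_\fp = M_\fp$ then gives
\[
M_\fp = f M_\fp = f^2 M_\fp = \cdots = f^n M_\fp = 0,
\]
so $\fp \notin \Supp_R(M)$. This completes the proof.

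The argument has no real obstacles — it is a direct Nakayama-style manipulation, where the role usually played by an ideal $\fa \subseteq \Jac(R)$ is instead played by a nilpotent endomorphism. No hypothesis on $R$ (such as noetherianity) or on $M$ (such as finite generation) is needed, so the lemma can be used in the sequel without any restriction on $M$.
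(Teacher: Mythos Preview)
Your proof is correct and essentially identical to the paper's own argument: both show the easy inclusion first, then for the other direction use $(M/fM)_\fp = 0 \Rightarrow M_\fp = f M_\fp$ together with nilpotence of $f$ to iterate down to $M_\fp = f^n M_\fp = 0$. The only cosmetic difference is that the paper fixes a single $n$ with $f^n M = 0$ at the outset rather than passing to the localization first.
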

\begin{proof}
	It is clear that $\Supp_R(M / fM) \sq \Supp_R(M)$. For the other inclusion, fix an integer $n \gg 0$ such that $f^n M = 0$, and suppose $\fp \sq R$ is a prime for which $(M / fM)_\fp = 0$. We then get $M_\fp = f M_\fp$, and therefore $M_\fp = f M_\fp = f^2 M_\fp = \cdots = f^n M_\fp = 0$, hence $M_\fp = 0$. 
\end{proof}

\begin{theorem} \label{thm-closed-supp}
	Suppose $R$ has {\upshape CL-FFRT}, and fix notation as in Setup \ref{setup-FFRT}. Let $f \in R$ be a nonzerodivisor, $I \sq R$ be an ideal, and $i \geq 0$ be an integer. Then the local cohomology module $H^i_I(R / fR)$ has closed support.
\end{theorem}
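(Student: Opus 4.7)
The plan is to apply $H^i_I$ to the defining short exact sequence $0 \to R \xrightarrow{\cdot f} R \to R/fR \to 0$ and extract from the resulting long exact sequence the short exact sequence of $R$-modules
$$0 \to H^i_I(R)/fH^i_I(R) \to H^i_I(R/fR) \to (0 :_{H^{i+1}_I(R)} f) \to 0.$$
Since $\Supp_R(H^i_I(R/fR))$ is the union of the supports of the outer terms, it suffices to show both are Zariski closed. By Proposition \ref{prop-localCoh-F-mod}, the modules $M := H^i_I(R)$ and $M' := H^{i+1}_I(R)$ are themselves finitely generated unit left $\cD^{(0)}\la\Phi\ra$-modules, so $\Supp_R M$ and $\Supp_R M'$ are already closed by Lemma \ref{lemma-fgF-closed-supp}; the rest of the argument leverages this fact through the closure properties of $\cD^{(0)}\la\Phi\ra\mfgumod$.

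For the right-hand term, the $f$-nilpotent submodule $\Gamma_f(M') := \ker(M' \to M'[1/f])$ is the kernel of a $\cD^{(0)}\la\Phi\ra$-linear map between finitely generated unit modules (by Propositions \ref{prop-localization-fg} and \ref{prop-localization-single-elt-unit}), hence is itself finitely generated unit by Corollary \ref{cor-fgu-abelian}, and so has closed support by Lemma \ref{lemma-fgF-closed-supp}. Now $(0:_{M'} f) \sq \Gamma_f(M')$ and the two have the same $R$-support: if $\Gamma_f(M')_\fp \neq 0$, pick a nonzero element with minimal annihilating power of $f$ and multiply by one power less to produce a nonzero class in $(0 :_{M'} f)_\fp$.

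For the left-hand term, set $N := \Gamma_f(M)$ and $\b{M} := M/N$; then $N$ is finitely generated unit by the same argument, $\b{M}$ is finitely generated unit by Corollary \ref{cor-fgu-abelian}, and $f$ is a nonzerodivisor on $\b{M}$ by construction. Tensoring $0 \to N \to M \to \b{M} \to 0$ with $R/fR$ and using $\Tor_1^R(R/fR, \b{M}) = \ker(f : \b{M} \to \b{M}) = 0$ yields
$$0 \to N/fN \to M/fM \to \b{M}/f\b{M} \to 0.$$
The first term has support equal to $\Supp_R N$ by Lemma \ref{lemma-nilpt-support}, which is closed. For the last, I claim $\Supp_R(\b{M}/f\b{M}) = \Supp_R(\b{M}[1/f]/\b{M})$: the map $[\b{m} \mapsto \b{m}/f]$ defines an $R$-linear injection $\b{M}/f\b{M} \hookrightarrow \b{M}[1/f]/\b{M}$ (injectivity uses $f$ a nonzerodivisor on $\b{M}$), giving $\sq$; the reverse uses the filtration $\b{M}[1/f]/\b{M} = \bigcup_{n \geq 0} f^{-n}\b{M}/\b{M}$, in which each $f^{-n}\b{M}/\b{M} \cong \b{M}/f^n\b{M}$ admits a finite filtration by powers of $f$ with successive quotients each isomorphic to $\b{M}/f\b{M}$. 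Finally, $\b{M}[1/f]/\b{M}$ is the cokernel of a morphism in $\cD^{(0)}\la\Phi\ra\mfgumod$, hence finitely generated unit by Corollary \ref{cor-fgu-abelian}, hence has closed support by Lemma \ref{lemma-fgF-closed-supp}.

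The principal technical obstacle I anticipate is verifying at each step that the submodules and quotients we form are genuinely finitely generated \emph{and} unit, not merely $\cD^{(0)}$-modules, since Lemma \ref{lemma-fgF-closed-supp} requires membership in $\cD^{(0)}\la\Phi\ra\mfgumod$. This is exactly what the wide-subcategory property from Corollary \ref{cor-fgu-abelian} and the localization compatibility from Propositions \ref{prop-localization-fg} and \ref{prop-localization-single-elt-unit} are designed to supply, and together they reduce every such check to an application of the five-lemma via Proposition \ref{prop-unit-kce-closed}.
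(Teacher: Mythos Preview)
Your argument is correct, but it takes a noticeably longer route than the paper's, and the extra length stems from a misreading: Lemma~\ref{lemma-fgF-closed-supp} only requires the module to be \emph{finitely generated} over $\cD^{(0)}\la\Phi\ra$, not unit. With that in hand, the paper dispatches both outer terms much more quickly. For the right-hand term $Q \subseteq H^{i+1}_I(R)$, the paper simply quotes Corollary~\ref{cor-iterated-finite-ass}: $H^{i+1}_I(R)$ has finitely many associated primes, hence so does any submodule, hence $Q$ has closed support. For the left-hand term $H/fH$, the paper observes that while $fH$ need not be a $\cD^{(0)}\la\Phi\ra$-submodule, $F^{(0)}(f)H = f^{p^a}H$ \emph{is} one (since $\alpha(f^{p^a}x) = f^{p^{a+eb}}\alpha(x) \in f^{p^a}H$ for every $\alpha \in \Hom_R(F^{(0)}_*R, F^{(e)}_*R)$); thus $H/f^{p^a}H$ is a finitely generated $\cD^{(0)}\la\Phi\ra$-module (not unit, but that is irrelevant), has closed support by Lemma~\ref{lemma-fgF-closed-supp}, and Lemma~\ref{lemma-nilpt-support} gives $\Supp_R(H/fH) = \Supp_R(H/f^{p^a}H)$.

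Your approach has the virtue of staying entirely inside $\cD^{(0)}\la\Phi\ra\mfgumod$ and never invoking the finiteness of associated primes, so in a sense it is more self-contained within the $F$-module formalism. The cost is the detour through $\Gamma_f$, the Tor computation, and the identification $\Supp_R(\bar M/f\bar M) = \Supp_R(\bar M[1/f]/\bar M)$, all of which the paper avoids by exploiting the single observation that $f^{p^a}$ (rather than $f$) already acts $\cD^{(0)}\la\Phi\ra$-linearly.
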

\begin{proof}
	The short exact sequence
	$$
	\begin{tikzcd}
		0 \arrow[r] & R \arrow[r, "f"] & R \arrow[r] & R / fR \arrow[r] & 0
	\end{tikzcd}
	$$
	induces a long exact sequence in local cohomology, a portion of which looks like
	$$
	\begin{tikzcd}
		\cdots \arrow[r] &  H^i_I(R) \arrow[r, "f"] & H^i_I(R) \arrow[r] & H^i_I(R / fR) \arrow[r] & H^{i+1}_I(R) \arrow[r] & \cdots 
	\end{tikzcd}
	$$
	This induces a short exact sequence of the form
	$$
	\begin{tikzcd}
		0 \arrow[r] & 	H \big/ fH \arrow[r] & H^i_I(R / fR) \arrow[r] & Q \arrow[r] & 0,
	\end{tikzcd}
	$$
	where $H := H^i_I(R)$ and $Q \sq H^{i+1}_I(R)$ is a submodule.  Since $H^{i+1}_I(R)$ has a finite number of associated primes (Corollary \ref{cor-iterated-finite-ass}), so does $Q$, and therefore $Q$ has closed support. We claim that $H / f H$ has closed support; since $\Supp_R(H^i_I(R / fR)) = \Supp_R(Q) \cup \Supp_R (H / f H )$ this will prove the result. 

	Recall that $H$ is finitely generated over $\cD^{(0)} \la \Phi \ra$ (Corollary \ref{cor-iterated-localCoh-F-mod}). Since $F^{(0)}(f) H = f^{p^a} H$ is a $\cD^{(0)} \la \Phi \ra$-submodule, the quotient $H / f^{p^a} H$ acquires a $\cD^{(0)} \la \Phi \ra$-module structure, and the finite generation of $H$ tells us that $H / f^{p^a} H$ is finitely generated over $\cD^{(0)} \la \Phi \ra$. In particular, $\Supp_R(H / f^{p^a} H)$ is closed by Lemma \ref{lemma-fgF-closed-supp}. But, by Lemma \ref{lemma-nilpt-support}, we have $\Supp_R(H / fH) = \Supp_R(H / f^{p^a} H)$, and hence the claim follows. 
\end{proof}

\newcommand{\etalchar}[1]{$^{#1}$}

\end{document}